\documentclass[12pt,a4paper]{amsart}
\usepackage {xypic, verbatim, amscd, amsmath, amssymb,graphicx, colordvi, color }
\topmargin=-10pt \textheight=626pt     \textwidth=474pt
\oddsidemargin=-3pt   \evensidemargin=-3pt
\usepackage[all,2cell]{xy}
\parskip 0.2cm
\usepackage{epsf}
\usepackage{latexsym}

\newcommand{\frg} {\mathfrak{g}}

\newcommand{\til} {\widetilde}

\newcommand{\vect} {\vec}
\newcommand{\la} {\langle}
\newcommand{\ra} {\rangle}

\newcommand {\geg} {\mathfrak{g}}
\newcommand {\heh} {\mathfrak{h}}

\newcommand {\wgeg} {\widehat{\mathfrak{g}}}

\newcommand {\ZZ} {\mathbb{Z}}

\newcommand {\CC} {\mathbb{C}}

\newtheorem {prop}{Proposition}

\newtheorem {thm}{Theorem}
\newtheorem {Lemma}{Lemma }
\newtheorem {definition}{Definition}

\newtheorem {Rem}{Remark}
\newtheorem {cor}{Corollary}
\newcommand {\Res} {\mathop{\mathrm{Res}}\displaylimits}

\begin{document}
\title[Diagram automorphisms]{Diagram automorphisms and Rank-Level duality }
\author{Swarnava Mukhopadhyay}
\thanks{The author was partially supported by NSF grant  DMS-0901249.}

\address{Department of Mathematics\\ UNC-Chapel Hill\\ CB \#3250, Phillips Hall
\\ Chapel Hill, NC 27599}
\email{swarnava@live.unc.edu}
\subjclass{Primary 17B67, Secondary 32G34, 81T40}
\begin{abstract}
We study the effect of diagram automorphisms on rank-level duality. We use it to prove new symplectic rank-level dualities on genus zero smooth curves with marked points and chosen coordinates. We also show that rank-level dualities for the pair $\mathfrak{sl}(r), \mathfrak{sl}(s)$ in genus $0$ arising from representation theory can also be obtained from the parabolic strange duality proved by R. Oudompheng.
\end{abstract}
\maketitle
\section{Introduction}

Let $\frg$ be a finite dimensional simple Lie algebra, $\ell$ a non-negative integer called the level and $\vec{\lambda}=(\lambda_1,\dots,\lambda_n)$ a $n$-tuple of dominant weights of $\geg$ of level $\ell$. Consider $n$ distinct points $\vec{z}$ on $\mathbb{P}^1$ with coordinates $\xi_1,\dots, \xi_n$ and let $\mathfrak{X}$ denote the corresponding data. One can associate a finite dimensional complex vector space $\mathcal{V}^{\dagger}_{\vec{\lambda}}(\mathfrak{X},\geg,\ell)$ to this data. These spaces are known as conformal blocks. The dual space of conformal blocks is denoted by $\mathcal{V}_{\vec{\lambda}}(\mathfrak{X},\geg,\ell)$. More generally one can define conformal block on curves of arbitrary genus. We refer the reader to ~\cite{TUY} for more details. Rank-level duality is a duality between conformal blocks associated to two different Lie algebras.

A diagram automorphism of a Dynkin diagram is a permutation of its nodes which leaves the diagram invariant. For every digram automorphism, we can construct a finite order automorphism of the Lie algebra. These automorphisms are known as outer automorphisms. In the following, we restrict ourselves to affine Kac-Moody Lie algebras $\wgeg$ (see Section 2) and to those diagram automorphisms which corresponds to the center $Z(G)$ of the simply connected group $G$ associated to a finite dimensional Lie algebra $\geg$.  

In ~\cite{FS}, the conformal blocks $\mathcal{V}^{\dagger}_{\vec{\lambda}}(\mathfrak{X},\geg,\ell)$ and $\mathcal{V}^{\dagger}_{\vec{\omega}\vec{\lambda}}(\mathfrak{X},\geg,\ell)$ are identified via an isomorphism which is flat with respect to the KZ connection, where $\vec{\omega}=(\omega_1, \cdots, \omega_n)\in Z(G)^n $ such that $ \prod_{s=1}^n\omega_s= \text{id}$, $\vect{\omega}\vect{\lambda}=(\omega_1^* \lambda_1, \dots, \omega_n^* \lambda_n)$ and $\omega^*$ is a permutation of level $\ell$ weights $P_{\ell}(\geg)$ associated to a diagram automorphism $\omega$. The main purpose of this article is to understand single and multi-shift automorphisms as ``conjugations" and study the functoriality of the isomorphism in ~\cite{FS} under embeddings of Lie algebras. We refer the reader to Section \ref{conjugation} for more details. 

We briefly recall the construction of the above isomorphism in ~\cite{FS}. Let $P^{\vee}$ and $Q^{\vee}$ denote the coweight and the coroot lattice of $\geg$ respectively and $\Gamma_n^{\geg}=\{(\mu_1,\dots,\mu_n) | \mu_i \in P^{\vee} \text{and} \sum_{i=1}^n\mu_i=0\}$. For $\vec{z}\in \mathbb{P}^1$, a positive integer $s$ and $\vec{\mu} \in \Gamma_n^{\geg}$ a multi-shift automorphism $\sigma_{\vec{\mu},s}(\vec{z})$ of $\wgeg$ is constructed in ~\cite{FS}. Multi-shift automorphisms are generalization of single-shift automorphisms. We refer the reader to Section \ref{conjugation} for more details. 

Multi-shift automorphisms add up to give an automorphism $\sigma_{\vec{\mu}}(\vec{z})$ of the Lie algebra $\wgeg_n=\oplus_{i=1}^n\geg\otimes \mathbb{C}((\xi_i))\oplus \mathbb{C}.c$. To implement the action of $\sigma_{\vec{\mu}}(\vec{z})$ on tensor product of highest weight modules, a map from $\Theta_{\vec{\mu}}(\vec{z}): \otimes_{i=1}^n\mathcal{H}_{\lambda_i} \rightarrow \otimes_{i=1}^n\mathcal{H}_{\omega_i^*\lambda_i}$ is given, where $\mathcal{H}_{\lambda}$ is an integrable irreducible highest weight $\wgeg$-module of highest weight $\lambda$. Since the automorphism $\sigma_{\vec{\mu}}(\vec{z})$ preserves the current algebra $\geg\otimes \Gamma(\mathbb{P}^1- \vec{z},\mathcal{O}_{\mathbb{P}^1})$, the isomorphism of conformal blocks follows by taking coinvariants. Moreover the map $\Theta_{\vec{\mu}}(\vec{z})$ is chosen such that the induced map between the conformal blocks is flat with respect to the KZ connection. We now describe the main results of this paper. 

Let $G$ be a simply connected simple Lie group with Lie algebra $\geg$. We assume that $G$ is classical. Let $\mu \in P^{\vee}$, and consider $\tau_{\mu}= \exp(\ln{\xi}.\mu)$. It is well defined upon a choice of a branch of the complex logarithm but conjugation by $\tau_{\mu}$ is independent of the branch of the chosen logarithm. 

\begin{thm}\label{conjugation}
The map $x\rightarrow \tau_{\mu}x\tau_{\mu}^{-1}$ defines an automorphism $\varphi_{\mu}$ of the loop algebra $\frg\otimes \CC((\xi))$. Further more the automorphism $\varphi_{\mu}$ coincide with the single-shift automorphism $\sigma_{\mu}$ restricted to $\frg\otimes\CC((\xi))$.
\end{thm}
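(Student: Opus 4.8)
The plan is to establish the two assertions separately. For the first, note that although $\tau_\mu=\exp((\ln\xi)\mu)$ is not an element of the loop group $\frg\otimes\CC((\xi))$, conjugation by it is nonetheless meaningful on the loop algebra: it is the operator $\mathrm{Ad}(\tau_\mu)=\exp\bigl((\ln\xi)\,\mathrm{ad}\,\mu\bigr)$, which is $\CC((\xi))$-linear since scalars are central. Since $\mu\in P^\vee\subset\heh$, the operator $\mathrm{ad}\,\mu$ is diagonalizable on $\frg$, acting by $0$ on $\heh$ and by the scalar $\alpha(\mu)$ on each root space $\frg_\alpha$; crucially $\alpha(\mu)\in\ZZ$ because $\mu$ is a coweight and $\alpha$ lies in the root lattice. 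Hence, using the decomposition $\frg=\heh\oplus\bigoplus_\alpha\frg_\alpha$, one computes for $X\in\frg_\alpha$ and $h\in\heh$ that
\[
\varphi_\mu(X\otimes f(\xi))=X\otimes\xi^{\alpha(\mu)}f(\xi),\qquad \varphi_\mu(h\otimes f(\xi))=h\otimes f(\xi),
\]
and in particular $\varphi_\mu$ preserves $\frg\otimes\CC((\xi))$ precisely because each $\xi^{\alpha(\mu)}$ is an integer power of $\xi$. It is a Lie algebra homomorphism since conjugation always is, it is invertible with inverse $\varphi_{-\mu}=\mathrm{Ad}(\tau_{-\mu})$, and it is independent of the branch of $\ln\xi$ because changing the branch replaces $\tau_\mu$ by $\tau_\mu\cdot\exp(2\pi i k\mu)$ for some $k\in\ZZ$, and $\mathrm{Ad}(\exp(2\pi i k\mu))$ acts on $\frg_\alpha$ by $e^{2\pi i k\alpha(\mu)}=1$. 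This establishes that $\varphi_\mu$ is a well-defined automorphism of $\frg\otimes\CC((\xi))$.

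For the second assertion, I would recall from \cite{FS} (cf.\ Section \ref{conjugation}) the explicit description of the single-shift automorphism $\sigma_\mu$ restricted to the loop algebra: it is the shift of the natural $\ZZ$-grading of $\frg\otimes\CC((\xi))$ by the $\mathrm{ad}\,\mu$-weight, i.e.\ $\sigma_\mu(X\otimes\xi^n)=X\otimes\xi^{n+\alpha(\mu)}$ for $X\in\frg_\alpha$ and $\sigma_\mu(h\otimes\xi^n)=h\otimes\xi^n$ for $h\in\heh$; any central or derivation corrections appearing in the definition of $\sigma_\mu$ on all of $\wgeg$ drop out upon restriction to $\frg\otimes\CC((\xi))$. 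Comparing with the formulas displayed above, $\varphi_\mu$ and $\sigma_\mu$ agree on root vectors tensored with monomials and on $\heh\otimes\CC((\xi))$, hence on a spanning set of the loop algebra, so $\varphi_\mu=\sigma_\mu|_{\frg\otimes\CC((\xi))}$.

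The calculation is essentially formal, and the only genuinely delicate points are bookkeeping ones. First, one must take seriously that $\tau_\mu\notin\frg\otimes\CC((\xi))$, so that $\varphi_\mu$ has to be introduced directly as the operator $\exp\bigl((\ln\xi)\,\mathrm{ad}\,\mu\bigr)$ on the loop algebra, with the integrality $\alpha(\mu)\in\ZZ$ being exactly what prevents it from producing fractional or logarithmic powers of $\xi$ (and what makes it branch-independent). Second, one must match sign conventions with the definition of $\sigma_\mu$ in \cite{FS} — whether its grading shift is by $+\alpha(\mu)$ or $-\alpha(\mu)$ — and, if needed, replace $\tau_\mu$ by $\tau_{-\mu}$ so that the two automorphisms literally coincide rather than being inverse to one another. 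I expect this convention-matching, rather than any substantive argument, to be the main thing requiring care.
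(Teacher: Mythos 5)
Your proposal is correct, but it proves the statement by a genuinely different and more uniform route than the paper. The paper reduces, via additivity $\tau_{\mu_1+\mu_2}=\tau_{\mu_1}\tau_{\mu_2}$, to the fundamental coweights, and then argues type by type for the classical algebras: it writes $\tau_{X_i}$ as an explicit diagonal matrix with (generally fractional) powers of $\xi$, expresses the root spaces in terms of the elementary matrices $E_{i,j}$, and checks by direct computation that conjugation sends $E_{a,b}(n)$ to $E_{a,b}(n)$ or $E_{a,b}(n+1)$; the comparison with $\sigma_\mu$ (Lemma~\ref{loopeq}) is likewise left to a ``type dependent'' calculation. You instead work intrinsically: you observe that conjugation by $\tau_\mu$ is $\exp\bigl((\ln\xi)\,\mathrm{ad}\,\mu\bigr)$, that $\mathrm{ad}\,\mu$ acts by $0$ on $\heh$ and by the integer $\alpha(\mu)$ on each root space $\frg_\alpha$ (integrality being exactly the defining property of the coweight lattice), and hence that $\varphi_\mu(X\otimes f)=X\otimes\xi^{\alpha(\mu)}f$ on $\frg_\alpha\otimes\CC((\xi))$; this matches the defining formulas of $\sigma_\mu$ on the nose once the central term is discarded. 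Your version buys type-independence (it does not actually use that $\geg$ is classical, only that $\mathrm{ad}\,\mu$ is semisimple with integral root-space eigenvalues), makes the branch-independence and the absence of fractional powers conceptual rather than computational, and collapses the paper's two separate verifications (well-definedness and agreement with $\sigma_\mu$) into one eigenvalue computation. What the paper's concrete matrix approach buys in return is that $\tau_\mu$ is literally an invertible matrix over a ring of fractional-power series, so ``conjugation'' needs no interpretation. Your one genuine point of care --- matching the sign of the grading shift with the convention $\sigma_{\mu}(X_\alpha(n))=X_\alpha(n+\langle\mu,\alpha\rangle)$ --- works out with no adjustment needed: the conventions already agree.
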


Since the extension $\wgeg$ of $\frg\otimes \CC((\xi))$ is an universal central extension, an immediate corollary of Theorem \ref{conjugation} is the following. We refer the reader to Section \ref{conjugation} for more details:
\begin{cor}
The automorphism $\sigma_{\mu}$ is the unique extension of $\varphi_{\mu}$ to $\wgeg$.
\end{cor}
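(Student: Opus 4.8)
\emph{Proof proposal.} The plan is to deduce this from the functoriality of universal central extensions, i.e.\ the standard lifting lemma for perfect Lie algebras. Write $L=\frg\otimes\CC((\xi))$ and let $\pi\colon\wgeg\twoheadrightarrow L$ be the canonical projection, so that $\ker\pi=\CC c$ is the one dimensional center of $\wgeg$; note also that $\wgeg$ is perfect, $\wgeg=[\wgeg,\wgeg]$, since $\frg$ is simple and the residue cocycle on $L$ is nonzero. By hypothesis $(\wgeg,\pi)$ is the universal central extension of $L$, so for every central extension $p\colon E\twoheadrightarrow L$ there is a unique Lie algebra morphism $\wgeg\to E$ lying over $L$.

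First I would establish uniqueness. Suppose $\Phi,\Phi'\in\operatorname{Aut}(\wgeg)$ both restrict to $\varphi_{\mu}$ on $L$; since $\CC c$ is the center, both preserve it, and $\pi\circ\Phi=\varphi_{\mu}\circ\pi=\pi\circ\Phi'$. Then $\delta:=\Phi-\Phi'$ takes values in $\ker\pi=\CC c$, and using centrality of $\CC c$ one gets, for all $x,y\in\wgeg$,
\[
[\Phi' x,\Phi' y]=[\Phi x-\delta(x),\,\Phi y-\delta(y)]=[\Phi x,\Phi y],
\]
whence $\delta([x,y])=\Phi([x,y])-\Phi'([x,y])=0$. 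As $\wgeg$ is perfect this forces $\delta\equiv 0$, i.e.\ $\Phi=\Phi'$. (Equivalently: any such $\Phi$ is a morphism $\wgeg\to\wgeg$ over the central extension $\varphi_{\mu}^{-1}\circ\pi\colon\wgeg\to L$, and such a morphism is unique by the universal property; that same property, applied to $\varphi_{\mu}$ and to $\varphi_{\mu}^{-1}$ and then composing, reproves that a lift exists and is an automorphism.) Finally, Theorem~\ref{conjugation} together with the construction of $\sigma_{\mu}$ in~\cite{FS} as an automorphism of all of $\wgeg$ shows that $\sigma_{\mu}$ does restrict to $\varphi_{\mu}$ on $L$; hence it is \emph{the} extension, as claimed.

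The step that needs care is foundational rather than a genuine obstacle: because $\CC((\xi))$ is not finitely generated, one must work in the category of complete (topological) Lie algebras, in which — as recalled in the excerpt — $\wgeg$ really is the universal central extension of $L$, with center spanned by the residue class of $\xi^{-1}\,d\xi$. Once this is granted, everything above is formal and uses no computation beyond the displayed bracket identity.
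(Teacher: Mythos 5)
Your argument is correct and follows the same route the paper intends: the paper simply asserts the corollary is immediate from $\wgeg$ being a universal central extension of the loop algebra, and your uniqueness step (the difference of two lifts is central-valued, kills brackets, hence vanishes by perfectness of $\wgeg$) is exactly the standard justification of that assertion, with existence supplied by Theorem~1. No substantive difference from the paper's reasoning.
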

We consider a map $\phi : G_1 \times G_2 \rightarrow G,$ where $G_1, G_2$ and $G$ are simple, simply connected complex Lie groups with Lie algebras $\geg_1$, $\geg_2$ and $\geg$ respectively. Let us also denote the map of the corresponding Lie algebras by $\phi$. We extend $\phi$ to a map $\widehat{\phi}:\wgeg_1\oplus \wgeg_2 \rightarrow \wgeg$.  We prove the following theorem:

\begin{thm}\label{extension1} 
Let $\heh_1$, $\heh_2$ and $\heh$ be Cartan subalgebras of $\geg_1$, $\geg_2$ and $\geg$ such that $\phi(\heh_1\oplus \heh_2) \subset \heh$.
Consider $\vect{\mu}=(\mu_1, \cdots, \mu_n)\in(P_1^{\vee})^{n}$ such that $\vect{\tilde{\mu}}=(\phi(\mu_1), \cdots, \phi(\mu_n))\in (P^{\vee})^n$, where $P_1^{\vee}$ and $P^{\vee}$ denote the coweight lattice of $(\geg_1, \heh_1)$ and $(\geg,\heh)$ respectively. Then the following diagram commutes:

$$\xymatrix{
\wgeg_1\oplus \wgeg_2 \ar[r]^{\widehat{\phi}} \ar[d]^{ \sigma_{\vect{\mu},s}\oplus \text{id}} & \wgeg\ar[d]^{\sigma_{\vect{\tilde{{\mu}}},s}}\\
\wgeg_1 \oplus \wgeg_2  \ar[r]^{\widehat{\phi}} & \wgeg.
}
$$
\end{thm}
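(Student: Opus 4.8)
The plan is to reduce the commutativity of the square to the corresponding statement for the restrictions of the two composite maps to the loop subalgebras, and to deduce that from Theorem~\ref{conjugation} (in its multi-shift form) together with functoriality of the exponential. Note first that $\widehat{\phi}\circ(\sigma_{\vect{\mu},s}\oplus\text{id})$ and $\sigma_{\vect{\tilde{\mu}},s}\circ\widehat{\phi}$ are both Lie algebra homomorphisms $\wgeg_1\oplus\wgeg_2\to\wgeg$, being composites of such. Since $c_i\in[\geg_i\otimes\CC((\xi)),\,\geg_i\otimes\CC((\xi))]$ (for instance $c_i$ is a nonzero multiple of $[H\otimes\xi,\,H\otimes\xi^{-1}]$ for any $H\in\heh_i$ with $(H\,|\,H)\neq0$), each $\wgeg_i$ is generated as a Lie algebra by its loop subalgebra $\geg_i\otimes\CC((\xi))$, and hence $\wgeg_1\oplus\wgeg_2$ is generated by $(\geg_1\otimes\CC((\xi)))\oplus(\geg_2\otimes\CC((\xi)))$. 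As two Lie algebra homomorphisms that agree on a generating set agree everywhere, it is enough to check that the two composites coincide on $\geg_1\otimes\CC((\xi))$ and on $\geg_2\otimes\CC((\xi))$ separately; in particular the central correction terms of the shift automorphisms never need to be computed.

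On $\geg_1\otimes\CC((\xi))$: by Theorem~\ref{conjugation} and its multi-shift extension, $\sigma_{\vect{\mu},s}$ acts as conjugation by an element $\tau^{(1)}$ which is the exponential of a $\CC$-linear combination of the coweights $\mu_1,\dots,\mu_n$ tensored with scalar functions of $\xi$ (possibly involving $\ln\xi$) depending only on the point configuration $\vec{z}$; likewise $\sigma_{\vect{\tilde{\mu}},s}$ acts on $\geg\otimes\CC((\xi))$ as conjugation by the element $\tau^{(G)}$ formed from the same scalar functions and the coweights $\tilde{\mu}_j=\phi(\mu_j)$ --- here the hypothesis $\vect{\tilde{\mu}}\in(P^{\vee})^n$ is exactly what guarantees that $\sigma_{\vect{\tilde{\mu}},s}$ preserves $\geg\otimes\CC((\xi))$, i.e.\ is an automorphism of $\wgeg$. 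Because $\phi\colon G_1\times G_2\to G$ is a group homomorphism whose Lie algebra map sends $\mu_j$ to $\tilde{\mu}_j$, the one-parameter subgroups match, $\phi(\exp(t\mu_j))=\exp(t\tilde{\mu}_j)$; since moreover $\widehat{\phi}$ restricted to the loop algebra is $X\otimes f\mapsto\phi(X)\otimes f$, this yields $\widehat{\phi}\circ\mathrm{Ad}(\tau^{(1)})=\mathrm{Ad}(\tau^{(G)})\circ\widehat{\phi}$ on $\geg_1\otimes\CC((\xi))$ (the conjugation actions being independent of the branch of $\ln$), that is,
\[
\widehat{\phi}\bigl(\sigma_{\vect{\mu},s}(x)\bigr)=\widehat{\phi}\bigl(\tau^{(1)}\,x\,(\tau^{(1)})^{-1}\bigr)=\tau^{(G)}\,\widehat{\phi}(x)\,(\tau^{(G)})^{-1}=\sigma_{\vect{\tilde{\mu}},s}\bigl(\widehat{\phi}(x)\bigr)
\]
for all $x\in\geg_1\otimes\CC((\xi))$.

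On $\geg_2\otimes\CC((\xi))$ the left vertical map is the identity, so it remains to show that $\sigma_{\vect{\tilde{\mu}},s}$ fixes $\widehat{\phi}(\geg_2\otimes\CC((\xi)))=\phi(\geg_2)\otimes\CC((\xi))$. Since $\phi$ is a homomorphism out of the \emph{direct product} $G_1\times G_2$, one has $[\phi(\geg_1),\phi(\geg_2)]=0$ in $\geg$; in particular every $\tilde{\mu}_j=\phi(\mu_j)\in\phi(\heh_1)$ centralizes $\phi(\geg_2)$. As $\tau^{(G)}$ is an exponential of a scalar combination of the $\tilde{\mu}_j$, the automorphism $\mathrm{Ad}(\tau^{(G)})$ fixes $\phi(Y)\otimes f$ for every $Y\in\geg_2$, so $\sigma_{\vect{\tilde{\mu}},s}(\widehat{\phi}(y))=\widehat{\phi}(y)=\widehat{\phi}(\text{id}(y))$ for $y\in\geg_2\otimes\CC((\xi))$; together with the previous paragraph this proves that the square commutes. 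The step I expect to be the main obstacle is establishing the explicit conjugation description of the multi-shift automorphism on the loop algebra used above --- constructing $\tau^{(1)}$ and $\tau^{(G)}$ and checking that the scalar functions entering them are intrinsic to $\vec{z}$ rather than to $\geg_i$, so that the relation $\phi(\tau^{(1)})=\tau^{(G)}$ genuinely holds. This is the natural multi-shift generalization of Theorem~\ref{conjugation}; once it is in place, the functoriality argument above is routine.
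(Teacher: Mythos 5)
Your reduction to the loop subalgebra is a reasonable idea, but the step ``in particular the central correction terms of the shift automorphisms never need to be computed'' is where the argument breaks. The multi-shift automorphism does \emph{not} preserve the subspace $\geg\otimes\CC((\xi))\subset\wgeg$: on the Cartan part it is
$\sigma_{\vect{\mu},t}(h\otimes f)=h\otimes f+\bigl(\sum_{s}\la h,\mu_s\ra\Res(\varphi_{t,s}f)\bigr)c$,
whereas $\mathrm{Ad}(\tau)$ fixes $h\otimes f$. The conjugation description (Theorem \ref{conjugation} and its multi-shift analogue) therefore only computes the automorphism \emph{induced on the quotient} $\wgeg/\CC c\simeq\geg\otimes\CC((\xi))$, so your displayed chain of equalities
$\widehat{\phi}(\sigma_{\vect{\mu},s}(x))=\widehat{\phi}(\tau^{(1)}x(\tau^{(1)})^{-1})=\cdots$
is valid only modulo $\CC c$. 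What you have actually proved is that the square commutes after composing with the projection $\wgeg\to\wgeg/\CC c$; the assertion of the theorem is an identity in $\wgeg$ itself. The missing content is precisely the matching of central terms, which is the heart of the paper's proof: for $h\in\heh_1$ one needs $\la\phi_1(\mu),\phi_1(h)\ra_{\geg}=\ell_1\la\mu,h\ra_{\geg_1}$ (the Dynkin index identity), and for $h_2\in\heh_2$ one needs $\la\phi_1(\heh_1),\phi_2(\heh_2)\ra_{\geg}=0$, which the paper proves as a separate lemma; neither is automatic.

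Two further remarks. First, your argument can be repaired without computing the central terms, but only by adding an explicit extra step: both composites are Lie algebra homomorphisms out of $\wgeg_1\oplus\wgeg_2$, their difference is $\CC c$-valued by what you showed, such a central-valued difference annihilates all brackets, and $\wgeg_1\oplus\wgeg_2$ is perfect (indeed $c$ itself is a bracket, as you note); hence the difference vanishes. This is in the spirit of the paper's Corollary on uniqueness of extensions to the universal central extension, but you did not make it, and without it the proof is incomplete. Second, the paper's actual proof is different and more elementary: it reduces Theorem \ref{extension1} to the single-shift statement (Proposition \ref{key}) and verifies the four generator identities directly, using Lemma \ref{case1} (image of a root vector has no Cartan component and $\gamma(\phi(h))=\alpha(h)$) and Lemma \ref{case2} ($\gamma(\phi(h_2))=0$); this route needs no conjugation description and so is not restricted to classical types, whereas the conjugation picture you rely on is only established in the paper for types $A$--$D$.
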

Let $\ell=(\ell_1,\ell_2)$ be the Dynkin multi-index (see Section \ref{conjugation}) of the embedding $\phi$. Let $\vec{\lambda}=(\lambda_1,\dots, \lambda_n)$ and $\vec{\gamma}=(\gamma_1, \dots, \gamma_n)$ be $n$-tuples of weights of $\geg_1$ and $\geg_2$ of level $\ell_1$ and $\ell_2$. Let $\vec{\Lambda}=(\Lambda_1, \dots, \Lambda_n)$ be an $n$-tuple of level $1$ weights of $\geg$ such that for $1\leq i \leq n$, the module $\mathcal{H}_{\lambda_i}\otimes \mathcal{H}_{\gamma_i}$ appears in the branching of $\mathcal{H}_{\Lambda_i}$ with multiplicity one. We get a map of conformal blocks. This map is known as a rank-level duality map. We refer the reader to ~\cite{M} for more details. 
$$\Psi : \mathcal{V}_{\vec{\lambda}}(\mathfrak{X}, \geg_1,\ell_1)\otimes \mathcal{V}_{\vec{\gamma}}(\mathfrak{X},\geg_2,\ell_2)\rightarrow \mathcal{V}_{\vec{\Lambda}}(\mathfrak{X},\geg,1).$$ 

\begin{Rem}
The rank-level duality map in ~\cite{M} is defined only for conformal embeddings but the same definition will work for arbitrary embeddings of Lie algebras.
\end{Rem}
Let $\vect{\Omega}=(\Omega_1,\dots, \Omega_n)$, where for each $1\leq i \leq n$, $\Omega_i$ is the image of the diagram automorphisms $\omega_i$ under the embedding $\phi$. Combining Theorem \ref{extension1} with the isomorphism ( Proposition \ref{fs}) in ~\cite{FS}, we have the following important corollary:
\begin{cor}\label{ext}
 The following are equivalent:
\begin{enumerate} 
\item The map $\mathcal{V}_{\vec{\lambda}}(\mathfrak{X}, \geg_1,\ell_1)\otimes \mathcal{V}_{\vec{\gamma}}(\mathfrak{X},\geg_2,\ell_2)\rightarrow \mathcal{V}_{\vec{\Lambda}}(\mathfrak{X},\geg,1)$ is nondegenerate.  
\item The map $\mathcal{V}_{\vec{\lambda}}(\mathfrak{X}, \geg_1,\ell_1)\otimes \mathcal{V}_{\vec{\omega}\vec{\gamma}}(\mathfrak{X},\geg_2,\ell_2)\rightarrow \mathcal{V}_{\vec{\Omega}\vec{\Lambda}}(\mathfrak{X},\geg,1)$ is nondegenerate. 
\end{enumerate}
\end{cor}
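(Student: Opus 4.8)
The plan is to deduce Corollary \ref{ext} by transporting the rank-level duality map $\Psi$ along the $[\FS]$-isomorphisms on the two factors and on the target, using Theorem \ref{extension1} to guarantee that these three isomorphisms are compatible with the map $\Psi$ itself. Concretely, choose a multi-index $\vect{\mu}=(\mu_1,\dots,\mu_n)\in (P_1^\vee)^n$ with $\sum_i \mu_i \in Q_1^\vee$ whose associated diagram automorphisms of $\wgeg_1$ are $\vect{\omega}=(\omega_1,\dots,\omega_n)$; then $\vect{\tilde\mu}=(\phi(\mu_1),\dots,\phi(\mu_n))$ lies in $(P^\vee)^n$ and its associated diagram automorphisms of $\wgeg$ are exactly $\vect{\Omega}=(\Omega_1,\dots,\Omega_n)$, since $\Omega_i$ is by definition the image of $\omega_i$ under $\phi$. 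Because $G_2$ plays no role on the $\wgeg_2$-side of the conjugation (we pair $\sigma_{\vect{\mu},s}$ with $\mathrm{id}$ on $\wgeg_2$), the weights $\vect{\gamma}$ are left untouched; however, for the statement one wants $\vect{\omega}\vect{\gamma}$ on the second factor, so one actually applies the $[\FS]$-isomorphism for $\geg_2$ with the multi-charge $\vect{\omega}$ directly — this is the single place where the asymmetry between the two factors is used, and it is harmless because the $[\FS]$-identification $\mathcal V_{\vect\gamma}(\mathfrak X,\geg_2,\ell_2)\cong \mathcal V_{\vect\omega\vect\gamma}(\mathfrak X,\geg_2,\ell_2)$ exists unconditionally.

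The key steps, in order, are as follows. First, invoke Proposition \ref{fs} to obtain the three KZ-flat isomorphisms
$$\Theta^{(1)}:\mathcal V_{\vect\lambda}(\mathfrak X,\geg_1,\ell_1)\xrightarrow{\ \sim\ }\mathcal V_{\vect\lambda}(\mathfrak X,\geg_1,\ell_1),\quad \Theta^{(2)}:\mathcal V_{\vect\gamma}(\mathfrak X,\geg_2,\ell_2)\xrightarrow{\ \sim\ }\mathcal V_{\vect\omega\vect\gamma}(\mathfrak X,\geg_2,\ell_2),\quad \Theta:\mathcal V_{\vect\Lambda}(\mathfrak X,\geg,1)\xrightarrow{\ \sim\ }\mathcal V_{\vect\Omega\vect\Lambda}(\mathfrak X,\geg,1),$$
where on the $\geg_1$-factor we in fact take $\vect\omega$ to be trivial (i.e. $\Theta^{(1)}=\mathrm{id}$) so that $\vect\lambda$ is unchanged, matching both sides of the corollary; the nontrivial transport happens on the $\geg_2$ and $\geg$ factors. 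Second, use Theorem \ref{extension1}: the commuting square there, combined with the fact that $\Theta_{\vect\mu}(\vect z)$ is built by exponentiating the action of $\sigma_{\vect\mu,s}$ on the Fock-type modules, shows that the map $\Theta_{\vect{\tilde\mu}}(\vect z)$ on $\otimes_i \mathcal H_{\Lambda_i}$ restricts, under the branching $\mathcal H_{\lambda_i}\otimes\mathcal H_{\gamma_i}\hookrightarrow \mathcal H_{\Lambda_i}$, to $\Theta^{(1)}_{\vect\mu}(\vect z)\otimes \Theta^{(2)}_{\vect\omega}(\vect z)$; passing to coinvariants over the current algebra $\geg\otimes\Gamma(\mathbb P^1-\vect z,\mathcal O)$ — which is preserved by all the relevant automorphisms — gives the commutativity of the square
$$\xymatrix{ \mathcal V_{\vect\lambda}(\mathfrak X,\geg_1,\ell_1)\otimes \mathcal V_{\vect\gamma}(\mathfrak X,\geg_2,\ell_2) \ar[r]^-{\Psi}\ar[d]_{\mathrm{id}\otimes\Theta^{(2)}} & \mathcal V_{\vect\Lambda}(\mathfrak X,\geg,1)\ar[d]^{\Theta} \\ \mathcal V_{\vect\lambda}(\mathfrak X,\geg_1,\ell_1)\otimes \mathcal V_{\vect\omega\vect\gamma}(\mathfrak X,\geg_2,\ell_2)\ar[r]^-{\Psi'} & \mathcal V_{\vect\Omega\vect\Lambda}(\mathfrak X,\geg,1), }$$
where $\Psi'$ is the rank-level duality map attached to the shifted weights. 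Third, since $\mathrm{id}\otimes\Theta^{(2)}$ and $\Theta$ are isomorphisms, $\Psi$ is nondegenerate if and only if $\Psi'$ is; this is exactly the asserted equivalence.

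The main obstacle is the second step: one must check that the $[\FS]$-map $\Theta_{\vect{\tilde\mu}}(\vect z)$ on the ambient $\wgeg$-modules is genuinely compatible with the branching decomposition and with the map $\widehat\phi$, not merely that the underlying automorphisms commute (which is what Theorem \ref{extension1} gives). This requires unwinding the explicit definition of $\Theta_{\vect\mu}(\vect z)$ from $[\FS]$ — it is essentially $\exp$ of a sum of operators read off from $\sigma_{\vect\mu,s}$ together with a normalization making the induced map KZ-flat — and verifying that applying $\widehat\phi$ and then $\Theta_{\vect{\tilde\mu}}(\vect z)$ agrees, after restriction to the submodule $\otimes_i(\mathcal H_{\lambda_i}\otimes\mathcal H_{\gamma_i})$, with first applying $\Theta^{(1)}_{\vect\mu}(\vect z)\otimes\Theta^{(2)}_{\vect\omega}(\vect z)$ and then $\widehat\phi$. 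The normalization constants deserve care: the $\conj$ constant in the definition of $\Psi$ and the constant in $\Theta_{\vect\mu}(\vect z)$ must be tracked so that the diagram commutes on the nose rather than up to scalar — but since only nondegeneracy is being compared, even an up-to-scalar commuting square suffices, which makes this manageable. One subtlety worth flagging explicitly in the proof is the role of the condition $\sum_i\mu_i\in Q_1^\vee$ (needed for $\vect\mu$ to define honest diagram automorphisms with $\prod_s\omega_s=\mathrm{id}$) and the corresponding condition $\sum_i\phi(\mu_i)\in Q^\vee$ on the target; the latter follows from the former because $\phi$ maps the coroot lattice of $\geg_1$ into that of $\geg$.
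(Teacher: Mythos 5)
Your overall strategy---transporting $\Psi$ along the three isomorphisms of Proposition \ref{fs} and invoking Theorem \ref{extension1} to make the resulting square commute---is the same as the paper's, and the commuting square you display at the end is the correct one. However, the way you arrive at it contains a genuine inconsistency. The corollary shifts the $\geg_2$-weights $\vect{\gamma}$ by $\vect{\omega}$ and leaves $\vect{\lambda}$ untouched, so the multi-shift must be placed on the \emph{second} factor: one should choose $\vect{\mu}\in\Gamma_n^{\geg_2}$ realizing $\vect{\omega}$ as diagram automorphisms of $\wgeg_2$ and apply Theorem \ref{extension1} with the roles of $\geg_1$ and $\geg_2$ interchanged (the theorem is symmetric in the two factors), so that $\widehat{\phi}$ intertwines $\mathrm{id}\oplus\sigma_{\vect{\mu},s}$ with $\sigma_{\vect{\tilde{\mu}},s}$ and $\vect{\tilde{\mu}}$ realizes $\vect{\Omega}$. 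Instead you take $\vect{\mu}\in(P_1^{\vee})^n$ realizing $\vect{\omega}$ on $\wgeg_1$, and then simultaneously (i) declare $\Theta^{(1)}=\mathrm{id}$ and (ii) apply a Fuchs--Schweigert isomorphism ``with multi-charge $\vect{\omega}$'' on the $\geg_2$ factor ``directly.'' Both moves fail: a nontrivial $\vect{\mu}$ on the $\geg_1$ side induces $\mathcal{V}_{\vect{\lambda}}\rightarrow\mathcal{V}_{\vect{\omega}\vect{\lambda}}$, which is not the identity unless every $\omega_i^*\lambda_i=\lambda_i$; and an isomorphism applied to the $\geg_2$ factor independently of the ambient shift has no a priori compatibility with $\Psi$---that compatibility is exactly what Theorem \ref{extension1} must supply, and in your configuration the theorem says the ambient automorphism restricts to the \emph{identity} on $\widehat{\phi}_2(\wgeg_2)$, so $\Theta_{\vect{\tilde{\mu}}}(\vect{z})$ preserves each $\mathcal{H}_{\gamma_i}$ and cannot restrict to $\Theta^{(1)}\otimes\Theta^{(2)}_{\vect{\omega}}$ with $\Theta^{(2)}_{\vect{\omega}}$ permuting the $\mathcal{H}_{\gamma_i}$ nontrivially. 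Your claimed restriction in the ``second step'' therefore contradicts your own setup.

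The repair is short: with $\vect{\mu}$ on the $\geg_2$ side, $\sigma_{\vect{\tilde{\mu}},s}$ acts on $\widehat{\phi}(\wgeg_1\oplus\wgeg_2)$ as $\mathrm{id}\oplus\sigma_{\vect{\mu},s}$, hence carries the multiplicity-one component $\mathcal{H}_{\lambda_i}\otimes\mathcal{H}_{\gamma_i}\subset\mathcal{H}_{\Lambda_i}$ to $\mathcal{H}_{\lambda_i}\otimes\mathcal{H}_{\omega_i^*\gamma_i}\subset\mathcal{H}_{\Omega_i^*\Lambda_i}$, and by multiplicity one the intertwiner $\Theta_{\vect{\tilde{\mu}}}(\vect{z})$ restricts there, up to a nonzero scalar, to $\mathrm{id}\otimes\Theta_{\vect{\mu}}(\vect{z})$. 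Since all the automorphisms involved preserve the current algebra $\geg\otimes\Gamma(\mathbb{P}^1-\vect{z},\mathcal{O}_{\mathbb{P}^1})$, the square descends to coinvariants, and, as you correctly observe, up-to-scalar commutativity is enough to conclude that nondegeneracy of $\Psi$ and of $\Psi'$ are equivalent.
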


We now restrict ourselves to two specific embeddings. First we consider the map of Lie algebras induced by the tensor product of vector spaces $\phi  : \mathfrak{sl}(r)\oplus \mathfrak{sl}(s) \rightarrow \mathfrak{sl}(rs)$. Let $\vect{\lambda} =(\lambda_1, \cdots, \lambda_n)$ be a $n$-tuple of dominant weights of $\mathfrak{sl}(r)$ of level $s$ and $\vect{\gamma}=(\gamma_1,\cdots, \gamma_n)$ be a $n$-tuple of dominant weights of $\mathfrak{sl}(s)$ of level $r$. Assume that $\mathcal{H}_{\lambda_i} \otimes \mathcal{H}_{\gamma_i}$ appears in the decomposition of $\mathcal{H}_{\Lambda_{k_i}}$ where $\Lambda_{k_i}$ is a level $1$ dominant weight of $\mathfrak{sl}(rs)$. Using the branching rules described in ~\cite{ABI}, we get a rank-level duality map
$$\Psi :\mathcal{V}_{\vect{\lambda}}(\mathfrak{X}, \mathfrak{sl}(r),s)\otimes \mathcal{V}_{\vect{\gamma}}(\mathfrak{X}, \mathfrak{sl}(s),r) \rightarrow \mathcal{V}_{\vect{\Lambda}}(\mathfrak{X}, \mathfrak{sl}(rs),1),$$ where $\vec{\Lambda}=(\Lambda_{k_1}, \dots, \Lambda_{k_n})$. When $\dim_{\CC}\mathcal{V}_{\vect{\Lambda}}(\mathfrak{X}, \mathfrak{sl}(rs),1)=1$, the following is proved in ~\cite{NT}:
\begin{prop}\label{sl}
 The map $\Psi$ is nondegenerate. In particular one gets a rank-level duality isomorphism:
$$ \mathcal{V}_{\vect{\lambda}}(\mathfrak{X}, \mathfrak{sl}(r),s)\rightarrow \mathcal{V}^{\dagger}_{\vect{\gamma}}(\mathfrak{X}, \mathfrak{sl}(s),r).$$
\end{prop}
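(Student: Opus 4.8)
\emph{Reduction to a bilinear form.} The statement is the genus-zero level-rank duality for $\mathfrak{sl}$ of Nakanishi and Tsuchiya ~\cite{NT} in the case of a one-dimensional target; here is how I would organize a proof. Composing $\Psi$ with a fixed isomorphism $\mathcal{V}_{\vect{\Lambda}}(\mathfrak{X},\mathfrak{sl}(rs),1)\xrightarrow{\ \sim\ }\CC$ turns $\Psi$ into a bilinear pairing
$$B:\mathcal{V}_{\vect{\lambda}}(\mathfrak{X},\mathfrak{sl}(r),s)\otimes\mathcal{V}_{\vect{\gamma}}(\mathfrak{X},\mathfrak{sl}(s),r)\longrightarrow\CC ,$$
and $\Psi$ is nondegenerate exactly when $B$ is a perfect pairing, which in turn gives the asserted isomorphism $\mathcal{V}_{\vect{\lambda}}(\mathfrak{X},\mathfrak{sl}(r),s)\to\mathcal{V}^{\dagger}_{\vect{\gamma}}(\mathfrak{X},\mathfrak{sl}(s),r)$. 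Since the whole set-up — in particular the conformal embedding $\widehat{\mathfrak{sl}}(r)_s\oplus\widehat{\mathfrak{sl}}(s)_r\hookrightarrow\widehat{\mathfrak{sl}}(rs)_1$ underlying $\phi$ — is symmetric under $r\leftrightarrow s$, $\vect{\lambda}\leftrightarrow\vect{\gamma}$, it is enough to prove that $B$ has trivial left \emph{and} right kernel, and a single computation does both; one then does not even need to know in advance that the two source spaces have equal dimension, as this falls out of the argument (it can also be quoted from the Verlinde formula).

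\emph{Making $B$ explicit via free fermions.} I would realize everything inside a fermionic Fock space $\mathcal{F}$: the affine algebra $\widehat{\mathfrak{gl}}(rs)_1$ acts on $\mathcal{F}$, its restriction to $\widehat{\mathfrak{sl}}(rs)_1$ decomposes the charge sectors of $\mathcal{F}$ into the integrable modules $\mathcal{H}_{\Lambda_k}$ tensored with Heisenberg Fock spaces (which one strips off), and the $n$-point conformal block of $\widehat{\mathfrak{gl}}(rs)_1$ on $(\mathbb{P}^1,\vect{z})$ is computed by Wick's theorem — it is an explicit rational function built from the two-point propagator $1/(z_i-z_j)$. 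The tensor-product embedding $GL(r)\times GL(s)\hookrightarrow GL(rs)$ splits the $rs$ fermions into an $r\times s$ array and refines the decomposition of $\mathcal{F}$; matching this with the branching rules of ~\cite{ABI} identifies, for each $i$, the multiplicity-one embedding $\mathcal{H}_{\lambda_i}\otimes\mathcal{H}_{\gamma_i}\hookrightarrow\mathcal{H}_{\Lambda_{k_i}}$ with a concrete wedge subspace, and so expresses $B$ through fermionic correlators. Fixing a suitable basis of each of $\mathcal{V}_{\vect{\lambda}}(\mathfrak{X},\mathfrak{sl}(r),s)$ and $\mathcal{V}_{\vect{\gamma}}(\mathfrak{X},\mathfrak{sl}(s),r)$, I would then study the Gram matrix of $B$: ordering the bases by a maximal degeneration of $(\mathbb{P}^1,\vect{z})$ into a chain of three-pointed rational curves — equivalently, by the leading order of the correlator when the points collide in a fixed pattern — one expects the Gram matrix to become triangular with non-vanishing diagonal entries, which forces $B$ to be nondegenerate.

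\emph{The main obstacle.} The real work lies in this triangularity: one must (a) choose the bases of the coinvariant spaces carefully enough that the entries of $B$ can be controlled, (b) show that the top-order term of $B(v,w)$ in the degeneration filtration is a nonzero scalar precisely when $v$ and $w$ are \emph{dual} basis vectors and vanishes strictly below the diagonal, and (c) verify that the fermionic decomposition really does carry the branching maps of ~\cite{ABI} to the wedge decomposition with the multiplicity-one normalization used to define $\Psi$ — it is here that the hypothesis that $G$ is classical, and the combinatorics of cores and quotients of Young diagrams, come in. Finally, for awkward configurations $\vect{\Lambda}$ one can reduce the casework by invoking Corollary \ref{ext}: twisting $\vect{\gamma}$ by a diagram automorphism of $\mathfrak{sl}(s)$ (respectively $\vect{\lambda}$ by one of $\mathfrak{sl}(r)$) moves each $\Lambda_{k_i}$ within its residue class modulo $r$ (respectively modulo $s$) without changing whether $\Psi$ is nondegenerate.
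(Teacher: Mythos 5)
Your outline is essentially a sketch of the original Nakanishi--Tsuchiya argument, and as written it has a genuine gap: everything hinges on the claim that, after degenerating $(\mathbb{P}^1,\vect{z})$ to a chain of three-pointed rational curves, the Gram matrix of $B$ becomes triangular with nonvanishing diagonal. You correctly flag this as ``the main obstacle,'' but that claim \emph{is} the theorem — establishing your step (b), the nonvanishing on the diagonal and strict vanishing below it for the tensor-product embedding with the branching rules of ~\cite{ABI}, is where all the work of ~\cite{NT} (and of the analogous sewing argument in ~\cite{M}) lives. Reducing the statement to an unproved triangularity assertion of comparable difficulty does not constitute a proof. (Your symmetry remark about treating both kernels at once is fine, and you rightly note that equality of dimensions can be quoted from the Verlinde formula, so that part is not a gap.)

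The paper takes a different and much shorter route, and the contrast is instructive: Corollary \ref{ext} is not a device for ``reducing casework'' at the end — it is the whole proof. One first uses propagation of vacua to attach an extra point $Q_1$ carrying the vacuum on both sides; one then applies Corollary \ref{ext}, with diagram automorphisms of $\mathfrak{sl}(s)$ chosen from the combinatorics of $\beta$ and $c(Y(\lambda_i))$, to replace $\vect{\gamma}$ by $\vect{\lambda}^T$ at the original points at the cost of placing the weight $\beta=r\omega_{\sigma}$ at $Q_1$. The resulting statement — nondegeneracy of $\mathcal{V}_{\vect{\lambda}\cup 0}(\widetilde{\mathfrak{X}},\mathfrak{sl}(r),s)\rightarrow\mathcal{V}^{\dagger}_{\vect{\lambda}^T\cup\beta}(\widetilde{\mathfrak{X}},\mathfrak{sl}(s),r)$ — is exactly Theorem 4.10 of ~\cite{R}, proved there geometrically, so no fermionic computation is needed. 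The new input making this legitimate is Theorem \ref{extension1}, the compatibility of the (multi-)shift automorphisms with the embedding, which guarantees that twisting $\vect{\gamma}$ on the $\mathfrak{sl}(s)$ side corresponds to twisting $\vect{\Lambda}$ by $\vect{\Omega}$ on the $\mathfrak{sl}(rs)$ side without changing nondegeneracy. If you want to salvage your approach, you would either have to carry out the triangularity analysis in full or, as the paper does, trade it for a citation by first normalizing $\vect{\gamma}$ via Corollary \ref{ext}.
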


 We use Corollary ~\ref{ext} and the rank-level duality in ~\cite{R} to give an alternate proof of Proposition ~\ref{sl}. 
 \begin{Rem}
 A natural question is whether the rank-level dualities of ~\cite{NT} that arise from representation theory are same as the parabolic strange duality of ~\cite{R} obtained using geometry. The alternate proof of Proposition ~\ref{sl} in this paper gives an affirmative answer to the above question.
 \end{Rem}
\begin{Rem}
An alternative proof of rank-level dualities of ~\cite{NT} without using the result of ~\cite{R} can be obtained using the same strategy as the proof of rank-level duality result in ~\cite{M}. 
\end{Rem}
Next we consider the embedding $\phi :\mathfrak{sp}(2r)\oplus \mathfrak{sp}(2s) \rightarrow \mathfrak{so}(4rs)$. Let $\vect{\lambda} =(\lambda_1, \cdots, \lambda_n)$ be an $n$-tuple of dominant weights of $\mathfrak{sp}(2r)$ of level $s$. Then $\vect{\lambda}^T=(\lambda_1^T, \cdots, \lambda_n^T)$ is an $n$-tuple of dominant weights of $\mathfrak{sp}(2s)$ of level $r$. Assume that both $n$ and $\sum_{i=1}^n|\lambda_i|$ is even. We use the symplectic rank-level duality in ~\cite{A} and Corollary ~\ref{ext} to prove new symplectic rank-level dualities. 


\begin{cor}\label{ext5}
There is a linear isomorphism of the following spaces:
$$\mathcal{V}_{\vect{\lambda}}(\mathfrak{X}, \mathfrak{sp}(2r),s)\rightarrow  \mathcal{V}^{\dagger}_{\vect{{\lambda}}^T}(\mathfrak{X}, \mathfrak{sp}(2s),r),$$
\end{cor}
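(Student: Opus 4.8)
The plan is to deduce Corollary~\ref{ext5} from the symplectic rank--level duality of ~\cite{A} for the embedding $\phi:\mathfrak{sp}(2r)\oplus\mathfrak{sp}(2s)\hookrightarrow\mathfrak{so}(4rs)$, by transporting nondegeneracy along a diagram automorphism via Corollary~\ref{ext}. First I would set up the combinatorial dictionary: a level-$s$ dominant weight of $\mathfrak{sp}(2r)$ is a Young diagram inside the $r\times s$ box, and $\lambda\mapsto\lambda^{T}$ identifies these with the level-$r$ dominant weights of $\mathfrak{sp}(2s)$; the nontrivial element of $Z(Sp(2s))\cong\mathbb{Z}/2\mathbb{Z}$ gives a diagram automorphism $\omega$ whose induced involution $\omega^{\ast}$ on level-$r$ weights is complementation inside the $s\times r$ box, and under $\phi$ this element maps to an element $\Omega$ of $Z(Spin(4rs))$ whose action $\Omega^{\ast}$ on the four level-$1$ weights of $\mathfrak{so}(4rs)$ is read off from the branching of $\mathfrak{sp}(2r)\oplus\mathfrak{sp}(2s)$ into $\mathfrak{so}(4rs)$.

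Next I would invoke ~\cite{A} to produce, for this $\phi$, a nondegenerate rank--level duality map
$$\Psi:\mathcal{V}_{\vec{\lambda}}(\mathfrak{X},\mathfrak{sp}(2r),s)\otimes\mathcal{V}_{\vec{\gamma}}(\mathfrak{X},\mathfrak{sp}(2s),r)\longrightarrow\mathcal{V}_{\vec{\Lambda}}(\mathfrak{X},\mathfrak{so}(4rs),1),$$
where $\vec{\gamma}$ differs from $\vec{\lambda}^{T}$ by applying $\omega^{\ast}$ at each marked point and $\vec{\Lambda}$ is the $n$-tuple of level-$1$ weights of $\mathfrak{so}(4rs)$ determined, coordinate by coordinate, by requiring $\mathcal{H}_{\lambda_i}\otimes\mathcal{H}_{\gamma_i}$ to occur in $\mathcal{H}_{\Lambda_i}$ with multiplicity one. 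Since for $D$-type at level $1$ the fusion on $\mathbb{P}^{1}$ is governed by the group $Z(Spin(4rs))$, and since the central character of $\mathcal{H}_{\lambda_i}\otimes\mathcal{H}_{\gamma_i}$ depends only on $|\lambda_i|\bmod 2$ (transposition preserving the number of boxes), the hypothesis that $\sum_i|\lambda_i|$ is even is precisely the statement that the product of the $\Lambda_i$ is trivial, that is, $\dim_{\CC}\mathcal{V}_{\vec{\Lambda}}(\mathfrak{X},\mathfrak{so}(4rs),1)=1$; hence $\Psi$ is a perfect pairing.

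Finally I would apply Corollary~\ref{ext} with $G_1=Sp(2r)$, $G_2=Sp(2s)$, $G=Spin(4rs)$ and $\vec{\omega}=(\omega,\dots,\omega)\in Z(Sp(2s))^{n}$. Since $n$ is even, $\prod_{i=1}^{n}\omega_i=\omega^{n}=\mathrm{id}$, so Proposition~\ref{fs}, and therefore Corollary~\ref{ext}, applies; as $\omega^{\ast}$ is an involution we obtain $\vec{\omega}\vec{\gamma}=\vec{\lambda}^{T}$, while $\vec{\Omega}\vec{\Lambda}$ is again an $n$-tuple of level-$1$ weights of $\mathfrak{so}(4rs)$ with $\prod_i\Omega_i=\phi\bigl(\prod_i\omega_i\bigr)=\mathrm{id}$, so Proposition~\ref{fs} gives $\dim_{\CC}\mathcal{V}_{\vec{\Omega}\vec{\Lambda}}(\mathfrak{X},\mathfrak{so}(4rs),1)=\dim_{\CC}\mathcal{V}_{\vec{\Lambda}}(\mathfrak{X},\mathfrak{so}(4rs),1)=1$. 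Corollary~\ref{ext} then upgrades the nondegeneracy of $\Psi$ to the nondegeneracy of
$$\mathcal{V}_{\vec{\lambda}}(\mathfrak{X},\mathfrak{sp}(2r),s)\otimes\mathcal{V}_{\vec{\lambda}^{T}}(\mathfrak{X},\mathfrak{sp}(2s),r)\longrightarrow\mathcal{V}_{\vec{\Omega}\vec{\Lambda}}(\mathfrak{X},\mathfrak{so}(4rs),1),$$
and since the target is one-dimensional this pairing is perfect and induces the asserted isomorphism $\mathcal{V}_{\vec{\lambda}}(\mathfrak{X},\mathfrak{sp}(2r),s)\to\mathcal{V}^{\dagger}_{\vec{\lambda}^{T}}(\mathfrak{X},\mathfrak{sp}(2s),r)$ onto the dual space of conformal blocks. (If the normalization of ~\cite{A} already yields $\vec{\lambda}^{T}$ in place of its $\omega$-twist, the last two steps are simply interchanged.)

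I expect the main obstacle to be the identification in the first paragraph, combined with the precise extraction of ~\cite{A}: one must pin down the homomorphism $Z(Sp(2s))\to Z(Spin(4rs))$ induced by $\phi$ and the resulting action $\Omega^{\ast}$, and then check that the twisted data $(\vec{\lambda},\vec{\lambda}^{T},\vec{\Omega}\vec{\Lambda})$ still satisfies the multiplicity-one branching $\mathcal{H}_{\lambda_i}\otimes\mathcal{H}_{\lambda_i^{T}}\subset\mathcal{H}_{\Omega_i\Lambda_i}$ that is needed for the second rank--level duality map to be defined and for Corollary~\ref{ext} to apply verbatim. Once this is in place the remaining steps are formal consequences of Theorem~\ref{extension1} and Proposition~\ref{fs}.
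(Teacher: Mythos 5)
Your proposal follows essentially the same route as the paper: it invokes Abe's symplectic strange duality for the weights $\vec{\lambda}^{*}$, uses the identity $\omega(\lambda^{*})=\lambda^{T}$ for the nontrivial central/diagram automorphism $\omega$ of $Sp(2s)$, and then transports nondegeneracy through Corollary~\ref{ext} (with $n$ even ensuring $\prod_i\omega_i=\mathrm{id}$, equivalently that the required $\vec{\mu}\in\Gamma_n^{\mathfrak{sp}(2s)}$ with each $\mu_i\in P^{\vee}\setminus Q^{\vee}$ exists). The details you flag as potential obstacles (the map $Z(Sp(2s))\to Z(Spin(4rs))$ and the one-dimensionality of the level-one $\mathfrak{so}(4rs)$ block) are exactly the points the paper handles via its branching-rule discussion, so the argument is correct and matches the paper's.
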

\subsection{Acknowledgments}
I am grateful to P. Belkale for his suggestions, comments, clarifications and constant encouragement. I thank S. Kumar for useful discussions. This paper is a part of my dissertation at UNC-Chapel Hill.

\section{Affine Lie algebras and conformal blocks }\label{conformalblock}

We recall some basic definitions from ~\cite{TUY} in the theory of conformal blocks. Let $\frg$ be a simple Lie algebra over $\mathbb{C}$ and $\mathfrak{h}$ a Cartan subalgebra of $\frg$. We fix the decomposition of $\frg$ into root spaces 
$$\frg=\mathfrak{h} \oplus \sum_{\alpha \in \Delta}\frg_{\alpha},$$ where $\Delta$ is the set of roots decomposed into a union of $\Delta_{+}\sqcup\Delta_{-}$ of positive and negative roots. Let $(,)$ denote the Cartan Killing form on $\frg$ normalized such that $(\theta, \theta)=2$, where $\theta $ is the longest root and we identify $\mathfrak{h}$ with $\mathfrak{h}^*$ using the form $(,)$.

\subsection{Affine Lie algebras} We define the affine Lie algebra $\widehat{\frg}$ to be 
$$\widehat{\frg}:= \frg\otimes \mathbb{C}((\xi)) \oplus \mathbb{C}c,$$ where $c$ belongs to the center of $\widehat{\frg}$ and the Lie bracket is given as follows:
$$[X\otimes f(\xi), Y\otimes g(\xi)]=[X,Y]\otimes f(\xi)g(\xi) + (X,Y)\Res_{\xi=0}(gdf).c,$$ where $X,Y \in \frg$ and $f(\xi),g(\xi) \in \mathbb{C}((\xi))$. 

Let $X(n)=X\otimes \xi^n$ and $X=X(0)$ for any $X \in \frg $ and $n \in \mathbb{Z}$. The finite dimensional Lie algebra $\frg$ can be realized as a subalgebra of $\widehat{\frg}$ under the identification of $X$ with $X(0)$. 

\subsection{Representation theory of affine Lie algebras} The finite dimensional irreducible modules of $\frg$ are parametrized by the set of dominant integral weights $P_{+} \subset \mathfrak{h}^*$. Let $V_{\lambda}$ denote the irreducible module of highest weight $\lambda \in P_{+}$ and $v_{\lambda}$ denote the highest weight vector.

We fix a positive integer $\ell$ which we call the level. The set of dominant integral weights of level $\ell$ is defined as follows 
$$P_{\ell}(\frg):=\{ \lambda \in P_{+} | (\lambda, \theta) \leq \ell\}$$
For each $\lambda \in P_{\ell}(\frg)$ there is a unique irreducible integrable highest weight $\widehat{\frg}$-module $\mathcal{H}_{\lambda}(\frg)$.
\subsection{Conformal blocks}We fix a $n$ pointed curve $C$ with formal neighborhood $\eta_1,\dots, \eta_n$ around the $n$ points $\vec{p}=(P_1,\dots, P_n)$, which satisfies the following properties :
\begin{enumerate}
\item The curve $C$ has at most nodal singularities,
\item The curve $C$ is smooth at the points $P_1,\dots, P_n$,
\item $C-\{P_1, \dots, P_n\}$ is an affine curve,
\item A stability condition (equivalent to the finiteness of the automorphisms of the pointed curve),
\item Isomorphisms $\eta_i:\widehat{\mathcal{O}}_{C,P_i}\simeq \mathbb{C}[[\xi_i]]$ for $i=1,\dots, n$. 
\end{enumerate}
We denote by $\mathfrak{X}=(C;\vec{p}; \eta_1,\dots,\eta_n)$ the above data associated to the curve $C$. We define another Lie algebra
$$\widehat{\frg}_n:=\bigoplus_{i=1}^n\frg\otimes_{\mathbb{C}}\mathbb{C}((\xi_i)) \oplus \mathbb{C}c,$$ where $c$ belongs to the center of $\widehat{\frg}_n$ and the Lie bracket is given as follows: 
$$[\sum_{i=1}^nX_i \otimes f_i, \sum_{i=1}^nY_i\otimes g_i]:=\sum_{i=1}^n[X_i,Y_i]\otimes f_ig_i + \sum_{i=1}^n(X_i, Y_i)\Res_{\xi_i=0}(g_idf_i)c.$$ 
We define the current algebra to be $$\frg(\mathfrak{X}):=\frg\otimes \Gamma(C-\{P_1,\dots, P_n\}, \mathcal{O}_{C}).$$ Consider an $n$-tuple of weights $\vec{\lambda}=(\lambda_1,\dots, \lambda_n) \in P_{\ell}^n(\frg)$. We set $\mathcal{H}_{\vec{\lambda}}=\mathcal{H}_{\lambda_1}(\frg)\otimes \dots \otimes \mathcal{H}_{\lambda_n}(\frg)$. The algebra $\widehat{\frg}_n$ acts on $\mathcal{H}_{\vec{\lambda}}$. For any $X\in \frg$ and $f\in \mathbb{C}((\xi_i))$, the action of $X\otimes f(\xi_i)$ on the $i$-th component is given by the following: 
$$\rho_i(X\otimes f(\xi_i))|v_1\otimes \dots \otimes v_n\rangle =|v_1\otimes \dots \otimes (X\otimes f(\xi_i)v_i) \otimes \dots \otimes v_n\rangle,$$ where $|v_i\rangle \in \mathcal{H}_{\lambda_i}(\frg)$ for each $i$. 
\begin{definition}
We define the space of conformal blocks 
$$\mathcal{V}^{\dagger}_{\vec{\lambda}}(\mathfrak{X}, \frg):=\operatorname{Hom}_{\mathbb{C}}(\mathcal{H}_{\vec{\lambda}}/\frg(\mathfrak{X})\mathcal{H}_{\vec{\lambda}}, \mathbb{C}).$$ 

We define the space of dual conformal blocks, $\mathcal{V}_{\vec{\lambda}}(\mathfrak{X}, \frg)=\mathcal{H}_{\vec{\lambda}}/\frg(\mathfrak{X})\mathcal{H}_{\vec{\lambda}}$. These are both finite dimensional $\mathbb{C}$-vector spaces which can be defined in families. The dimensions of these vector spaces are given by the {Verlinde formula}. 
\end{definition}

\section{Diagram automorphisms of symmetrizable Kac-Moody algebras.}

Consider a symmetrizable generalized Cartan Matrix A of size $n$ and let $\geg(A)$ denote the associated Kac-Moody algebra. To a symmetrizable generalized Cartan Matrix, one can associate a Dynkin diagram which is a graph on $n$ vertices, see [K] for details.
\begin{definition}
 A diagram automorphism of a Dynkin diagram is a graph automorphism i.e. it is a bijection $\omega$ from the set of vertices of the graph to itself such that for $1 \leq i,j \leq n$ the following holds:
$$a_{\omega i, \omega j} = a_{i,j}.$$

\end{definition}
\subsection{Outer Automorphisms}
We construct an automorphism of a symmetrizable Kac-Moody algebra $\geg(A)$ from a diagram automorphism $\omega$ of the Dynkin diagram of $\geg(A)$. The Kac-Moody Lie algebra $\geg(A)$ is the Lie algebra over $\CC$ generated by a Cartan subalgebra and the symbols $e_i$, $f_i$, $1\leq i \leq n$ with some relations. We refer the reader to ~\cite{K} for a detailed definition.

We start by defining the action of $\omega$ on the generators $e_i$ and $f_j$ in the following way: 
$$ \omega(e_{i}):=e_{\omega{i}} \ \ \mbox{and} \ \  \omega(f_{i}):=f_{\omega{i}}$$. For a simple coroot $\alpha_i^{\vee}$, we know that $\alpha_i^{\vee}=[e_i,f_i]$. Since $\omega$ is a Lie algebra automorphism, it implies the following:
$$\omega(\alpha_{i}^{\vee} )= \omega \left[ e_{i}, f_i \right] = \left[ e_{\omega i }, f_{\omega_{i}} \right]= \alpha_{\omega i }^{\vee},$$ where $\alpha_i^{\vee}$ are the simple coroots. 

In this way we have constructed an automorphism of the derived algebra $\geg(A)'=[\geg(A),\geg(A)]$. The extension of the action of $\omega$ to $\geg(A)$, follows from Lemma 1.3.1 in ~\cite{H}. The order of the automorphism $\omega$ of the Lie algebra $\mathfrak{g}(A)$ is same as the corresponding diagram automorphism. We will refer to these automorphisms as outer automorphisms. 

\subsection{Action $\omega^*$ on the affine fundamental weights} The map $\omega$ restricted to the Cartan subalgebra $\heh(A)$ defines an automorphism of $\omega : \heh(A) \rightarrow \heh(A)$. The adjoint action of $\omega^*$ on $\heh(A)^*$ is given by $\omega^*(\lambda)(x)=\lambda(\omega^{-1}x)$ for $\lambda \in \heh(A)^*$ and $x \in \heh(A)$. For $0\leq i \leq n$, let $\Lambda_i$ be the affine fundamental weight corresponding to the $i$-th coroot. Then $\omega^*(\Lambda_i)=\Lambda_{\omega i}$.

\section{Single-shift automorphisms of $\wgeg$}\label{conjugation}

Consider a finite dimensional complex simple Lie algebra $\geg$. For each $\alpha \in \Delta$, choose a non-zero element $X_{\alpha} \in \geg_{\alpha}$. Then, we have the following:
$$ 
[X_{\alpha}, X_{\beta}] = \begin{cases} 
N_{\alpha, \beta} X_{\alpha + \beta}  & \text{ if $\alpha + \beta \in \Delta$,} \\
0 & \text{if $\alpha + \beta \notin \Delta $},
\end{cases}
$$
where $N_{\alpha, \beta}$ is a non-zero scalar. The coefficients $N_{\alpha, \beta}$ completely determine the multiplication table of $\geg$. However, they depend on the choice of the elements $X_{\alpha}$. We refer the reader to ~\cite{Ser} for a proof of the following proposition:
\begin{prop}\label{chevbas}
One can choose the elements $X_{\alpha}$ is such a way so that 
$$ \left[ X_{\alpha}, X_{-\alpha} \right]=H_{\alpha} \ \mbox{ for all $\alpha \in \Delta$},$$
$$N_{\alpha, \beta}=-N_{-\alpha, -\beta} \ \mbox{ for all $\alpha$, $\beta$, $\alpha + \beta \in \Delta$}, $$

\end{prop}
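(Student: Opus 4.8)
The plan is to produce the required family $\{X_\alpha\}_{\alpha\in\Delta}$ in three stages: make a provisional choice realizing the first relation, build the Chevalley involution of $\frg$, and then rescale so that this involution interchanges $X_\alpha$ with $-X_{-\alpha}$; the second relation then falls out by applying the involution to the bracket identities. This is the classical argument (see \cite{Ser}).

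First, fix $\Delta_{+}$. For each $\alpha\in\Delta_{+}$ choose any nonzero $X_\alpha\in\frg_\alpha$. Since $\dim\frg_{\pm\alpha}=1$ and $[\frg_\alpha,\frg_{-\alpha}]=\CC H_\alpha$, the map $Y\mapsto[X_\alpha,Y]$ from $\frg_{-\alpha}$ onto $\CC H_\alpha$ is an isomorphism of lines, so there is a unique $X_{-\alpha}\in\frg_{-\alpha}$ with $[X_\alpha,X_{-\alpha}]=H_\alpha$; since $H_{-\alpha}=-H_\alpha$, the identity $[X_{-\alpha},X_\alpha]=H_{-\alpha}$ then holds automatically, so the first displayed relation is satisfied for all $\alpha\in\Delta$. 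In particular $e_i:=X_{\alpha_i}$, $f_i:=X_{-\alpha_i}$, $h_i:=H_{\alpha_i}$ form a set of Chevalley generators.

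Second, I would introduce the Chevalley involution $\sigma$ of $\frg$: the assignment $e_i\mapsto -f_i$, $f_i\mapsto -e_i$, $h_i\mapsto -h_i$ respects the Serre relations, hence extends to an automorphism with $\sigma^2=\mathrm{id}$. Because $\sigma$ acts by $-1$ on $\heh$, it restricts to an isomorphism $\frg_\alpha\xrightarrow{\ \sim\ }\frg_{-\alpha}$ for each $\alpha$, so $\sigma(X_\alpha)=c_\alpha X_{-\alpha}$ for scalars $c_\alpha$; applying $\sigma^2=\mathrm{id}$ gives $c_\alpha c_{-\alpha}=1$, while $c_{\alpha_i}=-1$ for simple roots. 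Applying $\sigma$ to $[X_\alpha,X_{-\alpha}]=H_\alpha$ reconfirms $c_\alpha c_{-\alpha}=1$, which shows the first relation is unaffected by any rescaling $X_{\pm\alpha}\mapsto t_{\pm\alpha}X_{\pm\alpha}$ with $t_\alpha t_{-\alpha}=1$.

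Third, for each unordered pair $\{\alpha,-\alpha\}$ pick $t_\alpha$ with $t_\alpha^2=-c_\alpha^{-1}$ and set $t_{-\alpha}=t_\alpha^{-1}$ (choosing $t_{\alpha_i}=1$ for simple roots, legitimate since $c_{\alpha_i}=-1$). Replacing $X_\alpha$ by $t_\alpha X_\alpha$ keeps $[X_\alpha,X_{-\alpha}]=H_\alpha$ and now yields $\sigma(X_\alpha)=-X_{-\alpha}$ for every $\alpha$, the compatibility $t_{-\alpha}^2=-c_{-\alpha}^{-1}$ being automatic from $c_{-\alpha}=c_\alpha^{-1}$. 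With this choice in hand, for $\alpha,\beta,\alpha+\beta\in\Delta$ apply $\sigma$ to $[X_\alpha,X_\beta]=N_{\alpha,\beta}X_{\alpha+\beta}$: the left side becomes $[-X_{-\alpha},-X_{-\beta}]=N_{-\alpha,-\beta}X_{-\alpha-\beta}$ and the right side becomes $-N_{\alpha,\beta}X_{-\alpha-\beta}$, so $N_{\alpha,\beta}=-N_{-\alpha,-\beta}$, the second relation. The step requiring care is the simultaneous rescaling: one must check that the square roots $t_\alpha$ can be selected coherently over all pairs $\{\alpha,-\alpha\}$ subject to $t_\alpha t_{-\alpha}=1$, and that fixing $t_{\alpha_i}=1$ does not clash with the normalization already imposed on the simple generators; the remaining verifications are immediate consequences of $\sigma$ being an automorphism.
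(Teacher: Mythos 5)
Your argument is correct and complete: the construction via a provisional choice satisfying $[X_\alpha,X_{-\alpha}]=H_\alpha$, the Chevalley involution $\sigma$, and the rescaling forcing $\sigma(X_\alpha)=-X_{-\alpha}$ is exactly the classical proof in \cite{Ser}, which is all the paper itself does (it states the proposition and refers to \cite{Ser} without giving a proof). The compatibility worries you flag at the end are already discharged by your own computation $t_{-\alpha}^2=-c_{-\alpha}^{-1}$ and the observation $c_{\alpha_i}=-1$, so nothing further is needed.
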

where $H_{\alpha}$ is the coroot corresponding to $\alpha$. The basis $\{ X_{\alpha}, X_{-\alpha}, H_{\alpha} : \alpha \in \Delta_{+} \}$ is known as a Chevalley basis. 

To every $\alpha_i\in \Delta_{+}$, the simple coroot $X_{i} \in \heh$ is defined by the property $X_{i}(\alpha_j)=\delta_{ij}$ for $\alpha_j \in \Delta_{+}$. The lattice generated by $\{X_{i} : 1\leq i\leq \operatorname{rank}(\frg)\}$ is called the coweight lattice and is denoted by $P^{\vee}$. We identify $\heh$ with $\heh^*$ using the normalized Cartan killing form and let $h_{\alpha}$ denote the image of $\alpha$ under the identification.

For every $\mu \in P^{\vee}$, we define an map $\sigma_{\mu}$ of the Lie algebra $\wgeg$ 
\begin{eqnarray*} 
\sigma_{\mu} (c)&:=& c, \\
\sigma_{\mu}(h(n)) &:=&  h(n) + \delta_{n,0}\la \mu, h \ra .c \ \mbox{ where $h \in \heh$ and $n \in \mathbb{Z}$},\\ 
\sigma_{\mu}(X_{\alpha}(n)) &:=& X_{\alpha}(n + \la \mu, \alpha \ra).
\end{eqnarray*} 

\begin{prop} The map
$$ \sigma_{\mu} :\wgeg \rightarrow \wgeg,$$ is a Lie algebra automorphism. 
\end{prop}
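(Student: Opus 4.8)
The plan is to verify by a direct computation that $\sigma_{\mu}$ is a bijective linear map preserving the bracket of $\wgeg$. First I would observe that $\sigma_{\mu}$ is a well-defined linear endomorphism: since $\mu\in P^{\vee}$ and every root lies in the $\ZZ$-span of the simple roots, $\la\mu,\alpha\ra\in\ZZ$ for every $\alpha\in\Delta$, so $X_{\alpha}(n)\mapsto X_{\alpha}(n+\la\mu,\alpha\ra)$ is merely a degree shift carrying $\frg\otimes\CC((\xi))$ into itself, and together with $\sigma_{\mu}(c)=c$ and $\sigma_{\mu}(h(n))=h(n)+\delta_{n,0}\la\mu,h\ra c$ this determines $\sigma_{\mu}$ unambiguously. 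Bijectivity is then clear, since $\sigma_{-\mu}$ is a two-sided inverse: on $X_{\alpha}(n)$ the shifts $\pm\la\mu,\alpha\ra$ cancel, and on $h(n)$ the central corrections $\pm\delta_{n,0}\la\mu,h\ra c$ cancel. Hence everything reduces to checking $\sigma_{\mu}([x,y])=[\sigma_{\mu}(x),\sigma_{\mu}(y)]$, and by bilinearity it suffices to take $x,y$ among $c$, $h(m)$ ($h\in\heh$) and $X_{\alpha}(m)$ ($\alpha\in\Delta$), which span $\wgeg$.

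I would first handle the cases where the central cocycle term is inert. Brackets with $c$ are trivial and $\sigma_{\mu}(c)=c$. For $[h(m),X_{\alpha}(n)]$ the cocycle vanishes as $(\heh,\frg_{\alpha})=0$, so the bracket is $\alpha(h)X_{\alpha}(m+n)$; applying $\sigma_{\mu}$ shifts the degree by $\la\mu,\alpha\ra$, and on the right-hand side the central correction to $\sigma_{\mu}(h(m))$ is inert, leaving $\alpha(h)X_{\alpha}(m+n+\la\mu,\alpha\ra)$ on both sides. For $[X_{\alpha}(m),X_{\beta}(n)]$ with $\alpha+\beta\in\Delta$ we have $\beta\neq-\alpha$, hence $(\frg_{\alpha},\frg_{\beta})=0$ and the bracket is $N_{\alpha,\beta}X_{\alpha+\beta}(m+n)$; the two sides match because $\la\mu,\alpha\ra+\la\mu,\beta\ra=\la\mu,\alpha+\beta\ra$ by linearity of $\la\mu,-\ra$. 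The subcase $\alpha+\beta\notin\Delta\cup\{0\}$ is $0=0$, and $[h(m),h'(n)]=m(h,h')\delta_{m+n,0}c$ is central and fixed by $\sigma_{\mu}$ while $\sigma_{\mu}(h(m)),\sigma_{\mu}(h'(n))$ differ from $h(m),h'(n)$ only centrally, so that case is immediate too.

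The one case requiring real care — and, I expect, the heart of the proof — is $[X_{\alpha}(m),X_{-\alpha}(n)]$, where the $\heh$-valued term and the cocycle both contribute. Using $\Res_{\xi=0}(\xi^{n}d(\xi^{m}))=m\delta_{m+n,0}$, this bracket is $H_{\alpha}(m+n)+m(X_{\alpha},X_{-\alpha})\delta_{m+n,0}c$, which $\sigma_{\mu}$ sends to $H_{\alpha}(m+n)$ if $m+n\neq0$ and to $H_{\alpha}(0)+\la\mu,H_{\alpha}\ra c+m(X_{\alpha},X_{-\alpha})c$ if $m+n=0$. On the other side, with $k=m+\la\mu,\alpha\ra$, one gets $[X_{\alpha}(k),X_{-\alpha}(n-\la\mu,\alpha\ra)]$, whose loop part is $H_{\alpha}(m+n)$ (the shifts cancel) and whose central part, when $m+n=0$, is $k(X_{\alpha},X_{-\alpha})c=(m+\la\mu,\alpha\ra)(X_{\alpha},X_{-\alpha})c$. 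Equating the central parts reduces to the identity $\la\mu,H_{\alpha}\ra=(X_{\alpha},X_{-\alpha})\la\mu,\alpha\ra$, which is precisely the Chevalley relation $H_{\alpha}=[X_{\alpha},X_{-\alpha}]=(X_{\alpha},X_{-\alpha})h_{\alpha}$ evaluated against $\mu$, using that under the normalized form $\la\mu,h_{\alpha}\ra=\alpha(\mu)=\la\mu,\alpha\ra$. Thus the degree shift concealed in the cocycle and the central correction on $\heh$ conspire exactly; once this is in hand, $\sigma_{\mu}$ is a bijective homomorphism of Lie algebras, i.e.\ an automorphism of $\wgeg$.
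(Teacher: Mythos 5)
Your proposal is correct and follows essentially the same route as the paper: the only nontrivial verification is the relation $[X_{\alpha}(m),X_{-\alpha}(n)]=H_{\alpha}(m+n)+\la X_{\alpha},X_{-\alpha}\ra m\delta_{m+n,0}c$, which you resolve exactly as the paper does, via $H_{\alpha}=\la X_{\alpha},X_{-\alpha}\ra h_{\alpha}$ and $\la\mu,h_{\alpha}\ra=\la\mu,\alpha\ra$. Your additional remarks on well-definedness and on $\sigma_{-\mu}$ being a two-sided inverse are a welcome (if routine) supplement to the paper's check, which treats only the homomorphism property.
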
 
\begin{proof} 
We only need to verify that $\sigma_{\mu}$ is a Lie algebra homomorphism. It is enough to check that $\sigma_{\mu}$ respects the following relations:
\begin{eqnarray*}
   \left[H_{\alpha_i}(m), H_{\alpha_j}(n)\right] &=& \la H_{\alpha_i}, H_{\alpha_j} \ra. m.\delta_{m+n,0}c,\\
  \left[ H_{\alpha_i}(m), X_{\alpha}(n)\right] &=& \alpha(H_{\alpha_i})X_{\alpha}(m+n),\\
 \left[ X_{\alpha}(m), X_{\beta}(n)\right]&=& N_{\alpha, \beta}X_{\alpha + \beta}(m+n) \ \ \mbox{if $\alpha + \beta \in \Delta$},\\
 \left[X_{\alpha}(m), X_{-\alpha}(n)\right]  &=& H_{\alpha}(m+n) + \la X_{\alpha}, X_{-\alpha}\ra .m\delta_{m+n,0}c.
\end{eqnarray*}
It is trivial to see that $\sigma_{\mu}$ respects the first three relations. We only need to verify that $\sigma_{\mu}$ respects the last relation. Let us calculate the following:
\begin{eqnarray*} 
\left[ \sigma_{\mu}(X_{\alpha}(m)), \sigma_{\mu}(X_{-\alpha}(n)) \right] &=& \left[ X_{\alpha}(m + \la \mu, \alpha \ra), X_{-\alpha}(n-\la \mu, \alpha \ra)\right],\\
&=&  H_{\alpha}(m+n) + \la X_{\alpha}, X_{-\alpha} \ra . (m+ \la \mu, \alpha \ra ) \delta_{m+n,0}c.
\end{eqnarray*}  
If we apply $\sigma_{\mu}$ to the right hand side of the last relation, we get the following:
\begin{eqnarray*}
 \sigma_{\mu}(H_{\alpha}(m+n) &+& \la X_{\alpha}, X_{-\alpha}\ra .m\delta_{m+n,0}c)\\
 &=& H_{\alpha}(m+n) + (\la \mu, H_\alpha \ra  + \la X_{\alpha}, X_{-\alpha}\ra .m.)\delta_{m+n,0}c, \\
 &=& H_{\alpha}(m+n) + (\la \mu, \la X_{\alpha}, X_{-\alpha}\ra h_{\alpha}\ra  + \la X_{\alpha}, X_{-\alpha}\ra .m.)\delta_{m+n,0}c,\\
 &=& H_{\alpha}(m+n) + (\la X_{\alpha}, X_{-\alpha} \ra \la \mu, h_{\alpha} \ra + \la X_{\alpha}, X_{-\alpha}\ra .m.)\delta_{m+n,0}c,\\
 &=&  H_{\alpha}(m+n) + (\la X_{\alpha}, X_{-\alpha} \ra \la \mu, \alpha \ra + \la X_{\alpha}, X_{-\alpha}\ra .m.)\delta_{m+n,0}c,\\
 &=& \left[ \sigma_{\mu}(X_{\alpha}(m)), \sigma_{\mu}(X_{-\alpha}(n)) \right].
\end{eqnarray*}
This completes the proof. 
\end{proof}
The automorphism $\sigma_{\mu}$ was studied in~\cite{GGO}, ~\cite{VLW} and ~\cite{HF} and is called a single-shift automorphism. It is easy to observe that single-shift automorphisms are additive, i.e. for $\mu_1, \mu_2 \in P^{\vee}$, we have $\sigma_{\mu_1+\mu_2}=\sigma_{\mu_1}\circ\sigma_{\mu_2}$.
\subsection{Proof of Theorem \ref{conjugation}}
Let $G$ be a simply connected simple Lie group with Lie algebra $\geg$. We assume that $G$ is classical. Let $\mu \in P^{\vee}$, consider $\tau_{\mu}= \exp(\ln{\xi}.\mu)$. It is well defined upon a choice of a branch of the complex logarithm but conjugation by $\tau_{\mu}$ is independent of the branch of the chosen logarithm. We prove the following:
\begin{prop}\label{loopaut}
For $\mu \in P^{\vee}$, the map $x\rightarrow \tau_{\mu}x\tau_{\mu}^{-1}$ defines an automorphism $\varphi_{\mu}$ of $\tilde{\geg},$ where $\geg$ is a Lie algebra of type $A_n, B_n, C_n$ or $D_n$.
\end{prop}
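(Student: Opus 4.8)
The plan is to make sense of the conjugation $x \mapsto \tau_\mu x \tau_\mu^{-1}$ concretely on the Chevalley generators of $\widetilde{\geg} = \geg \otimes \CC((\xi))$ and then observe that the resulting formulas match the definition of the single-shift automorphism $\sigma_\mu$ restricted to the loop algebra. First I would fix a Chevalley basis $\{X_\alpha, H_\alpha\}$ as in Proposition \ref{chevbas}, and recall that $\tau_\mu = \exp(\ln \xi \cdot \mu)$ is an element of (a completion of) $\geg \otimes \CC((\xi))$ acting by the adjoint representation; since $\mu \in \heh$, the operator $\operatorname{ad}(\ln\xi \cdot \mu)$ is semisimple on each root space, so $\tau_\mu$ acts diagonally in the root-space decomposition. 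Concretely, on $h \otimes \xi^n$ with $h \in \heh$ one has $[\mu, h] = 0$, so $\tau_\mu (h(n)) \tau_\mu^{-1} = h(n)$; and on $X_\alpha \otimes \xi^n$ one has $[\mu, X_\alpha] = \la \mu, \alpha\ra X_\alpha$, hence $\operatorname{Ad}(\tau_\mu)(X_\alpha \otimes \xi^n) = e^{\la \mu,\alpha\ra \ln\xi} X_\alpha \otimes \xi^n = X_\alpha \otimes \xi^{n + \la\mu,\alpha\ra}$, using $\la\mu,\alpha\ra \in \ZZ$ because $\mu$ lies in the coweight lattice. This is exactly $\sigma_\mu(X_\alpha(n))$, and likewise $\sigma_\mu(h(n)) = h(n)$ on the loop algebra (the central correction term is invisible here since we are working in $\widetilde{\geg}$, not $\wgeg$).

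The second step is to check that this assignment genuinely defines a Lie algebra automorphism of $\widetilde{\geg}$, i.e. that $\operatorname{Ad}(\tau_\mu)$ maps $\geg \otimes \CC((\xi))$ into itself. The subtlety is that $\tau_\mu$ itself does not lie in the loop group of $G$ in the naive sense (it involves $\ln \xi$), so one cannot simply invoke "conjugation by a group element." The clean way is to argue directly: $\operatorname{Ad}(\tau_\mu)$ is defined on the loop algebra by the exponential series $\sum_k \frac{(\ln\xi)^k}{k!}\operatorname{ad}(\mu)^k$, which terminates on each root space because $\operatorname{ad}(\mu)$ acts by a scalar there; reassembling, $\operatorname{Ad}(\tau_\mu)$ sends $X \otimes f(\xi)$ to a finite sum whose coefficients are again Laurent series (this is where one uses $\la\mu,\alpha\ra \in \ZZ$ — the shift $\xi^n \mapsto \xi^{n + \la\mu,\alpha\ra}$ stays within $\CC((\xi))$). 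That the map is a homomorphism is automatic since $\operatorname{Ad}$ of any invertible operator is a homomorphism on the Lie algebra it acts on; that it is bijective follows from $\operatorname{Ad}(\tau_\mu)^{-1} = \operatorname{Ad}(\tau_{-\mu}) = \operatorname{Ad}(\tau_\mu^{-1})$. I would also remark, as the paper does, that independence of the branch of $\ln\xi$ is immediate: changing the branch multiplies $\tau_\mu$ by $\exp(2\pi i k \mu)$, which is central in the relevant sense — more precisely $\exp(2\pi i\mu) \in Z(G)$ or at any rate a constant loop commuting appropriately — so conjugation is unaffected; in fact it is cleaner to note $\la\mu,\alpha\ra\in\ZZ$ makes $e^{2\pi i\la\mu,\alpha\ra}=1$ so the ambiguity literally cancels in the formula $\xi^{n+\la\mu,\alpha\ra}$.

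The main obstacle I anticipate is the bookkeeping around $\tau_\mu$ not being an honest element of $G \otimes \CC((\xi))$: one must be careful to interpret $\operatorname{Ad}(\tau_\mu)$ as a well-defined operator on $\widetilde{\geg}$ rather than literally conjugating inside a group, and this is presumably why the proposition is stated for the classical types $A_n, B_n, C_n, D_n$ — there one has an explicit matrix realization, $\geg \subset \mathfrak{gl}(N)$, in which $\tau_\mu = \operatorname{diag}(\xi^{m_1}, \dots, \xi^{m_N})$ for suitable integers $m_i$ determined by $\mu$ acting on the defining representation, and then $\tau_\mu x \tau_\mu^{-1}$ is a completely literal matrix conjugation landing in $\geg \otimes \CC((\xi))$, with entrywise scaling $\xi^{m_i - m_j}$ that one reads off against the root-space decomposition. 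So in the proof I would: (i) pass to the defining representation of the classical group, write $\mu$ as a diagonal integer matrix $M$, (ii) compute $\tau_\mu x \tau_\mu^{-1}$ entrywise and check it preserves $\geg\otimes\CC((\xi))$ (using that $\mu\in P^\vee$ guarantees the exponents $m_i-m_j$ are integers and that the bilinear form defining $\geg$ is preserved since $\tau_\mu \in G\otimes\CC((\xi))$ up to the harmless $\ln\xi$ issue), (iii) match the entrywise scaling with the root data to recover $X_\alpha(n) \mapsto X_\alpha(n + \la\mu,\alpha\ra)$ and $h(n)\mapsto h(n)$, thereby identifying $\varphi_\mu$ with $\sigma_\mu|_{\geg\otimes\CC((\xi))}$. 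Everything else is routine verification.
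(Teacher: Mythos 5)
Your proof is correct and, in its final step, coincides with the paper's: the paper's proof is exactly your item (i)--(iii), namely reduce by additivity ($\tau_{\mu_1+\mu_2}=\tau_{\mu_1}\tau_{\mu_2}$) to fundamental coweights, realize $\tau_\mu$ as a diagonal matrix of powers of $\xi$ in the defining representation, and check entrywise that conjugating $E_{a,b}(n)$ only shifts the exponent by an integer (``no fractional powers''), type by type. What you add is a cleaner, type-independent overlay: defining $\mathrm{Ad}(\tau_\mu)=\exp(\mathrm{ad}(\ln\xi\cdot\mu))$, which acts by the scalar $\xi^{\la\mu,\alpha\ra}$ on $\geg_\alpha\otimes\CC((\xi))$ and trivially on $\heh\otimes\CC((\xi))$, so that integrality of $\la\mu,\alpha\ra$ for $\mu\in P^\vee$ does all the work at once; this also makes the homomorphism property, invertibility, and branch-independence transparent, and in fact removes any real need for the classical-type hypothesis (the paper keeps it only to have the concrete matrix model). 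One misstatement to fix: you write $\tau_\mu=\operatorname{diag}(\xi^{m_1},\dots,\xi^{m_N})$ ``for suitable integers $m_i$,'' but for $\mu\in P^\vee\setminus Q^\vee$ the eigenvalues of $\mu$ on the defining representation are generally \emph{not} integers --- e.g.\ for $\mathfrak{sl}(n)$ the fundamental coweight $X_i$ has diagonal entries $1-\tfrac{i}{n}$ and $-\tfrac{i}{n}$, so $\tau_{X_i}$ genuinely involves fractional powers of $\xi$ (this is precisely why $\varphi_\mu$ fails to be inner for $\mu\notin Q^\vee$, as the paper remarks). Your argument survives because, as you correctly note in the same sentence, only the differences $m_i-m_j=\la\mu,\alpha\ra$ enter the conjugation and those are integers; just delete the claim that the $m_i$ themselves are integral.
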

 \begin{proof}
 
We only need to show that is that $\tau_{\mu}x \tau_{\mu}^{-1}$ has no fractional powers. Let $\mu_1$ and $\mu_2 \in P^{\vee}$, we first observe that $\tau_{\mu_1+\mu_2}= \tau_{\mu_1}\circ\tau_{\mu_2}$. Thus  it is enough to verify the claim for the fundamental coweights. For any classical Lie algebra, the roots spaces of $\geg$  are integral linear combinations of the matrix $E_{i,j},$ where $E_{i,j}$ is a matrix with $1$ at the $(i,j)$ entry and zero every where else. Therefore it is enough to show that $\tau_{\mu}E_{ij}(a)\tau_{\mu}^{-1}$ is of the form $E_{ij}(b)$, where $a$ and $b$ are integers. The rest of the proof is by direct computation.

Consider the case when $\geg$ is of type $A_n$. A basis for the coweight lattice is given by $X_i=\sum_{k=1}^i E_{k,k}- \frac{i}{n}\sum_{k=1}^n E_{k,k}$. 
In matrix notation, we have following:
$$\tau_{X_i}=\begin{bmatrix}
\xi^{(1-\frac{i}{n})} &0&  0   &\cdots &0 &0  & 0 \\
0 & \xi^{(1-\frac{i}{n})}&0  & \cdots&0 &0 & 0\\
0& 0&  \xi^{(1-\frac{i}{n})}& \cdots &0 &0 &0 \\
\vdots & \vdots & \vdots & \ddots &\vdots  &\vdots & \vdots \\
0      & 0      & 0      & \xi^{-\frac{i}{n}} &0 &0 & 0 \\
0& 0      & 0      & 0      & \xi^{-\frac{i}{n}} & 0 &0 \\
0 &0 & 0      & 0      & 0      & \xi^{-\frac{i}{n}}&0 \\
0& 0 &0 & 0      & 0      & 0      & \xi^{-\frac{i}{n}}.
\end{bmatrix}
$$ Thus for $a<b$ we get the following:

\[ \tau_{X_i}E_{a,b}(n)\tau_{X_i}^{-1}= \left \{ \begin{array}{ll}
                                       E_{a,b}(n) & \mbox{ if both $a$ and $b$ are less or greater than $i$}.\\
                                       E_{a,b}(n+1)  & \mbox{ if $a \leq i < b$}.                                                                          
\end{array} \right. \] The proof for $B_n$, $C_n$ and $D_n$ follows from a similar calculation.
\end{proof}
\begin{Rem}
 For $\mu \notin Q^{\vee}$, then it is clear that $\varphi_{\mu}$ is not a conjugation by a element of the loop group $G(\CC((\xi)))$.
\end{Rem}
The following Lemma compares the single-shift automorphisms with the automorphism $\varphi_{\mu}$ defined above. The proof follows directly by a easy computation.
\begin{Lemma}\label{loopeq}
For $\mu \in P^{\vee}$, the single-shift automorphism $\sigma_{\mu}$ on the loop algebra $\tilde{\geg}$ is same as $\varphi_{\mu}$
 \end{Lemma}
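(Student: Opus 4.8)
The plan is to verify the Lemma by a direct comparison of the two automorphisms on generators of the loop algebra $\tilde{\geg}=\frg\otimes\CC((\xi))$, using the explicit formula for $\sigma_\mu$ given above and the conjugation description of $\varphi_\mu$ from the proof of Proposition~\ref{loopaut}. Since both $\sigma_\mu$ and $\varphi_\mu$ are Lie algebra homomorphisms, and since they are each additive in $\mu$ (for $\sigma_\mu$ this was noted after its definition; for $\varphi_\mu$ we observed $\tau_{\mu_1+\mu_2}=\tau_{\mu_1}\circ\tau_{\mu_2}$ in the proof of Proposition~\ref{loopaut}), it suffices to check the equality on a Chevalley-type generating set, and it is harmless to restrict attention to fundamental coweights.

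First I would fix a Chevalley basis $\{X_\alpha, H_\alpha\}$ and evaluate $\varphi_\mu$ on the root vectors. Writing $\tau_\mu=\exp(\ln\xi\,.\,\mu)$ and using $[\mu, X_\alpha]=\langle\mu,\alpha\rangle X_\alpha$ (here $\mu\in\heh$ acts on $\geg_\alpha$ by the scalar $\alpha(\mu)=\langle\mu,\alpha\rangle$), one gets $\tau_\mu X_\alpha(n)\tau_\mu^{-1}=\exp(\ln\xi\,\langle\mu,\alpha\rangle)\,X_\alpha(n)=\xi^{\langle\mu,\alpha\rangle}X_\alpha(n)=X_\alpha(n+\langle\mu,\alpha\rangle)$. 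This is exactly $\sigma_\mu(X_\alpha(n))$. Next, on the Cartan part $h(n)$ with $h\in\heh$: since $\tau_\mu$ commutes with $\heh\otimes\CC((\xi))$, conjugation fixes $h(n)$, which matches $\sigma_\mu$ \emph{restricted to the loop algebra} — the discrepancy $\delta_{n,0}\langle\mu,h\rangle c$ in the formula for $\sigma_\mu$ lives in the central extension and does not appear at the level of $\tilde\geg$ (indeed $\sigma_\mu$ is, up to this central correction, just conjugation by $\tau_\mu$, which is the whole point of Theorem~\ref{conjugation}). So on all generators the two maps agree, and by the homomorphism property they agree everywhere on $\tilde\geg$.

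The only genuinely delicate point is bookkeeping about where each map is defined and what "same" means: $\sigma_\mu$ as written is an automorphism of $\wgeg$, while $\varphi_\mu$ is defined on $\tilde\geg$ only. The clean statement is that $\varphi_\mu$ equals the composite of $\sigma_\mu$ with the quotient $\wgeg\to\tilde\geg$ — equivalently, $\sigma_\mu$ and (any lift of) $\varphi_\mu$ have the same underlying action on $\frg\otimes\CC((\xi))$. I expect this identification-of-domains remark to be the main thing to state carefully; the computational content is the one-line conjugation calculation $\tau_\mu X_\alpha(n)\tau_\mu^{-1}=X_\alpha(n+\langle\mu,\alpha\rangle)$ together with the observation that the classical-type verification in Proposition~\ref{loopaut} (via the matrix units $E_{ij}$) produces precisely the shift $n\mapsto n+\langle\mu,\alpha\rangle$ on each root space. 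No deeper obstacle arises, which is why the paper rightly calls this "an easy computation."
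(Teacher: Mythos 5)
Your proposal is correct and follows essentially the same route as the paper: reduce to fundamental coweights by additivity, observe that both maps act identically on $\heh\otimes\CC((\xi))$ (the central term $\delta_{n,0}\la\mu,h\ra c$ vanishing in the loop algebra), and verify the equality on root spaces. The one point where you improve on the paper is that your root-space check via $\tau_{\mu}X_{\alpha}(n)\tau_{\mu}^{-1}=\exp\bigl(\operatorname{ad}(\ln\xi\cdot\mu)\bigr)X_{\alpha}(n)=\xi^{\la\mu,\alpha\ra}X_{\alpha}(n)=X_{\alpha}(n+\la\mu,\alpha\ra)$ is uniform in type, whereas the paper defers to the case-by-case matrix-unit computation with the $E_{ij}$ from Proposition~\ref{loopaut}.
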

 
 \begin{proof}
 Since both the automorphism $\sigma_{\mu}$ and $\varphi_{\mu}$ are additive, it is enough to prove the above lemma for the simple coweights. It is easy to see that $\sigma_{\mu}$ and $\varphi_{\mu}$ are coincide on $\heh\otimes(\CC((\xi)))$. We just need to check for the equality on the non zero root spaces. This is an easy type dependent argument and follows from direct calculation.
 \end{proof}
 
 
We use Theorem ~\ref{conjugation} to prove the following for classical Lie algebras:
\begin{cor}\label{inner} 
If $\mu \in Q^{\vee}$ then $\sigma_{\mu}$ is an inner automorphism of $\wgeg$.
\end{cor}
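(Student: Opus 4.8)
The plan is to combine Theorem \ref{conjugation} with the observation that, for $\mu \in Q^{\vee}$, the element $\tau_{\mu} = \exp(\ln\xi \cdot \mu)$ actually defines a genuine element of the loop group $G(\CC((\xi)))$ (not merely a formal conjugation), and that conjugation by a loop-group element is by definition an inner automorphism of the affine Kac-Moody group, hence induces an inner automorphism on $\wgeg$.

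First I would recall that $Q^{\vee}$ is spanned by the simple coroots $\alpha_i^{\vee} = H_{\alpha_i}$, and that for a classical $\geg$ each $H_{\alpha_i}$ exponentiates: the one-parameter subgroup $t \mapsto \exp(t H_{\alpha_i})$ is a cocharacter $\mathbb{G}_m \to G$, i.e. a homomorphism defined over $\CC$. Therefore, plugging in $t = \ln\xi$ is a red herring — the right statement is that $\xi \mapsto (\xi)^{H_{\alpha_i}}$, meaning the image of $\xi \in \CC((\xi))^{\times}$ under the cocharacter $\mathbb{G}_m \to G$ extended to $\CC((\xi))$-points, is a well-defined element $\tau_{\alpha_i^\vee} \in G(\CC((\xi)))$. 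Since a general $\mu \in Q^{\vee}$ is an integral combination of the $\alpha_i^\vee$ and $\tau_{\mu_1+\mu_2} = \tau_{\mu_1}\tau_{\mu_2}$ (noted already in the proof of Proposition \ref{loopaut}), we get $\tau_\mu \in G(\CC((\xi)))$ for every $\mu \in Q^{\vee}$.

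Next I would invoke Theorem \ref{conjugation}: the automorphism $\varphi_\mu \colon x \mapsto \tau_\mu x \tau_\mu^{-1}$ of $\frg \otimes \CC((\xi))$ agrees with the single-shift automorphism $\sigma_\mu$ restricted to the loop algebra, and by the Corollary following Theorem \ref{conjugation}, $\sigma_\mu$ is the unique extension of $\varphi_\mu$ to the universal central extension $\wgeg$. Now conjugation by the loop-group element $\tau_\mu \in G(\CC((\xi)))$ lifts canonically to the central extension $\widehat{G}$ of the loop group (the affine Kac-Moody group), acting on $\wgeg$ by the adjoint action $\operatorname{Ad}(\tilde{\tau}_\mu)$; this adjoint action fixes the center $\CC c$ and restricts on $\frg \otimes \CC((\xi))$ to $\varphi_\mu$. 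By uniqueness of the extension, $\operatorname{Ad}(\tilde\tau_\mu) = \sigma_\mu$ on all of $\wgeg$, which exhibits $\sigma_\mu$ as an inner automorphism.

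The main obstacle — and the point that needs care — is making precise that $\tau_\mu$ for $\mu \in Q^{\vee}$ really is a $\CC((\xi))$-point of $G$ and that conjugation by it lifts to the central extension compatibly with $\varphi_\mu$. The first part is exactly the content of the computation in the proof of Proposition \ref{loopaut} read the right way: there $\tau_{X_i}$ has entries with fractional powers of $\xi$ (so $X_i \notin Q^\vee$ in type $A$), but for $\mu \in Q^\vee$ all exponents become integers, so $\tau_\mu$ is an honest invertible matrix over $\CC((\xi))$ lying in $G(\CC((\xi)))$ — one checks it preserves the relevant bilinear form in types $B,C,D$ since it is a product of the $\tau_{\alpha_i^\vee}$. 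For the lift to the central extension, I would cite the standard fact (e.g.\ the construction of the affine Kac-Moody group and its adjoint action) that inner automorphisms of $\wgeg$ are precisely those induced by $\widehat{G}$, and that $\operatorname{Ad}(\tilde\tau_\mu)$ restricted to the loop algebra is independent of the choice of lift $\tilde\tau_\mu$ and equals conjugation by $\tau_\mu$; combined with the uniqueness corollary this finishes the proof.
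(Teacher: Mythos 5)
Your proof is correct and follows essentially the same route as the paper: observe that $\tau_{\mu}\in G(\CC((\xi)))$ when $\mu\in Q^{\vee}$, identify $\varphi_{\mu}=\operatorname{Ad}(\tau_{\mu})$ with $\sigma_{\mu}$ on the loop algebra, lift $\tau_{\mu}$ to the Kac--Moody central extension, and use universality of the central extension $0\to\CC c\to\wgeg\to\tilde{\geg}\to 0$ to conclude $\sigma_{\mu}=\operatorname{Ad}(\tilde{\tau}_{\mu})$. Your added justification that $\tau_{\mu}$ is a genuine $\CC((\xi))$-point via cocharacters attached to the simple coroots is a welcome elaboration of a step the paper merely asserts.
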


\begin{proof}
First we observe that when $\mu \in Q^{\vee}$, the element $\tau_{\mu} \in G(\mathbb{C}((\xi)))$. Since $\tau_{\mu} \in G(\CC((\xi)))$, the map $\varphi_{\mu}$ can be realized as the adjoint action $\text{Ad}(\tau_{\mu})$ on $\tilde{\geg}$. By lemma ~\ref{loopeq}, we conclude that $\sigma_{\mu}$ is an inner automorphism of $\tilde{\geg}$

Let us denote the above conjugation on $G(\CC((\xi)))$ by $c(\tau_{\mu})$. We know that there is a central extension of the form
$$0 \rightarrow \CC^* \rightarrow \mathcal{G} \rightarrow G(\CC((\xi))) \rightarrow 0,$$ where $\mathcal{G}$ is a Kac-Moody group. We can take a any lift $\tilde{\tau}_{\mu}$ of $\tau_{\mu}$ in $\mathcal{G}$ and consider conjugation by $(\tilde{\tau}_\mu)$ on $\mathcal{G}$. The conjugation is well defined and independent of the lift. Since the extension $0 \rightarrow \CC c\rightarrow \wgeg \rightarrow \tilde{\geg} \rightarrow 0 $ is universal and central, it follows $\sigma_{\mu}$ is of the form $\text{Ad}(\tilde{\tau}_{\mu})$. This completes the proof.
\end{proof}

\section{Extension of single-shift automorphisms} 
 Let $\geg_1$, $\geg_2$ and $\geg$ be simple Lie algebras and $\heh_1$, $\heh_2$ and $\heh$ be their Cartan subalgebras. For $i \in \{1,2\}$, let $\phi_i:\geg_i\rightarrow \geg$ be an embedding of Lie algebras such that $\phi_i(\heh_i)\subset \heh$. Further let us denote the normalized Cartan Killing form on $\geg_1$, $\geg_2$ and $\geg$ by $\la \ , \ \ra_{\geg_1}$, $\la \ , \ \ra_{\geg_2}$, $\la \ , \mbox{and}  \ \ra_{\geg}$ respectively, and let $(\ell_1, \ell_2)$ be the Dynkin multi-index of the embedding $\phi=(\phi_1, \phi_2)$. We can extend the map $\phi=(\phi_1,\phi_2)$ to a map $\widehat{\phi}$ of $\widehat{\geg}_1\oplus \widehat{\geg}_2 \rightarrow \widehat{\geg}$ as follows:
\begin{eqnarray*}
\widehat{\phi_1}: \wgeg_1 & \rightarrow &  \wgeg, \\
X\otimes f + a.c &\rightarrow & \phi_1(X)\otimes f + a.\ell_1.c,
\end{eqnarray*}where $X \in \geg_1$, $f \in \CC((\xi))$ and $a$ is a constant. We similarly map 
\begin{eqnarray*}
\widehat{\phi_2}: \wgeg_2 & \rightarrow & \wgeg, \\
Y\otimes g + b.c & \rightarrow & \phi_2(Y)\otimes g + b.\ell_2.c, 
\end{eqnarray*} where $Y \in \geg_2$, $g \in \CC((\xi))$ and $b$ is a constant. We define $\widehat{\phi}$ to be $\widehat{\phi}_1+ \widehat{\phi}_2$. Consider an element $\mu \in P_1^{\vee}$ such that $\tilde{\mu}=\phi_1(\mu) \in P^{\vee}$, where $P_1^{\vee}$ and $P$ denote the coweight lattices of $\geg_1$ and $\geg$ respectively.

Let $\alpha$ be a root in $\geg_1$ with respect to a Cartan subalgebra $\heh_1$ and $X_{\alpha}$ be a non-zero element of $\frg_1$ in the root space ${\alpha}$. If $\phi_1(X_{\alpha})= \sum_{i=1}^{\dim{\heh}}a_ih_i + \sum_{\gamma \in I_{\alpha}}a_{\gamma}X_{\gamma}$, where $h_i$'s be any basis of $\heh$. We have the following lemma:

\begin{Lemma}\label{case1} 
For all $i$, we claim that $a_i=0$.

\end{Lemma}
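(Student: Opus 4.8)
The plan is to exploit the fact that $\phi_1$ is a Lie algebra homomorphism compatible with the chosen Cartan subalgebras, so that root spaces map to sums of root spaces, and to show the ``Cartan part'' of the image of a root vector must vanish. First I would recall that $\phi_1(\heh_1) \subset \heh$, hence for any $h \in \heh_1$ we have $[\phi_1(h), \phi_1(X_\alpha)] = \phi_1([h,X_\alpha]) = \alpha(h)\,\phi_1(X_\alpha)$. Writing $\phi_1(X_\alpha) = \sum_i a_i h_i + \sum_{\gamma \in I_\alpha} a_\gamma X_\gamma$ with $h_i$ a basis of $\heh$ and $X_\gamma$ root vectors of $(\geg,\heh)$, and applying $\mathrm{ad}(\phi_1(h))$, the Cartan part $\sum_i a_i h_i$ is annihilated (since $[\heh,\heh]=0$) while each $X_\gamma$ is scaled by $\gamma(\phi_1(h))$. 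Comparing with $\alpha(h)\phi_1(X_\alpha)$, the Cartan component of the left side is $0$ and of the right side is $\alpha(h)\sum_i a_i h_i$.

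Next I would choose $h \in \heh_1$ with $\alpha(h) \neq 0$ — possible because $\alpha$ is a nonzero root, so it does not vanish identically on $\heh_1$. Then $\alpha(h)\sum_i a_i h_i = 0$ forces $\sum_i a_i h_i = 0$, and since the $h_i$ form a basis of $\heh$ they are linearly independent, giving $a_i = 0$ for all $i$. This is the whole argument; it is essentially a weight-space decomposition observation.

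The only point requiring a word of care — and the step I would flag as the ``main obstacle,'' though it is minor — is justifying that each $X_\gamma$ appearing really is a root vector for a \emph{nonzero} root $\gamma$, i.e. that in the decomposition of $\phi_1(X_\alpha)$ into $\heh$-weight spaces the weight-zero part is precisely $\sum_i a_i h_i$ and cannot secretly absorb extra structure; this is immediate from $\geg = \heh \oplus \bigoplus_{\gamma \in \Delta} \geg_\gamma$ being the $\heh$-weight decomposition, with $\gamma \neq 0$ for all $\gamma \in \Delta$. Once that is in place the computation above is forced. I would therefore present the proof as: (1) apply $\mathrm{ad}(\phi_1(h))$ to the decomposition of $\phi_1(X_\alpha)$; (2) pick $h$ with $\alpha(h)\neq0$; (3) read off $a_i=0$ from linear independence of the $h_i$.
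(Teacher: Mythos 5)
Your proof is correct and is essentially the same as the paper's: both compute $[\phi_1(h),\phi_1(X_\alpha)]$ once via $\phi_1([h,X_\alpha])=\alpha(h)\phi_1(X_\alpha)$ and once via the weight decomposition in $\geg$, then compare Cartan components. You make explicit the step of choosing $h\in\heh_1$ with $\alpha(h)\neq 0$, which the paper leaves implicit; otherwise the arguments coincide.
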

\begin{proof}
For any element $h \in \heh_1$, we consider the following Lie bracket. 
\begin{eqnarray*}
\left[\phi(h), \phi(X_{\alpha}\right)]&=& \phi(\left[ h , X_{\alpha}\right]),\\
&=& \phi(\alpha(h)X_{\alpha}),\\
&=& \sum_{i=1}^{\dim{\heh}}a_i\alpha(h)h_i+ \sum_{\gamma \in I_{\alpha}}a_{\gamma}\alpha(h)X_{\gamma}.
\end{eqnarray*}
On the other hand $\left[\phi(h), \phi(X_{\alpha})\right]= \sum_{\gamma \in I_{\alpha}}a_{\gamma}\gamma(\phi(h))X_{\gamma}$. Comparing, we see that $a_i=0$ for all $i$ and $\gamma(\phi(h))=\alpha(h)$ for all $h$ in $\heh_1$. 
\end{proof}

Next, we prove the following lemma: 
\begin{Lemma}\label{case2}
If $\gamma \in I_{\alpha}$, then for all $h_2 \in \heh_2$ $$\gamma(\phi(h_2))=0.$$ 
\end{Lemma}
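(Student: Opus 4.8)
The plan is to exploit the setup of Lemma~\ref{case1}: we are decomposing $\phi_1(X_\alpha)$ along the Cartan $\heh$ of $\geg$ and its root spaces, and Lemma~\ref{case1} has already shown the Cartan component vanishes, so $\phi_1(X_\alpha)=\sum_{\gamma\in I_\alpha}a_\gamma X_\gamma$ with each $a_\gamma\neq 0$, together with the crucial identity $\gamma(\phi_1(h))=\alpha(h)$ for all $h\in\heh_1$ and all $\gamma\in I_\alpha$. The point of the present lemma is the complementary statement for $\heh_2$, and the natural tool is again a weight/bracket computation, this time using that $\phi_1(\geg_1)$ and $\phi_2(\geg_2)$ commute inside $\geg$.

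First I would record the key structural fact: since $\phi=(\phi_1,\phi_2)$ comes from an embedding $\geg_1\oplus\geg_2\hookrightarrow\geg$, the images $\phi_1(\geg_1)$ and $\phi_2(\geg_2)$ centralize each other in $\geg$; in particular $[\phi_2(h_2),\phi_1(X_\alpha)]=\phi([h_2\ (\text{as an element of }\geg_2),X_\alpha\ (\text{as an element of }\geg_1)])=0$ for every $h_2\in\heh_2$, because $h_2$ and $X_\alpha$ lie in the two commuting summands. Then I would compute the same bracket using the root-space decomposition of $\phi_1(X_\alpha)$:
\[
0=[\phi_2(h_2),\phi_1(X_\alpha)]=\Bigl[\phi_2(h_2),\ \sum_{\gamma\in I_\alpha}a_\gamma X_\gamma\Bigr]=\sum_{\gamma\in I_\alpha}a_\gamma\,\gamma(\phi_2(h_2))\,X_\gamma,
\]
where I have used $\phi_2(h_2)=\phi(h_2)\in\heh$ (valid since $\phi_i(\heh_i)\subset\heh$, so $\phi(\heh_2)\subset\heh$) and the fact that $[\heh,\geg_\gamma]$ acts on $\geg_\gamma$ by the scalar $\gamma$. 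Since the $X_\gamma$ for distinct $\gamma\in I_\alpha$ are linearly independent (they lie in distinct root spaces of $\geg$) and each $a_\gamma\neq 0$, every coefficient must vanish, giving $\gamma(\phi(h_2))=0$ for all $\gamma\in I_\alpha$ and all $h_2\in\heh_2$, which is exactly the claim.

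The only subtlety — and the step I would treat most carefully — is justifying that the two images commute and hence that the bracket $[\phi_2(h_2),\phi_1(X_\alpha)]$ vanishes; this is where the hypothesis that $\phi$ genuinely comes from $\geg_1\oplus\geg_2$ (as opposed to two unrelated embeddings $\phi_1,\phi_2$) is used, and one should note that in the excerpt $\phi=(\phi_1,\phi_2)$ is precisely such a direct-sum map with a Dynkin multi-index, so $\phi([h_2,X_\alpha])=\phi(0)=0$ because $h_2\in\heh_2\subset\geg_2$ and $X_\alpha\in\geg_1$ sit in different summands of $\geg_1\oplus\geg_2$. Everything else is the routine linear-independence-of-root-vectors argument already used in Lemma~\ref{case1}, so no further obstacle is expected.
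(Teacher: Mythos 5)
Your proof is correct and follows essentially the same route as the paper: both compute $[\phi(h_2),\phi(X_\alpha)]$ in two ways, observing it equals $\phi([h_2,X_\alpha])=0$ because $h_2$ and $X_\alpha$ lie in the two commuting summands, and then use linear independence of the root vectors $X_\gamma$ to conclude each coefficient $a_\gamma\,\gamma(\phi(h_2))$ vanishes. The extra care you take in justifying why the bracket is zero is exactly the (implicit) content of the paper's first line.
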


\begin{proof}
 For $h_2 \in \heh_2$, we have the following:
\begin{eqnarray*}
\phi\left[ h_2, X_{\alpha}\right]&=& \left[\phi(h_2), \phi(X_{\alpha})\right],\\
&=& \sum_{\gamma \in I_{\alpha}} a_{\gamma}\gamma(\phi(h_2))X_{\gamma},\\
&=& 0.
\end{eqnarray*}
Thus comparing, we get $\gamma(\phi(h_2))=0$ for all $h_2 \in \heh_2$.
\end{proof}

The following proposition is about the extension of single-shift automorphisms:
\begin{prop}{\label{key}} The single-shift automorphism $\sigma_{\tilde{\mu}}$ restricted to $\widehat{\phi_1}(\wgeg_1)$ is the automorphism $\sigma_{\mu}$. Moreover $\sigma_{\tilde{\mu}}$ restricts to identity on $\widehat{\phi_2}(\wgeg_2)$.
\end{prop}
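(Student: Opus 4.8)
The plan is to verify the two claims on generators of $\wgeg_1$ and $\wgeg_2$, using the structural information about $\phi$ gathered in Lemmas \ref{case1} and \ref{case2}. Since $\sigma_{\mu}$ and $\sigma_{\tilde\mu}$ are determined by their action on the Chevalley-type generators $h(n)$, $X_\alpha(n)$ and on $c$, it suffices to check the equalities $\sigma_{\tilde\mu}\circ\widehat{\phi_1} = \widehat{\phi_1}\circ\sigma_{\mu}$ and $\sigma_{\tilde\mu}\circ\widehat{\phi_2}=\widehat{\phi_2}$ on each of these. The element $c\in\wgeg_1$ maps to $\ell_1 c$, which is fixed by $\sigma_{\tilde\mu}$, and likewise for $\wgeg_2$; so the central part is immediate and I would dispose of it first.

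Next I would handle the Cartan part. For $h\in\heh_1$ and $n\in\ZZ$, $\widehat{\phi_1}(h(n)) = \phi(h)(n)$, and $\sigma_{\tilde\mu}(\phi(h)(n)) = \phi(h)(n) + \delta_{n,0}\la\tilde\mu,\phi(h)\ra c$. On the other side, $\widehat{\phi_1}(\sigma_\mu(h(n))) = \widehat{\phi_1}(h(n) + \delta_{n,0}\la\mu,h\ra c) = \phi(h)(n) + \delta_{n,0}\ell_1\la\mu,h\ra c$. So the Cartan case reduces to the identity $\la\phi(\mu),\phi(h)\ra_{\geg} = \ell_1\la\mu,h\ra_{\geg_1}$, which is exactly the defining property of the Dynkin index $\ell_1$ of the embedding $\phi_1$ (the pullback of the normalized form on $\geg$ is $\ell_1$ times the normalized form on $\geg_1$); here I use $\mu\in P_1^\vee\subset\heh_1$ so that $\la\mu,\cdot\ra$ makes sense. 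For $\wgeg_2$, $\widehat{\phi_2}(h_2(n)) = \phi(h_2)(n)$ and I must show $\sigma_{\tilde\mu}(\phi(h_2)(n)) = \phi(h_2)(n)$, i.e. $\la\tilde\mu,\phi(h_2)\ra = 0$; since $\tilde\mu = \phi(\mu)$ with $\mu\in\heh_1$, this is $\la\phi(\mu),\phi(h_2)\ra = \ell$-multiple of $\la\mu,h_2\ra_{?}$ — more directly, $\phi(\heh_1)$ and $\phi(\heh_2)$ are orthogonal under the normalized form because $\phi_1(\geg_1)$ and $\phi_2(\geg_2)$ commute inside $\geg$, forcing the form to vanish between them; this gives the vanishing.

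Finally the root-space part, which is the crux. Write $\phi(X_\alpha) = \sum_{\gamma\in I_\alpha}a_\gamma X_\gamma$ (no Cartan term, by Lemma \ref{case1}), so $\widehat{\phi_1}(X_\alpha(n)) = \sum_\gamma a_\gamma X_\gamma(n)$. Then $\sigma_{\tilde\mu}$ shifts each $X_\gamma(n)$ to $X_\gamma(n+\la\tilde\mu,\gamma\ra)$, whereas $\widehat{\phi_1}(\sigma_\mu(X_\alpha(n))) = \widehat{\phi_1}(X_\alpha(n+\la\mu,\alpha\ra)) = \sum_\gamma a_\gamma X_\gamma(n+\la\mu,\alpha\ra)$. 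So I need $\la\tilde\mu,\gamma\ra = \la\mu,\alpha\ra$ for every $\gamma\in I_\alpha$; but by the proof of Lemma \ref{case1} we have $\gamma(\phi(h)) = \alpha(h)$ for all $h\in\heh_1$, equivalently $\gamma\circ\phi|_{\heh_1} = \alpha$ as functionals on $\heh_1$, hence evaluating at $\mu\in\heh_1$ gives $\gamma(\tilde\mu) = \gamma(\phi(\mu)) = \alpha(\mu) = \la\mu,\alpha\ra$ and $\gamma(\tilde\mu) = \la\tilde\mu,\gamma\ra$ under the identifications; this is precisely the needed shift equality. For $\wgeg_2$, the same computation with $X_\beta$ a root vector of $\geg_2$ and $\phi(X_\beta) = \sum_{\gamma\in I_\beta} b_\gamma X_\gamma$ gives $\sigma_{\tilde\mu}(X_\gamma(n)) = X_\gamma(n+\la\tilde\mu,\gamma\ra)$, and by Lemma \ref{case2} applied with the roles adjusted — $\gamma(\phi(h_1)) = 0$ for all $h_1\in\heh_1$ when $\gamma\in I_\beta$ — we get $\la\tilde\mu,\gamma\ra = \gamma(\phi(\mu)) = 0$, so $\sigma_{\tilde\mu}$ fixes $\widehat{\phi_2}(X_\beta(n))$, as required. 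The main obstacle is keeping the identifications $\heh\cong\heh^*$ via the normalized forms consistent across $\geg_1,\geg_2,\geg$ (the Dynkin-index rescaling) so that the pairings $\la\tilde\mu,\gamma\ra$ really do collapse to $\la\mu,\alpha\ra$; once that bookkeeping is pinned down via Lemmas \ref{case1} and \ref{case2}, everything is a direct check on generators.
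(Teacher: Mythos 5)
Your proof is correct and follows essentially the same route as the paper: the same reduction to the four generator identities (central, Cartan, and root-space parts for each factor), the same use of Lemmas \ref{case1} and \ref{case2} for the root-space shifts, and the same Dynkin-index identity $\la\phi_1(\mu),\phi_1(h)\ra_{\geg}=\ell_1\la\mu,h\ra_{\geg_1}$ for the Cartan part of $\wgeg_1$. The only (valid) variation is that you deduce the orthogonality of $\phi_1(\heh_1)$ and $\phi_2(\heh_2)$ from invariance of the form together with the fact that the two images commute, whereas the paper establishes this by an explicit root-vector computation in a separate lemma.
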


\begin{proof} Let $n$ be an integer and $h$, $h_1$ be elements of $\heh_1$ and $\heh_2$. We need to show the following identities:
$$\sigma_{\tilde{\mu}}(\widehat{\phi_1}(h_1(n)))=\widehat{\phi_1}(\sigma_{\mu}(h_1(n))), $$
$$\sigma_{\tilde{\mu}}(\widehat{\phi_1}(X_{\alpha}(n)))=\widehat{\phi_1}(\sigma_{\mu}(X_{\alpha}(n))),$$
$$\sigma_{\tilde{\mu}}(\widehat{\phi_2}(h_2(n)))=\widehat{\phi_2}((h_2(n))),$$
$$\sigma_{\tilde{\mu}}(\widehat{\phi_2}(X_{\beta}(n)))=\widehat{\phi_2}((X_{\beta}(n))),$$ 
where $\alpha$, $\beta$ are roots of $\geg_1$, $\geg_2$ respectively, and $X_{\alpha}$, $X_{\beta}$ are non-zero elements in the root space of $\alpha$, $\beta$ respectively. Let $h \in \heh_1$, we have the following:
\begin{eqnarray*}
\sigma_{\tilde{\mu}}(\widehat{\phi_1}(h(n)))&=& \widehat{\phi_1}(h(n))+ \delta_{n,0}.\la \tilde{\mu}, \phi_1(h)\ra_{\geg} .c,\\
&=& \widehat{\phi_1}(h(n)) + \delta_{n,0} .\la \phi_1(\mu), \phi_1(h)\ra_{\geg}.c,\\
&=& \widehat{\phi_1}(h(n)) + \delta_{n,0}.\ell_1.\la \mu, h \ra_{\geg_1}.c,\\
&=& \widehat{\phi_1}(\sigma_{\mu}(h(n))).
\end{eqnarray*}
This completes the proof of the first identity. For the second identity, we use Lemma ~\ref{case1}. For any non-zero element $X_{\alpha}$ in the root space of $\alpha$, consider the following:
\begin{eqnarray*}
\widehat{\phi_1}(\sigma_{\mu}(X_{\alpha}(n)))&=& \widehat{\phi_1}(X_{\alpha}(n + \alpha(\mu))),\\
&=& \sum_{\gamma \in I_{\alpha}}a_{\gamma}X_{\gamma}(n+ \alpha(\mu)),\\
&=& \sum_{\gamma \in I_{\alpha}}a_{\gamma}X_{\gamma}(n + \gamma(\phi(\mu))),\\
&=& \sigma_{\tilde{\mu}}(\widehat{\phi_1}(X_{\alpha}(n))).
\end{eqnarray*}

 To prove the fourth identity we use Lemma ~\ref{case2}. For a non-zero element $X_{\beta}$ in the root space ${\beta}$, consider the following:
 \begin{eqnarray*}
\sigma_{\tilde{\mu}}(\widehat{\phi}_2(X_{\beta}(n)))&=& \sigma_{\phi_1(\mu)}(\sum_{\gamma \in I_{\beta}}X_{\gamma}(n)),\\
&=& \sum_{\gamma \in I_{\beta}}\sigma_{\phi_1(\mu)}(X_{\gamma}(n)),\\
&=& \sum_{\gamma \in I_{\beta}} X_{\gamma}(n + \gamma(\phi(\mu))),\\
&=& \sum_{\gamma \in I_{\beta}} X_{\gamma}(n),\\
&=& \widehat{\phi_2}(X_{\beta}(n)).
\end{eqnarray*}
We are only left to show that $\sigma_{\tilde{\mu}}(\widehat{\phi_2}(h_2(n)))=\widehat{\phi_2}(h_2(n))$ for $h_2\in \heh_2$, which follows from the following lemma. 
\end{proof}

\begin{Lemma}
$\la \phi_1(h_1), \phi_2(h_2)\ra_{\geg} =0$ for any element $h_1$, $h_2$ of $\heh_1$ and $\heh_2$ respectively.
\end{Lemma}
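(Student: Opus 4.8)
The plan is to show that the image $\phi_1(\heh_1)$ is orthogonal to $\phi_2(\heh_2)$ inside $\heh$ with respect to the normalized Cartan Killing form $\la\ ,\ \ra_{\geg}$. The key point is that the two images land in commuting subalgebras of $\geg$: since $\phi=(\phi_1,\phi_2)$ is (by construction) an embedding of $\geg_1\oplus \geg_2$ into $\geg$, for any $X_1 \in \geg_1$ and $X_2 \in \geg_2$ we have $[\phi_1(X_1), \phi_2(X_2)] = \phi([X_1,X_2]_{\geg_1\oplus\geg_2}) = 0$. So the plan hinges on the elementary fact that if $\mathfrak{a}$ and $\mathfrak{b}$ are commuting subalgebras of a Lie algebra, then $\mathfrak{a}$ and $\mathfrak{b}$ are orthogonal with respect to any invariant form.

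First I would take arbitrary $h_1 \in \heh_1$, $h_2 \in \heh_2$ and consider the elements $\phi_1(h_1), \phi_2(h_2) \in \heh \subset \geg$. Since $\heh_1 \subset \geg_1$ and $\heh_2 \subset \geg_2$ sit in the two summands of $\geg_1 \oplus \geg_2$, they commute there, hence $[\phi_1(h_1), \phi_2(h_2)] = 0$ in $\geg$. The normalized Cartan Killing form $\la\ ,\ \ra_{\geg}$ is invariant (associative), i.e. $\la [x,y], z\ra_{\geg} = \la x, [y,z]\ra_{\geg}$. The remaining step is to leverage this to conclude orthogonality.

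To do this, write $h_2$ in the standard way: since $\heh_2$ is a Cartan subalgebra of the \emph{simple} Lie algebra $\geg_2$, and $\geg_2 = [\geg_2,\geg_2]$, we can write $h_2$ as a finite sum of brackets $[Y_\gamma, Y_{-\gamma}]$ of root vectors (more simply, $\heh_2$ is spanned by coroots $H_\gamma = [X_\gamma, X_{-\gamma}]$). Then for each such coroot,
\begin{eqnarray*}
\la \phi_1(h_1), \phi_2(H_\gamma)\ra_{\geg} &=& \la \phi_1(h_1), [\phi_2(X_\gamma), \phi_2(X_{-\gamma})]\ra_{\geg}\\
&=& \la [\phi_1(h_1), \phi_2(X_\gamma)], \phi_2(X_{-\gamma})\ra_{\geg}\\
&=& \la 0, \phi_2(X_{-\gamma})\ra_{\geg} = 0,
\end{eqnarray*}
using that $\phi_1(h_1)$ commutes with all of $\phi_2(\geg_2)$, in particular with $\phi_2(X_\gamma)$. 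By linearity this gives $\la \phi_1(h_1), \phi_2(h_2)\ra_{\geg} = 0$ for all $h_2 \in \heh_2$, as desired.

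I do not anticipate a serious obstacle here; the only mild subtlety is making precise the hypothesis that $\phi_1$ and $\phi_2$ have commuting images (this is implicit in ``$\phi : G_1 \times G_2 \to G$'' being a homomorphism of groups, equivalently $\phi : \geg_1 \oplus \geg_2 \to \geg$ a Lie algebra map restricting to $\phi_1, \phi_2$ on the two factors), and in recalling that a Cartan subalgebra of a simple Lie algebra is spanned by coroots so that the invariance of the form can be brought to bear. Both are standard, so the proof is short.
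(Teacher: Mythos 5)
Your argument is correct, and it reaches the conclusion by a cleaner route than the paper. Both proofs reduce to the coroots $H_\beta$ of $\geg_2$, and both ultimately rest on the single fact that $\phi_1(h_1)$ commutes with $\phi_2(X_{\pm\beta})$. But where you invoke the invariance $\la x,[y,z]\ra_{\geg}=\la [x,y],z\ra_{\geg}$ of the normalized form to get $\la \phi_1(h_1), \phi_2(H_\beta)\ra_{\geg}=\la [\phi_1(h_1),\phi_2(X_\beta)],\phi_2(X_{-\beta})\ra_{\geg}=0$ in one line, the paper instead expands $\phi(X_{\pm\beta})$ in root vectors of $\geg$, writes $\phi(H_\beta)$ as a combination of coroots $H_\gamma$ indexed by the supporting roots, and deduces $\gamma(\phi(h_1))=0$ for each such $\gamma$ from the vanishing of the bracket --- in effect re-deriving the invariance identity componentwise. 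Your version buys brevity and avoids the bookkeeping with the index sets $I_{\pm\beta}$; the paper's version yields the slightly finer intermediate statement $\la \phi_1(h_1), H_\gamma\ra_{\geg}=0$ for every root $\gamma$ of $\geg$ occurring in $\phi(X_\beta)$, which fits the pattern of computations it uses elsewhere in that section. One caveat: the blanket principle you announce at the outset --- that commuting subalgebras are orthogonal under any invariant form --- is false in general (a Cartan subalgebra commutes with itself, yet the form restricted to it is nondegenerate). What is true, and what your detailed argument correctly uses, is that $\mathfrak{a}$ is orthogonal to $[\mathfrak{b},\mathfrak{b}]$ whenever $[\mathfrak{a},\mathfrak{b}]=0$, combined with the fact that $\heh_2$ is spanned by coroots and hence lies in $[\geg_2,\geg_2]$; you should state the principle in that form.
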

\begin{proof} It is enough to proof the result for all $H_{\beta}$, where $\beta$ is a root of $\geg_2$. Let $\phi(X_{\beta})=\sum_{\lambda \in I_{\beta}}a_{\lambda}X_{\lambda}$ and $\phi(X_{-\beta})=\sum_{\gamma\in I_{-\beta}}a_{\gamma}X_{\gamma}$. Since $\phi_2(\heh_2) \subset \heh$, we get 
$\phi(H_{\beta})=\sum_{\gamma \in I_{\beta}\cap(-I_{-\beta})}a_{\gamma}b_{-\gamma}H_{\gamma}$. 

Now $\left[ \phi(h_1), \phi(X_{\beta})\right]= \sum_{\gamma \in I_{\beta}} a_{\gamma} \gamma(\phi(h_1))X_{\gamma}= 0$. Thus for $\gamma \in I_{\beta}$, we get $\gamma(\phi(h_1))=\la h_\gamma, \phi(h_1)\ra_{\geg}=0$ which implies $\la \phi_1(h_1), H_{\gamma}\ra_{\geg}=0$. Thus, we have the following:

\begin{eqnarray*}
\la \phi_1(h_1), \phi_2(H_{\beta})\ra_{\geg} &=& \sum_{\gamma \in I_{\beta}\cap(-I_{-\beta})}a_{\gamma}b_{-\gamma}\la \phi_1(h_1), H_{\gamma} \ra_{\geg},\\
&=& 0.
\end{eqnarray*}

\end{proof}

\section{Multi-shift automorphisms} 

We recall the definition of multi-shift automorphisms following ~\cite{FS}. Let us fix a sequence of pairwise distinct complex numbers $z_s$ for $s \in \{1, \cdots, n\}$, the coroot $H_{\alpha}$ corresponding to the roots $\alpha$. Let $P^{\vee}$ and $Q^{\vee}$ denote the coweight and the coroot lattice of $\geg$ respectively and $\Gamma_n^{\geg}=\{(\mu_1,\dots,\mu_n) | \mu_i \in P^{\vee} \text{and} \sum_{i=1}^n\mu_i=0\}$. Consider a Chevalley basis given by $\{ X_{-\alpha}$, $X_{\alpha}$, $H_{\alpha} : \alpha \in \Delta_{+} \}$. For $\vect{\mu} \in \Gamma_{n}^{\geg}$, we define a multi-shift automorphism $\sigma_{\vect{\mu},t}(\vec{z})$ of $\wgeg$ as follows:
\begin{eqnarray*}
\sigma_{\vect{\mu},t}(\vec{z})(c)&:=&(c),\\
\sigma_{\vect{\mu},t}(\vec{z})(h)\otimes f&:=& h\otimes f + \bigg(\sum_{s=1}^n\la h, \mu_s\ra \Res(\varphi_{t,s}.f)\bigg).c, \\
\sigma_{\vect{\mu},t}(\vec{z})(X_{\alpha}\otimes f)&:=& X_{\alpha}\otimes f. \prod_{s=1}^n{\varphi_{t,s}^{-\alpha(\mu_s) }},
\end{eqnarray*}
where $f \in \CC((\xi))$, $\varphi_{t,s}(\xi)=(\xi+ (z_t-z_s))^{-1}, h \in \heh.$

Let us now recall some important properties of the multi-shift automorphisms.
\begin{enumerate}
\item The multi-shift automorphism $\sigma_{\vec{\mu},t}(\vec{z})$ has the same outer automorphism class as the single shift automorphism $\sigma_{\mu_t}$.
\item It is shown in [FS] that $\sigma_{\vect{\mu},t}$ is a Lie algebra automorphism of $\wgeg$ and can be easily extended to an automorphism of $\wgeg_{n}$.

\item Multi-shift automorphisms of $\wgeg_n$ preserve the current algebra $\geg\otimes \Gamma(\mathbb{P}^1-\vec{p},\mathcal{O}_{\mathbb{P}^1}),$ where $\vec{p}=(P_1,\dots, P_n)$ are $n$ distinct points with coordinates $z_1,\dots, z_n$. 

\end{enumerate}
The following is one of the main results of ~\cite{FS}
\begin{prop}\label{fs}
Let $\mathfrak{X}$ be the data associated to $n$ distinct points $\vec{z}$ on $\mathbb{P}^1$ with chosen coordinates. Then there is an isomorphism 
$$\Theta_{\vec{\mu}}(\vec{z}) : \mathcal{V}_{\vec{\lambda}}(\mathfrak{X}, \frg, \ell)\rightarrow \mathcal{V}_{\vec{\sigma}\vec{\lambda}}(\mathfrak{X}, \frg, \ell).$$ More over the isomorphism is flat with respect to the KZ/Hitchin/WZW connection.
\end{prop}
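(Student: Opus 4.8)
The plan is to realize $\Theta_{\vec{\mu}}(\vec{z})$ as a twist of the $\wgeg_n$-action by the multi-shift automorphism $\sigma_{\vec{\mu}}(\vec{z})$ of $\wgeg_n$ assembled from the $\sigma_{\vec{\mu},t}(\vec{z})$, and then to descend this twist to coinvariants. First I would construct, for each marked point $i$, a twisting isomorphism on highest weight modules. Twisting the integrable highest weight module $\mathcal{H}_{\lambda_i}(\frg)$ by $\sigma_{\vec{\mu},i}(\vec{z})$ produces another integrable $\wgeg$-module of level $\ell$; since by property $(1)$ the automorphism $\sigma_{\vec{\mu},i}(\vec{z})$ lies in the outer automorphism class of the single-shift $\sigma_{\mu_i}$, which in turn is in the outer class of a diagram automorphism $\omega_i$, the twisted module is isomorphic to $\mathcal{H}_{\omega_i^{*}\lambda_i}(\frg)$. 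Fixing such an isomorphism $\Theta_i$, normalized on highest weight vectors, and setting $\Theta_{\vec{\mu}}(\vec{z})=\bigotimes_{i=1}^n\Theta_i$ yields a linear isomorphism $\mathcal{H}_{\vec{\lambda}}\to\mathcal{H}_{\vec{\sigma}\vec{\lambda}}$ satisfying the intertwining identity $\Theta_{\vec{\mu}}(\vec{z})(x\cdot v)=\sigma_{\vec{\mu}}(\vec{z})(x)\cdot\Theta_{\vec{\mu}}(\vec{z})(v)$ for all $x\in\wgeg_n$ and $v\in\mathcal{H}_{\vec{\lambda}}$. This identity is the defining property of $\Theta_{\vec{\mu}}(\vec{z})$ and need only be verified on the generators $h\otimes f$ and $X_{\alpha}\otimes f$, where it follows from the explicit formulas defining $\sigma_{\vec{\mu},t}(\vec{z})$.

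Next I would descend $\Theta_{\vec{\mu}}(\vec{z})$ to the spaces of coinvariants. By property $(3)$, the multi-shift automorphism preserves the current algebra $\frg\otimes\Gamma(\mathbb{P}^1-\vec{p},\mathcal{O}_{\mathbb{P}^1})=\frg(\mathfrak{X})$. Hence for any $X\in\frg(\mathfrak{X})$ and $v\in\mathcal{H}_{\vec{\lambda}}$ the intertwining identity gives $\Theta_{\vec{\mu}}(\vec{z})(Xv)=\sigma_{\vec{\mu}}(\vec{z})(X)\,\Theta_{\vec{\mu}}(\vec{z})(v)$ with $\sigma_{\vec{\mu}}(\vec{z})(X)\in\frg(\mathfrak{X})$, so $\Theta_{\vec{\mu}}(\vec{z})$ carries the subspace $\frg(\mathfrak{X})\mathcal{H}_{\vec{\lambda}}$ into $\frg(\mathfrak{X})\mathcal{H}_{\vec{\sigma}\vec{\lambda}}$ and therefore descends to a linear map $\mathcal{V}_{\vec{\lambda}}(\mathfrak{X},\frg,\ell)\to\mathcal{V}_{\vec{\sigma}\vec{\lambda}}(\mathfrak{X},\frg,\ell)$. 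Because the shift automorphisms are additive and $\sigma_{\vec{\mu}}(\vec{z})$ is invertible with inverse the multi-shift automorphism attached to $-\vec{\mu}$, the same construction produces a two-sided inverse $\Theta_{-\vec{\mu}}(\vec{z})$, so the descended map is an isomorphism.

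Finally I would establish flatness with respect to the KZ/Hitchin/WZW connection. The connection is built from the Sugawara energy-momentum operators acting at the marked points, and differentiating in $z_t$ introduces precisely the rational functions $\varphi_{t,s}(\xi)=(\xi+(z_t-z_s))^{-1}$ that appear in the definition of $\sigma_{\vec{\mu},t}(\vec{z})$. The plan is to compute $\partial_{z_t}\Theta_{\vec{\mu}}(\vec{z})$ and show that the correction terms produced by the $\vec{z}$-dependence of the twist cancel against the difference of the Sugawara operators computed in the two representations, so that $\Theta_{\vec{\mu}}(\vec{z})$ intertwines the two KZ connections. The main obstacle is this last step: constructing the map and checking that it is an isomorphism follows cleanly from the automorphism formalism, whereas verifying flatness requires a careful compatibility computation between the explicit $z_t-z_s$ dependence of the multi-shift twist and the Sugawara construction of the connection, which is exactly the feature for which the functions $\varphi_{t,s}$ were engineered.
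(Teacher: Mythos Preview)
The paper does not supply its own proof of this proposition: it is stated as ``one of the main results of~\cite{FS}'' and simply cited. The strategy you outline---twisting each $\mathcal{H}_{\lambda_i}$ by the multi-shift automorphism $\sigma_{\vec{\mu},i}(\vec{z})$, using property~(1) to identify the twisted module with $\mathcal{H}_{\omega_i^*\lambda_i}$, tensoring, and then descending to coinvariants via property~(3)---is exactly the construction the paper summarizes in its introduction when describing~\cite{FS}. So your approach coincides with the one the paper attributes to Fuchs--Schweigert; there is nothing to compare against in the present paper beyond that summary.

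Your honest flagging of the flatness step as the main obstacle is appropriate: the isomorphism on coinvariants follows formally from the automorphism formalism, while flatness is the genuinely analytic ingredient, requiring the explicit $\vec{z}$-dependence built into the $\varphi_{t,s}$ to match the Sugawara variation. That computation is carried out in~\cite{FS} and is not reproduced here, so your sketch is at the same level of detail as what the paper itself provides.
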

\begin{Rem}It is easy to observe that to prove Theorem \ref{extension1} we need to show the same identities that we showed in the proof of Proposition \ref{key}. Hence the proof of Theorem \ref{extension1} follows from the proof of Proposition \ref{key}. 
\end{Rem}
\subsection{Multi-shift automorphisms as conjugations}
We restrict to the case $\geg$ is of type $A_n$, $B_n$, $C_n$ or $D_n$. It is easy to see that for $\vect{\mu} \in \Gamma_{N}^{\mathfrak{g}}$, the multi-shift automorphism $\sigma_{\vect{\mu}, s}$ of $\wgeg$ descends to an automorphism $\sigma_{\vect{\mu},t}$ of $\tilde{\geg}=\geg\otimes \CC((\xi))$. Let $\vect{\mu}\in \Gamma_n^{\geg}$ and consider $n$ distinct points $P_1, P_2, \cdots, P_n$ on $\mathbb{P}^1-\infty$ and denote their coordinates by $\xi_i(P_i)=z-z_i$, where $z$ is a global coordinate of $\CC$. We now consider $\tau_{\vect{\mu}}=\exp(\ln {(\xi_1)}\mu_1 + \ln {(\xi_2)}\mu_2 + \cdots + \ln {(\xi_n)}\mu_n)$ which is only well defined up to a choice of a branch of the logarithms. We can rewrite $\xi_i=z-z_i$ as $\xi_t+z_t-z_i$ and expand $\exp{(\ln(\xi_s)\mu_s)}$ in terms of $\xi_t$. We rewrite $\tau_{\vect{\mu}}$, in the coordinate $\xi_t$ and we rename it as $\tau_{\vect{\mu}}$ as $\tau_{\vect{\mu},t}$. We can conjugate by $\tau_{\vect{\mu},t}$ on $\tilde{\geg}=\geg\otimes \CC((\xi_t))$. Let us denote the conjugation by $c(\tau_{\vect{\mu},t})$. It is well defined and is independent of the branch of the logarithm chosen. We have the following proposition, the proof of which follows by a direct calculation:
\begin{prop}
 The automorphisms $c(\tau_{\vect{\mu},t})$ and $\sigma_{\vect{\mu},t}$ coincide on $\tilde{\geg}$.
\end{prop}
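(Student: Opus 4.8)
The plan is to mirror exactly the strategy used in the proof of Proposition~\ref{loopaut} and Lemma~\ref{loopeq}, i.e. to reduce the statement to a direct, coordinate-level computation on Chevalley generators, and to exploit additivity so that only fundamental coweights need to be treated. First I would observe that the coordinate change $\xi_i = \xi_t + (z_t - z_i)$ is an invertible element of $\CC[[\xi_t]]$, so $\varphi_{t,s}(\xi_t) = (\xi_t + (z_t-z_s))^{-1}$ lies in $\CC[[\xi_t]]$ for $s\neq t$ and in $\xi_t^{-1}\CC[[\xi_t]]^{\times}$ for $s=t$; in particular $\tau_{\vect{\mu},t} = \exp\big(\sum_{s=1}^n \ln(\xi_t + (z_t-z_s))\,\mu_s\big)$ makes sense, modulo the choice of branches, as an element of the appropriate completed loop group, and conjugation by it is branch-independent since changing a branch multiplies $\tau_{\vect{\mu},t}$ by an element of the maximal torus, which commutes with itself and acts trivially by conjugation on $\heh\otimes\CC((\xi_t))$ and by a unit scalar on each root space (the net effect cancels in $x\mapsto \tau x\tau^{-1}$).

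Next I would check the identity $c(\tau_{\vect{\mu},t}) = \sigma_{\vect{\mu},t}$ on the three types of elements appearing in the definition of $\sigma_{\vect{\mu},t}$. On $c$ there is nothing to prove since $c(\tau_{\vect{\mu},t})$ acts on $\tilde{\geg}$, which has no central term, matching $\sigma_{\vect{\mu},t}(c)=c$ after noting that in the descent to $\tilde\geg$ the central contribution is dropped on both sides. On $h\otimes f$ with $h\in\heh$: since $\tau_{\vect{\mu},t}$ is torus-valued, $\tau_{\vect{\mu},t}(h\otimes f)\tau_{\vect{\mu},t}^{-1} = h\otimes f$, which agrees with $\sigma_{\vect{\mu},t}$ restricted to $\tilde\geg$ (the residue term being central and hence zero in $\tilde\geg$). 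On a root vector $X_\alpha\otimes f$: conjugation by $\exp\big(\sum_s \ln(\xi_t+(z_t-z_s))\mu_s\big)$ scales $X_\alpha$ by $\exp\big(\sum_s \alpha(\mu_s)\ln(\xi_t+(z_t-z_s))\big) = \prod_{s=1}^n (\xi_t+(z_t-z_s))^{\alpha(\mu_s)} = \prod_{s=1}^n \varphi_{t,s}^{-\alpha(\mu_s)}$, so $c(\tau_{\vect{\mu},t})(X_\alpha\otimes f) = X_\alpha\otimes f\cdot\prod_s \varphi_{t,s}^{-\alpha(\mu_s)}$, which is precisely $\sigma_{\vect{\mu},t}(X_\alpha\otimes f)$. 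As in Proposition~\ref{loopaut}, for the classical types one can alternatively verify this on the matrix units $E_{i,j}$ and fundamental coweights and observe that the resulting powers of $\xi_t$ are the same integers produced by $\sigma_{\vect{\mu},t}$; by additivity of both $\vect{\mu}\mapsto\tau_{\vect{\mu},t}$ and $\vect{\mu}\mapsto\sigma_{\vect{\mu},t}$ this suffices.

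The only genuine subtlety — and the step I expect to require the most care — is making precise the passage between $\xi_1,\dots,\xi_n$ and the single working coordinate $\xi_t$: one must justify that re-expanding $\exp(\ln(\xi_s)\mu_s)$ as a series in $\xi_t$ and then multiplying the resulting (possibly infinite) torus-valued series is legitimate inside a suitable completion of $G(\CC((\xi_t)))$, and that the condition $\sum_s \mu_s = 0$ guarantees $\tau_{\vect{\mu},t}$ normalizes $\tilde\geg$ rather than merely some larger algebra with fractional or unbounded powers. This is exactly the reason the analogous single-shift statement (Proposition~\ref{loopaut}) was restricted to classical $\geg$ and checked by hand, and the same restriction is in force here; once that bookkeeping is in place, the equality of the two automorphisms is a term-by-term comparison requiring no new ideas, so I would state it as ``follows by a direct calculation,'' as the paper does.
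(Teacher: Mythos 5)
Your proposal is correct and follows the same route as the paper, which simply asserts the result ``follows by a direct calculation'': you actually carry out that calculation, checking that conjugation fixes $\heh\otimes\CC((\xi_t))$ and scales $X_\alpha\otimes f$ by $\prod_{s}(\xi_t+z_t-z_s)^{\alpha(\mu_s)}=\prod_s\varphi_{t,s}^{-\alpha(\mu_s)}$, in exact agreement with the definition of $\sigma_{\vect{\mu},t}$. The only small imprecision is attributing the absence of fractional powers to $\sum_s\mu_s=0$; it really comes from the integrality of $\alpha(\mu_s)$ for $\mu_s\in P^{\vee}$ (as in Proposition~\ref{loopaut}), but this does not affect the argument.
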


\begin{Rem}
If one of the chosen points $P_i$ is infinity, the formula of the multi-shift automorphism needs a minor modification to accommodate the new coordinate at infinity. This has been considered in ~\cite{FS}. 
\end{Rem}

\section{New rank-level dualities for $\mathfrak{sp}(2r)$}
In this section, we use Corollary 2 and symplectic strange duality of T. Abe ~\cite{A} to generate new symplectic rank-level dualities. 

\subsection{Young Diagrams}Let $\mathcal{Y}_{r,s}$ denote the set of Young diagrams with at most $r$ rows and $s$ columns. For a Young diagram $Y=(a_1,a_2,\cdots, a_r) \in \mathcal{Y}_{r,s}$, we denote by $Y^T$ the Young diagram obtained by exchanging the rows and columns.
 For a Young diagram $Y \in \mathcal{Y}_{r,s}$, we denote by $Y^c$ the Young diagram given by the conjugate $(s-a_r, s-a_{r-1}, \cdots, s-a_1)$. The Young diagram $Y^*$ is defined to be $({Y^{T}})^c$. It is easy to see $(Y^T)^c=(Y^c)^T$, where $Y \in \mathcal{Y}_{r,s}$.

There is a one to one correspondence between $P_s(\mathfrak{sp}(2r))$ and Young diagrams $\mathcal{Y}_{r,s}$. For $\lambda \in P_s(\mathfrak{sp}(2r))$, the corresponding Young diagram is denoted by $Y(\lambda)$. The dominant weight of $\mathfrak{sp}(2s)$ of level $r$, corresponding to the Young diagram $Y(\lambda)^*$ will be denoted by $\lambda^*$ and that of $Y(\lambda)^T$ by $\lambda^T$. It is easy to observe that $\lambda \rightarrow \lambda^*$ gives a bijection of $P_s(\mathfrak{sp}(2r))$ with $P_r(\mathfrak{sp}(2s))$. Also $\lambda \rightarrow \lambda^T$ gives a bijection of $P_s(\mathfrak{sp}(2r))$ with $P_r(\mathfrak{sp}(2s))$.

\subsection{Action of Center}We now describe the action of the center of $Sp(2r)$ as diagram automorphisms on $P_s(\mathfrak{sp}(2r))$. Let $\omega$ be the outer automorphism that corresponds to the diagram automorphism which sends the $i$-th vertex to $r-i$-th vertex of the Dynkin diagram of $\widehat{\mathfrak{sp}}(2r)$, where $0\leq i \leq r$. Then the Young diagram of $\omega^*\lambda$ is given by $Y(\lambda)^c$, where $Y(\lambda)$ is the Young diagram corresponding to $\lambda \in P_{s}(\mathfrak{sp}(2r))$. 

The action of the non-trivial element $\omega \in Z(\operatorname{Sp}(2s))$ on $P_{r}(\mathfrak{sp}(2s))$ gives us the following:
\begin{Lemma}
For $\lambda \in \mathcal{Y}_{r,s}$, we get 
$$\omega (\lambda^*)=\lambda^T.$$
\end{Lemma}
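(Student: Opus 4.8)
The plan is to reduce the identity $\omega(\lambda^*) = \lambda^T$ to an elementary statement about the three operations $Y \mapsto Y^T$, $Y \mapsto Y^c$, $Y \mapsto Y^*$ on Young diagrams in $\mathcal{Y}_{r,s}$, using the two dictionaries already set up in this section: first, the action of the nontrivial central element $\omega \in Z(\operatorname{Sp}(2s))$ on $P_r(\mathfrak{sp}(2s))$ is given at the level of Young diagrams by complementation, i.e. $Y(\omega^*\nu) = Y(\nu)^c$ for $\nu \in P_r(\mathfrak{sp}(2s))$ (this is the statement ``the Young diagram of $\omega^*\lambda$ is $Y(\lambda)^c$'' applied with the roles of $r$ and $s$ interchanged, so that the relevant ambient rectangle is now $\mathcal{Y}_{s,r}$); and second, the definitions $\lambda^* \leftrightarrow Y(\lambda)^*$ and $\lambda^T \leftrightarrow Y(\lambda)^T$ together with the identity $Y^* = (Y^T)^c$.

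The key steps, in order, are as follows. First I would unwind the left-hand side: by the definition of $\lambda^*$ we have $Y(\lambda^*) = Y(\lambda)^*$, which lies in $\mathcal{Y}_{s,r}$; applying the central element $\omega$ of $\operatorname{Sp}(2s)$ then gives $Y(\omega(\lambda^*)) = \bigl(Y(\lambda)^*\bigr)^c$, where the complement is taken inside the $s \times r$ rectangle. Second, I would unwind the right-hand side: by definition $Y(\lambda^T) = Y(\lambda)^T$, which also lies in $\mathcal{Y}_{s,r}$. Third, the whole statement reduces to the purely combinatorial identity
$$\bigl(Y^*\bigr)^c = Y^T \qquad \text{for } Y \in \mathcal{Y}_{r,s},$$
with complements understood in the appropriate rectangles. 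Fourth, I would prove this by substituting $Y^* = (Y^T)^c$ and invoking the involutivity of complementation, $(Z^c)^c = Z$, applied to $Z = Y^T \in \mathcal{Y}_{s,r}$: indeed $(Y^*)^c = \bigl((Y^T)^c\bigr)^c = Y^T$. Since the correspondence between dominant weights of level $r$ of $\mathfrak{sp}(2s)$ and $\mathcal{Y}_{s,r}$ is a bijection, the equality of Young diagrams gives the equality of weights $\omega(\lambda^*) = \lambda^T$.

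The only genuine subtlety — and the step I would be most careful about — is bookkeeping of which rectangle each complement is taken in, since $Y(\lambda) \in \mathcal{Y}_{r,s}$ while $Y(\lambda)^T$, $Y(\lambda)^*$ and $Y(\omega(\lambda^*))$ all live in $\mathcal{Y}_{s,r}$; one must check that the ``$c$'' appearing in the central-element formula (quoted in this section for $\operatorname{Sp}(2r)$ acting on $\mathcal{Y}_{r,s}$) is exactly the ``$c$'' inside the $s \times r$ rectangle when the roles of $r$ and $s$ are swapped, and that this is the same complementation used in the definition $Y^* = (Y^T)^c$. Once these conventions are aligned, the computation is the two-line manipulation above; there is no analytic or representation-theoretic content beyond the already-established dictionaries.
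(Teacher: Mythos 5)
Your proof is correct and is exactly the argument the paper intends (the paper states this lemma without proof, as an immediate consequence of the dictionary $\omega^*\nu \leftrightarrow Y(\nu)^c$, the definition $Y^* = (Y^T)^c$, and the involutivity of complementation in the $s\times r$ rectangle). Your care about which rectangle each complement lives in is the only point of substance, and you handle it correctly.
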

\subsection{New symplectic rank-level duality}

Let us fix $n$ distinct smooth points $\vec{p}=(P_1, \cdots, P_n)$ on the projective line $\mathbb{P}^1$. Let $\vec{z}=(z_1,\dots, z_n)$ be the local coordinates of $\vec{p}$. We denote the above data by $\mathfrak{X}$. Consider an $n$-tuple of level $s$ weights $\vect{\lambda}=(\lambda_1, \cdots, \lambda_n)$ and $\vect{{\lambda}}^*:=(\lambda_1^*, \cdots,\lambda_n^*)$. Assume that both $n$ and $\sum_{j=1}^n|Y(\lambda_i)|$ are even, the following is the main result in ~\cite{A}:
\begin{prop}\label{abeasd}
The rank-level duality map 
$$\mathcal{V}_{\vect{\lambda}}(\mathfrak{X}, \mathfrak{sp}(2r),s)\rightarrow\mathcal{V}^{\dagger}_{\vect{\lambda}^*}(\mathfrak{X}, \mathfrak{sp}(2s),r),$$ is an isomorphism. 
\end{prop}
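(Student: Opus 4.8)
The plan is to realize $\Psi$ through the conformal embedding $\phi:\mathfrak{sp}(2r)\oplus\mathfrak{sp}(2s)\hookrightarrow\mathfrak{so}(4rs)$ and then to prove it is an isomorphism by matching dimensions via the Verlinde formula and establishing nondegeneracy by factorization. First I would verify that $\phi$ is a \emph{conformal} embedding at level $(s,r,1)$, i.e.\ that the Sugawara central charges satisfy $c(\mathfrak{sp}(2r),s)+c(\mathfrak{sp}(2s),r)=c(\mathfrak{so}(4rs),1)$; a direct computation gives $\tfrac{s\,r(2r+1)}{s+r+1}+\tfrac{r\,s(2s+1)}{r+s+1}=2rs=\tfrac{4rs}{2}$, which is exactly the level-$1$ value for $\mathfrak{so}(4rs)$. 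This numerical identity forces the branching of each level-$1$ integrable $\widehat{\mathfrak{so}}(4rs)$-module under $\widehat{\mathfrak{sp}}(2r)\oplus\widehat{\mathfrak{sp}}(2s)$ to be a \emph{finite} direct sum of tensor products $\mathcal{H}_{\lambda}(\mathfrak{sp}(2r))\otimes\mathcal{H}_{\lambda^*}(\mathfrak{sp}(2s))$. The combinatorial heart of this step is to show that the pairs occurring are exactly those indexed by $\lambda\mapsto\lambda^*$ under the Young-diagram correspondence $Y\mapsto Y^*=(Y^T)^c$ of Section~6, each with multiplicity one, and that the parity hypotheses on $n$ and $\sum_i|Y(\lambda_i)|$ select which of the four level-$1$ $\mathfrak{so}(4rs)$-modules (vacuum, vector, and the two half-spin) is relevant at each marked point.

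With the branching in hand, the map $\Psi$ is built pointwise: the inclusion $\mathcal{H}_{\lambda_i}(\mathfrak{sp}(2r))\otimes\mathcal{H}_{\lambda_i^*}(\mathfrak{sp}(2s))\hookrightarrow\mathcal{H}_{\Lambda_i}(\mathfrak{so}(4rs))$ together with the algebra inclusion $\widehat{\phi}$ carries the current algebra $(\mathfrak{sp}(2r)\oplus\mathfrak{sp}(2s))\otimes\Gamma(\mathbb{P}^1-\vec{p},\mathcal{O}_{\mathbb{P}^1})$ into $\mathfrak{so}(4rs)\otimes\Gamma(\mathbb{P}^1-\vec{p},\mathcal{O}_{\mathbb{P}^1})$, hence descends to coinvariants and yields a pairing between $\mathcal{V}_{\vect{\lambda}}(\mathfrak{X},\mathfrak{sp}(2r),s)$ and $\mathcal{V}_{\vect{\lambda}^*}(\mathfrak{X},\mathfrak{sp}(2s),r)$ valued in $\mathcal{V}_{\vect{\Lambda}}(\mathfrak{X},\mathfrak{so}(4rs),1)$, which the parity hypotheses guarantee is one-dimensional (the level-$1$ $\mathfrak{so}$ theory being essentially abelian). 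I would make this pairing explicit using propagation of vacua and the fermionic Fock-space realization of $\widehat{\mathfrak{so}}(4rs)$ at level $1$. Trivializing the one-dimensional target turns the stated map $\mathcal{V}_{\vect{\lambda}}(\mathfrak{X},\mathfrak{sp}(2r),s)\rightarrow\mathcal{V}^{\dagger}_{\vect{\lambda}^*}(\mathfrak{X},\mathfrak{sp}(2s),r)$ into the assertion that this pairing is nondegenerate.

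Next I would compute both dimensions by the Verlinde formula and show they agree. Using the bijection $\lambda\mapsto\lambda^*$ of $P_s(\mathfrak{sp}(2r))$ with $P_r(\mathfrak{sp}(2s))$ and the explicit $S$-matrix for $\widehat{\mathfrak{sp}}(2m)$, whose entries are trigonometric and symmetric under interchanging the roles of the rank and the level, the Verlinde numbers for $(\mathfrak{sp}(2r),s)$ and $(\mathfrak{sp}(2s),r)$ with dual weight data coincide term by term. This yields $\dim\mathcal{V}_{\vect{\lambda}}(\mathfrak{X},\mathfrak{sp}(2r),s)=\dim\mathcal{V}^{\dagger}_{\vect{\lambda}^*}(\mathfrak{X},\mathfrak{sp}(2s),r)$, so it suffices to prove that $\Psi$ is injective, or equivalently that the pairing is nondegenerate.

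The main obstacle is exactly this nondegeneracy. I would prove it by induction on $n$ using factorization: degenerate the $n$-pointed $\mathbb{P}^1$ to a nodal chain of three-pointed rational curves, check that $\Psi$ is compatible with the factorization isomorphisms on all three sides, and reduce to the base case of a three-pointed sphere. Compatibility at a node requires that summing over the intermediate level-$1$ $\mathfrak{so}(4rs)$-weight matches, under $\lambda\mapsto\lambda^*$, the sum over the pair of intermediate $\mathfrak{sp}$-weights; this is where the multiplicity-one branching and the spinorial parity bookkeeping must be reconciled. At the base, the blocks are spaces of $\mathfrak{sp}$-coinvariants of triple tensor products, and I would establish nondegeneracy directly by evaluating the pairing on highest-weight vectors and invoking multiplicity one together with the one-dimensionality of the relevant level-$1$ $\mathfrak{so}(4rs)$ block. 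Keeping factorization compatible with the parity constraints and verifying the three-point base case without a rank collapse is the real difficulty, and is essentially the content of Abe's argument in ~\cite{A}.
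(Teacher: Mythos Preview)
The paper does not prove this proposition at all: it is stated explicitly as ``the following is the main result in~\cite{A}'' and is used as a black-box input to deduce the new symplectic dualities in Proposition~\ref{ext5} via Corollary~\ref{ext}. So there is no ``paper's own proof'' to compare against; the statement is simply imported from Abe.

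Your proposal is therefore not a competing proof of something the paper establishes, but rather a sketch of Abe's argument itself---and you acknowledge as much in your final sentence. The outline you give (conformal embedding via the central-charge identity, multiplicity-one branching $\lambda\leftrightarrow\lambda^*$, one-dimensionality of the level-$1$ $\mathfrak{so}(4rs)$ target under the parity hypotheses, dimension equality from the rank/level symmetry of the $\widehat{\mathfrak{sp}}$ $S$-matrix, and nondegeneracy by factorization down to three points) is an accurate high-level summary of the strategy in~\cite{A}. The genuine work, as you correctly flag, lies in the three-point base case and in checking that factorization over the intermediate $\mathfrak{so}(4rs)$ level-$1$ weights matches the factorization over pairs of $\mathfrak{sp}$ weights under the branching; your sketch does not carry this out, but it identifies the right obstacles. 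For the purposes of this paper, however, none of that is needed: Proposition~\ref{abeasd} is quoted, not proved.
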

From the branching rule described in ~\cite{H}, we also get a map 
$$\Psi :\mathcal{V}_{\vect{\lambda}}(\mathfrak{X}, \mathfrak{sp}(2r),s)\otimes \mathcal{V}_{\vect{{\lambda}}^T}(\mathfrak{X}, \mathfrak{sp}(2s),r) \rightarrow \mathcal{V}_{\vect{\Lambda}}(\mathfrak{X}, \mathfrak{so}(4rs),1),$$ 
where $\vec{\Lambda}=(\Lambda_1, \dots, \Lambda_n)$ and $\Lambda_i$ is the unique level one dominant weight of $\mathfrak{so}(4rs)$ such that $\lambda_i$ and $\lambda_i^T$ appear in the branching of $\Lambda_i$. Assume that both $n$ and $\sum_{i=1}^n|\lambda_i|$ are even. We choose $\vec{\mu} \in \Gamma_n^{\mathfrak{sp}(2s)}$ such that for $1\leq i \leq n$, $\mu_i \in P(\mathfrak{sp}(2s))^{\vee}\backslash Q(\mathfrak{sp}(2s))^{\vee}$, and use Proposition \ref{abeasd} and Corollary 2 to get new symplectic rank-level dualities. 
\begin{prop}\label{ext5}
There is a linear isomorphism of the following spaces:
$$\mathcal{V}_{\vect{\lambda}}(\mathfrak{X}, \mathfrak{sp}(2r),s)\rightarrow  \mathcal{V}^{\dagger}_{\vect{{\lambda}}^T}(\mathfrak{X}, \mathfrak{sp}(2s),r).$$
\end{prop}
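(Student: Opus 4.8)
The plan is to deduce Proposition~\ref{ext5} from Proposition~\ref{abeasd} by using the diagram-automorphism symmetry of Corollary~\ref{ext} applied to the embedding $\phi:\mathfrak{sp}(2r)\oplus\mathfrak{sp}(2s)\to\mathfrak{so}(4rs)$. The point is that $\lambda^T$ and $\lambda^*$ differ precisely by the action of the nontrivial central diagram automorphism of $\widehat{\mathfrak{sp}}(2s)$: by the Lemma just above, $\omega(\lambda^*)=\lambda^T$ for each $\lambda\in\mathcal{Y}_{r,s}$. So the rank-level duality map $\Psi$ targeting $\mathfrak{sp}(2s)$-weights $\vect{\lambda}^T$ is the ``$\omega$-twist'' on the second factor of the map $\Psi$ targeting $\vect{\lambda}^*$, the latter being nondegenerate by Proposition~\ref{abeasd}.

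First I would set up the data: take $\vect{\omega}=(\omega_1,\dots,\omega_n)$ with each $\omega_i$ the nontrivial element of $Z(\operatorname{Sp}(2s))$ (regarded as a diagram automorphism of $\widehat{\mathfrak{sp}}(2s)$) on the second factor and the identity on the first, chosen so that $\prod_i\omega_i=\mathrm{id}$ — this is exactly the parity hypothesis that $n$ is even (and the condition on $\sum_i|\lambda_i|$ ensures the relevant $\mathfrak{so}(4rs)$ weights $\Lambda_i$ and their $\Omega_i$-twists are genuine level-one weights so that the multiplicity-one branching holds). Concretely one picks $\vec{\mu}\in\Gamma_n^{\mathfrak{sp}(2s)}$ with each $\mu_i\in P(\mathfrak{sp}(2s))^\vee\setminus Q(\mathfrak{sp}(2s))^\vee$ representing $\omega_i$ via $\sigma_{\mu_i}$; since $n$ is even such a choice lies in $\Gamma_n^{\mathfrak{sp}(2s)}$. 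Then $\Omega_i=\phi(\mu_i)$ (or rather the induced diagram automorphism of $\widehat{\mathfrak{so}}(4rs)$), and $\vect{\Omega}\vect{\Lambda}$ is the $n$-tuple of $\mathfrak{so}(4rs)$ weights obtained by twisting; one checks, using the branching rule of ~\cite{H}, that $\vect{\lambda}$ on the first factor and $\vect{\lambda}^T$ on the second factor appear with multiplicity one in $\vect{\Omega}\vect{\Lambda}$, which identifies the map $\Psi$ of this section with the ``$\vect{\Omega}\vect{\Lambda}$'' instance of Corollary~\ref{ext}(2) for the symplectic pair.

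Next I would apply Corollary~\ref{ext}: the map in (1), namely $\mathcal{V}_{\vect{\lambda}}(\mathfrak{X},\mathfrak{sp}(2r),s)\otimes\mathcal{V}_{\vect{\lambda}^*}(\mathfrak{X},\mathfrak{sp}(2s),r)\to\mathcal{V}_{\vect{\Lambda}}(\mathfrak{X},\mathfrak{so}(4rs),1)$, is nondegenerate — this is Proposition~\ref{abeasd}, using $\vect{\gamma}=\vect{\lambda}^*$ — hence by the equivalence in Corollary~\ref{ext} the twisted map in (2), $\mathcal{V}_{\vect{\lambda}}(\mathfrak{X},\mathfrak{sp}(2r),s)\otimes\mathcal{V}_{\vect{\omega}\vect{\lambda}^*}(\mathfrak{X},\mathfrak{sp}(2s),r)\to\mathcal{V}_{\vect{\Omega}\vect{\Lambda}}(\mathfrak{X},\mathfrak{so}(4rs),1)$, is also nondegenerate. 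Finally, $\vect{\omega}\vect{\lambda}^*=\vect{\lambda}^T$ by the Lemma, so this twisted map is exactly $\Psi$, and nondegeneracy of a perfect-pairing-type map of finite-dimensional spaces yields the linear isomorphism $\mathcal{V}_{\vect{\lambda}}(\mathfrak{X},\mathfrak{sp}(2r),s)\to\mathcal{V}^{\dagger}_{\vect{\lambda}^T}(\mathfrak{X},\mathfrak{sp}(2s),r)$, which is the assertion of Proposition~\ref{ext5}. (The existence of such a $\Psi$, i.e.\ that the target spaces have matching dimension so nondegeneracy gives an isomorphism rather than just injectivity on one side, is guaranteed because Corollary~\ref{ext} transports the perfect-pairing property, not merely injectivity.)

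The main obstacle I expect is the bookkeeping in the second paragraph: verifying that the diagram automorphism $\vect{\Omega}$ of $\widehat{\mathfrak{so}}(4rs)$ induced by $\phi$ from the central element $\vect{\omega}$ of $\operatorname{Sp}(2s)^n$ does land in $Z(\operatorname{Spin}(4rs))^n$ with product the identity, and that the multiplicity-one branching $\mathcal{H}_{\lambda_i}\otimes\mathcal{H}_{\lambda_i^T}\subset\mathcal{H}_{\Omega_i\Lambda_i}$ holds — i.e.\ matching up the combinatorics of Young-diagram operations ($Y\mapsto Y^c$ on the $\mathfrak{sp}(2s)$ side versus the center action on level-one $\mathfrak{so}(4rs)$ weights) with the representation-theoretic branching. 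All of the genus-zero geometry and the hard nondegeneracy input are already packaged in Proposition~\ref{abeasd} and Corollary~\ref{ext}; what remains is the compatibility of the center actions under $\phi$, which is a finite type-dependent check.
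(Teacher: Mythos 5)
Your proposal is correct and follows essentially the same route as the paper: the paper's (very terse) proof likewise combines Abe's isomorphism (Proposition~\ref{abeasd}) with Corollary~\ref{ext} applied to the twist by the nontrivial central diagram automorphism of $\widehat{\mathfrak{sp}}(2s)$ at every point (realized by $\vec{\mu}\in\Gamma_n^{\mathfrak{sp}(2s)}$ with each $\mu_i\in P^{\vee}\setminus Q^{\vee}$, which exists since $n$ is even), together with the Lemma $\omega(\lambda^*)=\lambda^T$. Your write-up in fact supplies more of the bookkeeping (the branching compatibility and the role of the parity hypotheses) than the paper does.
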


\section{Branching rules and rank-level dualities for $\mathfrak{sl}(r)$}

We describe the branching rules of the embedding $\mathfrak{sl}(r)\oplus \mathfrak{sl}(s) \subset \mathfrak{sl}(rs)$ following ~\cite{ABI}.
Let $P_{+}(\mathfrak{sl}(r))$ denote the set of dominant integral weights of $\mathfrak{sl}(r)$ and $\Lambda_1, \cdots, \Lambda_{r-1}$ denote the fundamental weights of $\mathfrak{sl}(r)$. If $\lambda=\sum_{i=1}^{r-1}\til{k_i}\Lambda_i$ for non-negative integers $\til{k}_i$, we rewrite $\lambda$ as
${\lambda}= \sum_{i=0}^{r-1}\til{k_i}\Lambda_{i},$ where $ \sum_{i=0}^{r-1}\til{k_i}=s,$ and $\Lambda_0$ is the affine $0$-th fundamental weight. The level one weights of $\mathfrak{sl}(rs)$  are given by ${P}_{1}(\mathfrak{sl}(rs))= \{ \Lambda_{0}, \cdots, \Lambda_{rs-1} \}.$ We identify it with the set $\{1,2, \cdots, rs-1\}$.  
\subsection{Action of center and branching rule}
Let $\widehat{\rho} = g^*(\mathfrak{sl}(r))\Lambda_{0}+ \frac{1}{2} \sum_{\alpha \in \Delta_{+}} \alpha, $ where $g^*(\mathfrak{sl}(r))$ is the dual Coxeter number of $\mathfrak{sl}(r)$ and $\Delta_{+}$ is the set of positive roots respect to a chosen Cartan subalgebra of $\mathfrak{sl}(r)$. Consider ${\lambda} + \widehat{\rho} = \sum_{i=0}^{r-1} k_i \Lambda_i,$ where $k_i = \til{k_i}+1 $ and $\sum_{i=0}^{r-1}k_i={r+s}$. The center of $\operatorname{SL}(r)$ is $\ZZ/r\ZZ$. The action of $\ZZ/ r\ZZ$ induced from outer automorphisms on $P_{s}(\mathfrak{sl}(r))$ is described as follows:
\begin{eqnarray*} 
\mathbb{Z}/r\mathbb{Z} \times {P}_{s}(\mathfrak{sl}(r)) & \longrightarrow & {P}_{s}(\mathfrak{sl}(r)), \\
(\sigma, \Lambda_{i}) & \longrightarrow & \Lambda_{((i + \sigma)mod(r))}. 
\end{eqnarray*}
Let $\Omega_{r,s}= {P}_{s}(\mathfrak{sl}(r))/ ({\mathbb{Z}/r\mathbb{Z}})$ be the set of orbits under this action and similarly let $\Omega_{s,r}$ be the orbits of ${P}_{r}(\mathfrak{sl}(s))$ under the action of $\mathbb{Z}/s\mathbb{Z}$. The following map $\beta$ parametrizes the bijection.
$$ \beta: {P}_{s}(\mathfrak{sl}(r))\rightarrow {P}_{r}(\mathfrak{sl}(s))$$ 
Set $a_j = \sum_{i=j}^r k_i, \ \mbox{for}\   1 \leq j \leq r \ \mbox{and}  \ k_r=k_0.$ The sequence $\vect{a}=(a_1, a_2, \cdots, a_r)$ is decreasing. Let $(q_1, q_2, q_3, \cdots, q_s)$ be the complement of $\vect{a}$ in the set $\{ 1, 2, \cdots, (r+s)\}$ in decreasing order. We define the following sequence:
$$b_j= r+s + q_s -q_{s-j+1} \ \mbox{for} \ 1 \leq j \leq s. $$ The sequence $b_j$ defined above is also decreasing. The map $\beta$ is given by the following formula: 
$$\beta(a_1, \cdots, a_r) = (b_1, b_2, \cdots, b_s).$$

Thus when ${\lambda}$ runs over an orbit of $\Omega_{r,s}$, ${\gamma}=\sigma.\beta({\lambda})$ runs over an orbit of $\Omega_{s,r}$ if $\sigma$ runs over $\mathbb{Z}/s\mathbb{Z}$. 

The elements $\lambda$ of ${P}_{s}(\mathfrak{sl}(r))$ can be parametrized by Young diagrams $Y({\lambda})$ with at most $r-1$ rows and at most $s$ columns. Let $Y({\lambda})^{T}$ be the modified transpose of $Y({\lambda})$. If $Y({\lambda})$ has rows of length $s$, then $Y({\lambda})^T$ is obtained by taking the usual transpose of $Y({\lambda})$ and deleting the columns of length $s$. We denote by $\lambda^T$ the dominant integral weight of $\mathfrak{sl}(s)$ of level $r$ that corresponds to $Y({\lambda})^{T}$. With this notation we recall the following proposition from ~\cite{ABI}:

\begin{prop}
Let ${\lambda} \in {P}_{s}(\mathfrak{sl}(r))$ and $c({\lambda})$ be the number of columns of $Y({\lambda})$. Suppose $\sigma= c({\lambda})\operatorname{mod}{s}$. Then $ \sigma.\beta({\lambda})= {\lambda}^{T}.$
\end{prop}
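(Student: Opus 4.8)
The plan is to unwind the combinatorial definitions of the maps $\beta$ and $(\cdot)^T$ on level-$s$ weights of $\mathfrak{sl}(r)$ and to show that, after twisting $\beta(\lambda)$ by the central element $\sigma = c(\lambda)\bmod s$, the two sequences agree. First I would translate everything into the ``$k_i$-coordinates'': writing $\lambda+\widehat\rho=\sum_{i=0}^{r-1}k_i\Lambda_i$ with $\sum k_i=r+s$, so that $\lambda$ corresponds to the strictly decreasing sequence $\vec a=(a_1,\dots,a_r)$ with $a_j=\sum_{i=j}^r k_i$ (and $k_r=k_0$), and the complement $(q_1,\dots,q_s)$ of $\vec a$ in $\{1,\dots,r+s\}$ in decreasing order. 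The key observation to record at the outset is that the number of columns $c(\lambda)$ of the Young diagram $Y(\lambda)$ is precisely $a_1-a_r = (r+s) - a_r$ translated appropriately — more precisely I would pin down $c(\lambda)$ as $s - (\text{number of parts of }\lambda\text{ equal to }0)$ or equivalently in terms of where $q_s$ sits, since $b_j = r+s+q_s-q_{s-j+1}$ already shows $q_s$ governs the overall normalization of $\beta(\lambda)$.

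Next I would compute $\lambda^T$ directly. The modified transpose of a Young diagram with at most $r-1$ rows and $s$ columns, viewed in the $a$/$q$ language, swaps the roles of the ``$a$-set'' and the ``$q$-set'': the parts of $\lambda^T$ are read off from the $q_i$'s much as the parts of $\lambda$ are read off from the $a_i$'s, with a shift by $r\leftrightarrow s$ and a reversal coming from the fact that transposition reverses the containment order of hooks. Concretely I expect to get an expression $\lambda^T \leftrightarrow (c_1,\dots,c_s)$ with $c_j = (r+s) - q_{s-j+1}$ up to an additive constant, and then the discrepancy between this and $\beta(\lambda)=(b_1,\dots,b_s)$, namely $b_j - c_j = q_s - (\text{const})$, is exactly a constant independent of $j$. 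A constant shift of all the $k$-coordinates (equivalently, a cyclic rotation of the $\Lambda_i$ indices) is precisely the action of the center $\mathbb{Z}/s\mathbb{Z}$ on $P_r(\mathfrak{sl}(s))$ described above, and I would identify that constant with $c(\lambda)\bmod s$ using the column-count formula from the first paragraph.

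So the key steps, in order, are: (1) rewrite $\lambda$, $\beta(\lambda)$, and the central action all in the $(k_i)$ / $(a_j)$ / $(q_j)$ coordinates; (2) derive a closed formula for $c(\lambda)$ in these coordinates, in particular $c(\lambda) \equiv q_s - r \pmod s$ (or whatever the precise normalization turns out to be); (3) compute $\lambda^T$ explicitly in the same coordinates via the modified-transpose rule; (4) compare $\beta(\lambda)$ with $\lambda^T$ term by term, observe the difference is a single global shift, and (5) match that shift with $\sigma=c(\lambda)\bmod s$ via step (2), concluding $\sigma\cdot\beta(\lambda)=\lambda^T$. The main obstacle I anticipate is bookkeeping in step (3)–(4): getting the modified transpose (which deletes length-$s$ columns) correctly aligned with the ``complement in $\{1,\dots,r+s\}$'' description of $\beta$, including the off-by-one/reversal conventions and the treatment of the cyclic index $k_r=k_0$, so that the leftover difference really is constant in $j$ rather than merely constant up to an error one has to absorb. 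Once the dictionary between the two combinatorial recipes is set up cleanly, the identification of the shift with the column count should be a short residue-mod-$s$ computation.
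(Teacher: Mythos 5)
The paper does not actually prove this proposition --- it is recalled verbatim from the reference [ABI] of Altschuler--Bauer--Itzykson --- so there is no in-paper argument to compare against; I am judging your plan on its own terms. As written it has a structural flaw at its key step (4). You propose to show that the $b$-type coordinates of $\lambda^T$ and of $\beta(\lambda)$ differ by a constant independent of $j$, and to identify that constant with $\sigma=c(\lambda)\bmod s$. But the center of $\operatorname{SL}(s)$ acts on $P_r(\mathfrak{sl}(s))$ by cyclically permuting the coefficients $k'_0,\dots,k'_{s-1}$, and on the partial sums $b_j=\sum_{i\geq j}k'_i$ this acts by translating the \emph{set} $\{b_j\}$ modulo $r+s$ and then re-sorting and renormalizing so that the largest entry is again $r+s$ --- not by adding a constant to each $b_j$. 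Indeed $b_1=\sum_i k'_i=r+s$ for \emph{every} level-$r$ weight, so a nonzero termwise-constant shift is impossible for any nontrivial $\sigma$. Concretely, for $r=2$, $s=3$, $\lambda=2\Lambda_1$ one computes $\vec{b}=(5,3,2)$, whereas $\lambda^T=\Lambda_2$ has coordinates $(5,4,2)$: the termwise difference $(0,-1,0)$ is not constant, although the two subsets $\{0,3,2\}$ and $\{0,4,2\}$ of $\ZZ/5\ZZ$ do differ by the translation $t=2=\sigma$. Your anticipated closed formula $c_j=(r+s)-q_{s-j+1}+\mathrm{const}$ for $\lambda^T$ fails in the same example.

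The repair is to work with subsets of $\ZZ/(r+s)\ZZ$ throughout rather than with sorted sequences: encode $\lambda$ by the $r$-subset $\{a_j\}$, observe via the standard beta-set identity $\{0,\dots,r+s-1\}\setminus\{\ell_j+r-j\}=\{r-1+t-m_t\}$ (with $m_t$ the column lengths of $Y(\lambda)$) that the complement $\{q_i\}$ encodes the conjugate partition after a negation and a shift, and check that the generator of the center acts on such a subset by translating it by minus its smallest element. Then $\sigma\cdot\beta(\lambda)$ and $\lambda^T$ correspond to the same subset of $\ZZ/(r+s)\ZZ$, and the shift relating the two normalizations is computed from $c(\lambda)=s+1-a_r=s+1-k_0$. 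Your steps (1)--(3) and (5) survive in this reformulation, but the ``bookkeeping'' you defer is exactly where the distinction between a termwise-constant shift and a cyclic translation-plus-renormalization bites, so the argument as proposed does not go through without this change.
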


 The following Proposition from ~\cite{ABI} describes the multiplicity $m^{\Lambda}_{\lambda, \gamma}$ of the component $\mathcal{H}_{\lambda}\otimes \mathcal{H}_{\gamma}$ is $\mathcal{H}_{\Lambda}$. 
 \begin{prop} \label{branching1}
 For $\lambda \in P_{s}(\mathfrak{sl}(r))$ and $\sigma \in \ZZ/s\ZZ$, let $\delta({\lambda}, \sigma)=|Y({\lambda})| + rs + r(\sigma - c(Y({\lambda})))$. The multiplicity $m^{\Lambda}_{\lambda,\gamma}$ is given by the following formula:
 \begin{eqnarray*}
 m^{\Lambda}_{{\lambda}, {\gamma}} &=&1 \ \mbox{if} \ {\gamma}= \sigma\beta({\lambda}), \ \ \sigma \in \mathbb{Z}/s\mathbb{Z} \ \mbox{and} \ \Lambda= \delta({\lambda}, \sigma)\operatorname{mod}(rs),\\
 m^{\Lambda}_{{\lambda},{\gamma}}&=&0\ \mbox{otherwise}.
 \end{eqnarray*}
 \end{prop}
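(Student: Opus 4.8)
\textbf{Proof strategy for Proposition \ref{branching1}.} The plan is to derive the branching rule from the known combinatorics of the embedding $\mathfrak{sl}(r)\oplus\mathfrak{sl}(s)\subset\mathfrak{sl}(rs)$ together with the fact that, since this is a conformal embedding at level $(s,r,1)$, the branching of an integrable level-one $\widehat{\mathfrak{sl}}(rs)$-module into $\widehat{\mathfrak{sl}}(r)\oplus\widehat{\mathfrak{sl}}(s)$-modules is governed entirely by the branching of the underlying finite-dimensional modules $V_{\Lambda_k}$ of $\mathfrak{sl}(rs)$ — equivalently by the finite Clebsch--Gordan data of $\mathrm{GL}(r)\times\mathrm{GL}(s)\subset\mathrm{GL}(rs)$ acting on $\bigwedge^k(\mathbb{C}^r\otimes\mathbb{C}^s)$. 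So the first step would be to recall the classical decomposition $\bigwedge^k(\mathbb{C}^r\otimes\mathbb{C}^s)=\bigoplus_{Y\vdash k}V^{\mathrm{GL}(r)}_Y\otimes V^{\mathrm{GL}(s)}_{Y^{t}}$ (Howe duality / the Cauchy identity for the exterior algebra), read it at the level of $\mathfrak{sl}$ by quotienting out determinant twists, and observe that each isotypic component has multiplicity exactly one.

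Second, I would translate this into affine data. The index $k$ of the level-one weight $\Lambda_k$ of $\mathfrak{sl}(rs)$ is determined modulo $rs$; passing from $\mathrm{GL}$ to $\mathrm{SL}$ introduces the shifts by the center $\mathbb{Z}/r\mathbb{Z}$ on the $\mathfrak{sl}(r)$-side and $\mathbb{Z}/s\mathbb{Z}$ on the $\mathfrak{sl}(s)$-side, which is precisely the role of $\sigma\in\mathbb{Z}/s\mathbb{Z}$ and of the map $\beta$ introduced above: if $\gamma$ corresponds to $Y(\lambda)^{t}$ up to a determinant twist, then on the affine side $\gamma=\sigma\beta(\lambda)$ for the appropriate $\sigma$. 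The content of the earlier discussion (the sequences $\vect a$, $\vect b$ and the identity $\sigma.\beta(\lambda)=\lambda^{T}$ when $\sigma=c(\lambda)\bmod s$) is exactly the bookkeeping that converts "transpose of the Young diagram, corrected by columns of length $s$" into the affine weight $\sigma\beta(\lambda)$. I would then compute the corresponding value of $k$: tracking how many boxes $|Y(\lambda)|$ we have, the $rs$ coming from the normalization of $\bigwedge^{\bullet}$, and the determinant-twist shift $r(\sigma-c(Y(\lambda)))$ forced by moving $\sigma$ units on the $\mathfrak{sl}(s)$-side, gives $\Lambda=\delta(\lambda,\sigma)\bmod rs$ with $\delta(\lambda,\sigma)=|Y(\lambda)|+rs+r(\sigma-c(Y(\lambda)))$.

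Third, for the vanishing statement $m^{\Lambda}_{\lambda,\gamma}=0$ otherwise, I would note that by Howe duality the pairs $(\lambda,\gamma)$ that occur at all are exactly those with $\gamma$ in the $\mathbb{Z}/s\mathbb{Z}$-orbit of $\beta(\lambda)$, and for each such $\gamma$ there is a unique $\sigma$, hence a unique $\Lambda$; any other $\Lambda$ contributes zero. This also shows the multiplicity is one whenever it is nonzero. Finally I should invoke that conformal-embedding branching is multiplicity-preserving from the finite case (this is standard — the branching functions are constant, cf.\ the discussion in \cite{ABI}), so the finite computation suffices and no genuinely affine argument is needed.

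\textbf{Main obstacle.} The conceptual steps are all classical; the real work — and the place where an error is easiest to make — is the modular bookkeeping in the third step: getting the determinant-twist shift $r(\sigma-c(Y(\lambda)))$ and the constant $rs$ in $\delta(\lambda,\sigma)$ exactly right, i.e.\ correctly matching the $\mathrm{GL}$-normalization of $\bigwedge^{k}(\mathbb{C}^{rs})$ against the $\mathrm{SL}(rs)$ fundamental weight labeling and simultaneously against the center actions on both tensor factors. I expect this to be a careful but routine computation with the sequences $\vect a,\vect b$ already set up above, and I would simply cite \cite{ABI} for the precise identity rather than reprove it in detail.
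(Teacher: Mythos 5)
First, note that the paper does not prove this proposition at all: it is quoted verbatim from \cite{ABI}, so there is no internal argument to compare yours against. Judged on its own terms, your outline has one genuine gap, and it sits exactly where you locate the ``standard'' step rather than where you locate the ``main obstacle''. The claim that the affine branching of $\mathcal{H}_{\Lambda_k}$ under a conformal embedding ``is governed entirely by the branching of the underlying finite-dimensional module $V_{\Lambda_k}$'' is false. Finite reducibility of conformal embeddings says the branching functions are constants; it does not say those constants are computed by the degree-zero piece. Skew Howe duality on $\bigwedge^k(\CC^r\otimes\CC^s)$ only produces the components with $\gamma=\lambda^T$, i.e.\ the single value $\sigma\equiv c(Y(\lambda))\bmod s$; the other $s-1$ values of $\sigma$ in the proposition index components whose $\widehat{\mathfrak{sl}}(r)\oplus\widehat{\mathfrak{sl}}(s)$ highest weight vectors sit at \emph{positive} conformal energy in $\mathcal{H}_{\Lambda_k}$ and are invisible in the finite top. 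Concretely, for $r=s=2$ one has
$$\mathcal{H}_{\Lambda_0}(\mathfrak{sl}(4),1)\;\cong\;\mathcal{H}_{0}\otimes\mathcal{H}_{0}\;\oplus\;\mathcal{H}_{2\omega}\otimes\mathcal{H}_{2\omega},$$
where the second summand (predicted by the formula with $\sigma=1$) has its highest weight vector at energy $1$, inside the adjoint representation of $\mathfrak{sl}(4)$, while $\bigwedge^0\CC^4=\CC$ sees only the vacuum component. So a genuinely affine argument is unavoidable: either the character/theta-function computation of \cite{ABI}, or --- the natural upgrade of your step one --- skew Howe duality applied to the free-fermion (semi-infinite wedge) realization of the level-one $\widehat{\mathfrak{sl}}(rs)$-modules, where all values of $\sigma$ appear because the fermionic Fock space contains all the determinant twists at once.

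Two smaller points. Your assertion that ``for each such $\gamma$ there is a unique $\sigma$, hence a unique $\Lambda$'' is also incorrect: the center $\ZZ/s\ZZ$ does not act freely on $P_r(\mathfrak{sl}(s))$ (e.g.\ $\omega$ is fixed for $\mathfrak{sl}(2)$ at level $2$), so a fixed pair $(\lambda,\gamma)$ can occur in several distinct $\Lambda$'s --- indeed $(\omega,\omega)$ occurs in both $\mathcal{H}_{\Lambda_1}$ and $\mathcal{H}_{\Lambda_3}$ for $\mathfrak{sl}(4)_1$. What is true, and what the proposition asserts, is that for fixed $\Lambda$ and $\lambda$ at most one $\sigma$ satisfies $\delta(\lambda,\sigma)\equiv\Lambda\bmod rs$, since $r\sigma\bmod rs$ determines $\sigma\bmod s$. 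Finally, given that the paper simply cites \cite{ABI} here, the pragmatic course is to do the same; if you do want a self-contained proof, the semi-infinite-wedge route above is the one that makes your Howe-duality strategy actually close.
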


\subsection{Rank-level duality of $\mathfrak{sl}(r)$}

Let us fix $n$ distinct points $\vec{p}=(P_1, \cdots, P_n)$ on the projective line $\mathbb{P}^1$. Let $\vec{z}=(z_1,\dots, z_n)$ be the local coordinates of $\vec{p}$. We denote the above data by $\mathfrak{X}$. Consider an $n$-tuple $\vec{\Lambda}=(\Lambda_{i_1}, \cdots, \Lambda_{i_n})$ of level one dominant integral weights of $\mathfrak{sl}(rs)$. Let $\vect{\lambda}=(\lambda_1, \cdots, \lambda_n)$ and $\vec{\gamma}=(\gamma_1,\dots,\gamma_n)$ be such that $\lambda_k, \gamma_k$ appear in the branching of $\Lambda_{i_k}$ for $1\leq k \leq n$. We get a map
\begin{equation*}\label{sd1}
\mathcal{V}_{\vect{\lambda}}(\mathfrak{X}, \mathfrak{sl}(r),s)\otimes \mathcal{V}_{\vect{\gamma}}(\mathfrak{X}, \mathfrak{sl}(s),r)\rightarrow \mathcal{V}_{\vect{\Lambda}}(\mathfrak{X}, \mathfrak{sl}(rs),1),
\end{equation*}
If $rs$ divides $\sum_{k=1}^n{i_k}$, it is well known that $\dim_{\CC}\mathcal{V}_{\vect{\Lambda}}(\mathfrak{X}, \mathfrak{sl}(rs),1)=1$. We get the following morphism well defined up to scalars:
$$\Psi:\mathcal{V}_{\vect{\lambda}}(\mathfrak{X}, \mathfrak{sl}(r),s) \rightarrow \mathcal{V}^{\dagger}_{\vect{\gamma}}(\mathfrak{X}, \mathfrak{sl}(s),r).$$ 

The rest of the section is devoted to the proof that $\Psi$ is an isomorphism. Without loss of generality we can assume that $\sigma_i-c(Y(\lambda_i))$ is non-negative for all $i$. Let $Q_1$ be a new point distinct from $P_1, \dots, P_n$ on $\mathbb{P}^1$ and $\eta_1$ be the new coordinate. Let $\widetilde{\mathfrak{X}}$ be the data associated to the points $P_1,\dots, P_n, Q_1$ on $\mathbb{P}^1$. We have the following proposition:
\begin{prop}
The following are equivalent:
\begin{enumerate}
\item The rank-level duality map 
$$\mathcal{V}_{\vect{\lambda}}({\mathfrak{X}}, \mathfrak{sl}(r),s) \rightarrow  \mathcal{V}^{\dagger}_{\vect{\gamma}}({\mathfrak{X}}, \mathfrak{sl}(s),r),$$ is an isomorphism.
\item The rank-level duality map $$\mathcal{V}_{\vect{\lambda}\cup{0}}(\widetilde{\mathfrak{X}}, \mathfrak{sl}(r),s) \rightarrow \mathcal{V}^{\dagger}_{\vect{\gamma}\cup{0}}(\widetilde{\mathfrak{X}}, \mathfrak{sl}(s),r),$$ is an isomorphism.
\item The rank-level duality map 
$$\mathcal{V}_{\vect{\lambda}\cup{0}}(\widetilde{\mathfrak{X}}, \mathfrak{sl}(r),s) \rightarrow  \mathcal{V}^{\dagger}_{\vect{\lambda}^T\cup\beta}(\widetilde{\mathfrak{X}}, \mathfrak{sl}(s),r),$$ is an isomorphism, 
where $\beta=r\omega_{\sigma}$, $\sigma=\sum_{i=1}^n(\sigma_i-c(Y(\lambda_i)))\operatorname{mod}(s)$ and $\omega_{\sigma}$ is the $\sigma$-th fundamental weight.
\end{enumerate}
\end{prop}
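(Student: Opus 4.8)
The plan is to prove the three statements equivalent by establishing $(1)\Leftrightarrow(2)$ and $(2)\Leftrightarrow(3)$ separately, since $(2)$ is the natural intermediate object.

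\textbf{Step 1: $(1)\Leftrightarrow(2)$ (propagation of vacua).} I would invoke the standard propagation-of-vacua isomorphism for conformal blocks: adjoining an extra marked point $Q_1$ carrying the trivial weight $0$ does not change the space of (co)vacua. Concretely, there are canonical isomorphisms $\mathcal{V}_{\vect{\lambda}\cup 0}(\widetilde{\mathfrak{X}},\mathfrak{sl}(r),s)\cong\mathcal{V}_{\vect{\lambda}}(\mathfrak{X},\mathfrak{sl}(r),s)$ and likewise for $\mathfrak{sl}(s)$ and for the level-one $\mathfrak{sl}(rs)$ block (here one uses that $\beta$ at the $0$-weight vertices corresponds, after branching, to the trivial level-one weight of $\mathfrak{sl}(rs)$, so the ambient one-dimensional block is unchanged). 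Under these identifications the rank-level duality map for $\widetilde{\mathfrak{X}}$ is carried to the one for $\mathfrak{X}$ — this compatibility is exactly the functoriality of $\Psi$ under propagation, which follows from tracing through the definition of $\Psi$ via the branching $\mathcal{H}_{\lambda_i}\otimes\mathcal{H}_{\gamma_i}\hookrightarrow\mathcal{H}_{\Lambda_{i_k}}$ together with the fact that the vacuum vector maps to the vacuum vector. Hence $(1)$ holds iff $(2)$ holds.

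\textbf{Step 2: $(2)\Leftrightarrow(3)$ (applying the shift automorphism at $Q_1$).} This is where Corollary \ref{ext} does the work. I would apply the multi-shift / single-shift machinery of Section \ref{conjugation} and Proposition \ref{fs} at the single new point $Q_1$: choose the shift datum so that the diagram automorphism acts on the $\mathfrak{sl}(s)$-weight $\beta(\lambda)$ at $Q_1$ and, by Lemma (the one computing $\omega(\lambda^*)=\lambda^T$-type statements) together with Proposition \ref{branching1}, converts the tuple $\vect{\gamma}\cup\{0\}$ into $\vect{\lambda}^T\cup\{\beta\}$ with $\beta=r\omega_\sigma$, $\sigma=\sum_i(\sigma_i-c(Y(\lambda_i)))\bmod s$; simultaneously it fixes the $\mathfrak{sl}(r)$ side (the weight $0$ at $Q_1$ is sent to $0$ since the relevant image coweight lies in the coweight lattice and acts trivially on the trivial weight), so $\vect{\lambda}\cup\{0\}$ is unchanged. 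The level-one $\mathfrak{sl}(rs)$ ambient block is one-dimensional throughout, so nondegeneracy is preserved. By Corollary \ref{ext}, the rank-level duality map in $(2)$ is nondegenerate iff the one in $(3)$ is; since all these blocks have equal (Verlinde) dimension and the ambient space is one-dimensional, nondegeneracy is equivalent to being an isomorphism. This gives $(2)\Leftrightarrow(3)$.

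\textbf{Main obstacle.} The routine parts are the two propagation isomorphisms and the bookkeeping of Young-diagram combinatorics in Proposition \ref{branching1}. The genuinely delicate step is verifying that the shift automorphism applied at $Q_1$ does exactly what is claimed on \emph{both} factors at once — i.e. that the chosen $\vec\mu\in\Gamma_{n+1}^{\mathfrak{sl}(s)}$ (or the equivalent single shift supported at $Q_1$) simultaneously (a) leaves the $\mathfrak{sl}(r)$-side tuple literally equal to $\vect{\lambda}\cup\{0\}$, not merely isomorphic to it in a way that might rescale $\Psi$, and (b) produces precisely the weight $\beta=r\omega_\sigma$ with the stated value of $\sigma$, matching the $\delta(\lambda,\sigma)$ congruence in Proposition \ref{branching1} so that the ambient $\mathfrak{sl}(rs)$ weight at $Q_1$ stays at the trivial weight. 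Pinning down this compatibility — essentially that the image $\Omega_i$ of the $\mathfrak{sl}(s)$ diagram automorphism under $\phi:\mathfrak{sl}(r)\oplus\mathfrak{sl}(s)\to\mathfrak{sl}(rs)$ is the correct level-one diagram automorphism of $\widehat{\mathfrak{sl}}(rs)$ and acts trivially on $\Lambda_0$ — is the crux, and I would isolate it as a lemma before assembling the equivalence.
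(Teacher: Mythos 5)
Your proposal is correct and follows essentially the same route as the paper: $(1)\Leftrightarrow(2)$ via compatibility of propagation of vacua with the rank-level duality map, and $(2)\Leftrightarrow(3)$ via Corollary \ref{ext} applied with a shift supported at the new point $Q_1$. The "crux" you isolate — that the shift fixes the $\mathfrak{sl}(r)$-factor and produces the correct weight $\beta$ on the $\mathfrak{sl}(s)$-factor — is exactly what Theorem \ref{extension1} (via Proposition \ref{key}) and the branching combinatorics of Proposition \ref{branching1} are set up to deliver, so your extra care there is consistent with, not divergent from, the paper's argument.
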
\label{right}
\begin{proof}
The equivalence of $(1)$ and $(2)$ follows from the compatibility of propagation of vacua (see ~\cite{TUY}) with rank-level duality. The equivalence of $(2)$ and $(3)$ follows directly from Corollary 2.
 
\end{proof}

The proof that $\Psi$ is non-degenerate follows from the following (see Theorem 4.10 ~\cite{R}):
\begin{prop}The following rank-level duality map is an isomorphism
$$\mathcal{V}_{\vect{\lambda}\cup{0}}(\widetilde{\mathfrak{X}}, \mathfrak{sl}(r),s) \rightarrow \mathcal{V}^{\dagger}_{\vect{\lambda}^T\cup\beta}(\widetilde{\mathfrak{X}}, \mathfrak{sl}(s),r),$$ 
where $\beta=r\omega_{\sigma}$, $\sigma=\sum_{i=1}^n(\sigma_i-c(Y(\lambda_i)))\operatorname{mod}(s)$ and $\omega_{\sigma}$ is the $\sigma$-th fundamental weight. 
\end{prop}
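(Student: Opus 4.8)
The plan is to deduce this statement directly from R.~Oudompheng's parabolic strange duality, specifically Theorem~4.10 of~\cite{R}, after matching the combinatorial data. First I would recall the precise shape of that theorem: it concerns $\mathbb{P}^1$ with $n+1$ marked points equipped with local coordinates, parabolic weights at those points encoded by Young diagrams in $\mathcal{Y}_{r-1,s}$ (equivalently level-$s$ dominant weights of $\mathfrak{sl}(r)$), and it produces a rank-level duality pairing with the conformal blocks of $\mathfrak{sl}(s)$ at level $r$ whose weights are the modified transposes of the given Young diagrams at the same points; under an explicit numerical (congruence) condition on the weights this pairing is perfect. Our configuration $\widetilde{\mathfrak{X}}$ is exactly of this type: the points $P_1,\dots,P_n$ carry $\lambda_1,\dots,\lambda_n$ and the auxiliary point $Q_1$ carries the zero weight on the $\mathfrak{sl}(r)$-side.

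Next I would set up the dictionary between the two formulations. At each original point the prescription ``$\lambda_i \leftrightarrow \lambda_i^T$'' is literally the transpose rule appearing in~\cite{R}. The only point needing care is $Q_1$: its weight on the $\mathfrak{sl}(r)$-side is $0$, whose Young diagram is empty, yet the dual weight on the $\mathfrak{sl}(s)$-side is not $0$ but $\beta = r\omega_\sigma$. This is forced: by Proposition~\ref{branching1} the component $\mathcal{H}_{\lambda_i}\otimes \mathcal{H}_{\gamma_i}$ occurs in $\mathcal{H}_{\Lambda_{i_k}}$ only when $\gamma_i = \sigma_i\beta(\lambda_i)$ and $\Lambda$ matches $\delta(\lambda_i,\sigma_i)\bmod(rs)$, while the level-one weight on $\mathbb{P}^1$ must satisfy $rs \mid \sum_k i_k$; tracing through these congruences shows that after replacing each $\gamma_i$ by $\lambda_i^T$ the total defect $\sum_i(\sigma_i - c(Y(\lambda_i)))$ must be absorbed at $Q_1$, producing exactly $\beta = r\omega_{\sigma}$ with $\sigma$ its residue mod $s$. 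Hence the weight data $(\vect{\lambda}\cup 0,\ \vect{\lambda}^T\cup\beta)$ is precisely the admissible pair to which Theorem~4.10 of~\cite{R} applies, and the reduction ``$\sigma_i - c(Y(\lambda_i))\ge 0$'' made in the text keeps us inside the allowed ranges.

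Having matched the hypotheses, I would simply invoke~\cite{R}, Theorem~4.10, to conclude that the displayed map is an isomorphism. As an independent check one may note that the two sides have equal dimension: by the Verlinde formula the dimensions are level-rank symmetric for $\mathfrak{sl}$ in genus zero, so it would in fact suffice to establish injectivity (or surjectivity) of the pairing, which is exactly the geometric input of~\cite{R} coming from the theta/strange-duality map on moduli of quasi-parabolic bundles on $\mathbb{P}^1$.

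The main obstacle is not conceptual but bookkeeping: one must reconcile the algebro-geometric normalizations of~\cite{R} (parabolic weights, the choice of the determinant/level-one line bundle, and whether one works with $\mathrm{SL}$- or $\mathrm{GL}$-parabolic bundles) with the WZW/conformal-block conventions used here, and in particular pin down the auxiliary weight $\beta$ exactly --- its normalization $r\omega_\sigma$ and the reduction of $\sigma$ modulo $s$. Once the identification of $\beta$ with the weight dictated by~\cite{R}'s congruence condition is verified, the rest is a direct citation.
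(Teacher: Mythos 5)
Your proposal matches the paper's treatment: the paper offers no independent argument for this proposition but states it as a direct quotation of Theorem 4.10 of Oudompheng's work \cite{R}, exactly as you do. Your additional bookkeeping about matching the parabolic-weight conventions and identifying the auxiliary weight $\beta=r\omega_{\sigma}$ at $Q_1$ is a reasonable elaboration of that citation, not a different route.
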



\begin{thebibliography}{10}
\bibitem[1]{A} T. Abe, {\em Strange duality for parabolic symplectic bundles on a pointed projective line}, Int. Math. Res. Not. IMRN {\bf 2008}, Art. ID rnn121, 47 pp.
\bibitem[2]{ABI}D. Altschuer, M. Bauer, C. Itzykson, {\em The branching rules of conformal embeddings}, Comm. Math. Phys. 132 (1990), no. 2, 349-364. 
\bibitem[3] {BB} A. Bais, P. Bouwknegt, {\em A classification of subgroup truncations of the bosonic string}, Nuclear Phys. B 279(1987), no. 3-4, 561-70.
\bibitem[5]{FS} J. Fuchs, C. Schweigert, {\em The Action of Outer Automorphisms on Bundles of Chiral Blocks}, Comm. Math. Phys. 206 (1999), no. 3, 691-736.
\bibitem[6]{GGO}N. Gorman, W. McGlinn and O'Raifeartaigh,{ \em Cartan preserving automorphisms of untwisted
and twisted Kac-Moody algebras}, J. Math. Phys. no. 30, (1989) 1921-1932.
\bibitem[7]{HF}J. K. Freericks and M. B. Halpern, {\em Conformal deformation by the currents of affine g}, Ann. Phys. 188 (1988), no. 2, 258-306 [ibid. 190 (1989) 212, Erratum].
\bibitem[8]{H} K. Hasegawa, {\em Spin module versions of Weyl's reciprocity theorem for classical Kac-Moody
Lie algebras $-$An application to branching rule duality}, Publ. Res. Inst. Math. Sci. 25
(1989), no. 5, 741–828.
\bibitem[9]{K} V. Kac, {\em Infinite dimensional Lie algebras}, Third edition, Cambridge University Press, Cambridge(UK), 1990. xxii+400 pp.
\bibitem[10]{VLW}W. Lerche, C. Vafa, and N. P. Warner,{ \em Chiral rings in N =2 superconformal theories}, Nuclear Phys. B 324 (1989), no. 2, 427-474.
\bibitem[11]{M}S. Mukhopadhyay, {\em Rank-level duality of Conformal Blocks for odd orthogonal Lie algebras in genus 0}, arXiv:1211.2204.

\bibitem[12]{M1}S. Mukhopadhyay, {\em Rank-level duality of conformal blocks for $SL_n$ in genus $0$ revisited}, preliminary version.

\bibitem[13]{NT} T. Nakanishi, A. Tsuchiya,{ \em Level-rank duality of WZW models in conformal field theory}, Comm. Math. Phys. Volume 144 (1992), no. 2, 351-372.
\bibitem[14]{R} R. Oudompheng, {\em Rank-level duality for conformal blocks of the linear group }, J. Algebraic Geom. 20 (2011), no. 3, 559-597.
\bibitem[15]{Ser} J-P. Serre, {\em Complex semisimple Lie algebras}, Springer Monographs in Mathematics, Springer-Verlag, Berlin, 2001. x+74 pp.
\bibitem[16]{TUY} A. Tsuchiya, K. Ueno, Y. Yamada, {\em Conformal field theory on universal family of stable curves with
gauge symmetries}, Integrable systems in quantum field theory and statistical mechanics, 459–566,
Adv. Stud. Pure Math. 19, Academic Press, Boston, MA, 1989. 
\bibitem[17]{V} D. Verstegen, {\em Conformal embeddings, rank-level duality and exceptional modular invariants}, Comm. Math. Phys. 137 (1991), no. 3, 567–586.
 \end{thebibliography}
\end{document}